\numberwithin{equation}{section}
\newfont{\cyr}{wncyr10 scaled 1100}
\newfont{\cyrr}{wncyr9 scaled 1000}
\theoremstyle{plain}
\newtheorem{theorem}{Theorem}[section]
\newtheorem{proposition}[theorem]{Proposition}
\newtheorem{lemma}[theorem]{Lemma}
\newtheorem{corollary}[theorem]{Corollary}
\theoremstyle{definition}
\newtheorem{definition}[theorem]{Definition}
\theoremstyle{remark}
\newtheorem{remark}[theorem]{Remark}
\newcommand{\Q}{\mathbb{Q}}
\newcommand{\Z}{\mathbb{Z}}
\newcommand{\R}{\mathbb{R}}
\newcommand{\C}{\mathbb{C}}
\newcommand{\F}{\mathbb{F}}
\newcommand{\G}{\mathbb{G}}
\newcommand{\A}{\mathbb{A}}
\newcommand{\I}{\mathbb{I}}
\newcommand{\J}{\mathbb{J}}
\newcommand{\X}{\mathbb{X}}
\newcommand{\defeq}{\vcentcolon=}
\newcommand{\pwseries}[1]{[[ #1]]}
\newcommand{\p}{\mathfrak{p}}
\DeclareMathOperator{\Spec}{Spec}
\DeclareMathOperator{\Pic}{Pic}
\DeclareMathOperator{\End}{End}
\DeclareMathOperator{\Norm}{Norm}
\DeclareMathOperator{\Hom}{Hom}
\DeclareMathOperator{\Gal}{Gal}
\DeclareMathOperator{\GL}{GL}
\DeclareMathOperator{\SL}{SL}
\DeclareMathOperator{\Sel}{Sel}
\DeclareMathOperator{\M}{M}
\DeclareMathOperator{\im}{im}
\DeclareMathOperator{\Ta}{Ta}
\DeclareMathOperator{\Sym}{Sym}
\newcommand{\val}{\mathrm{val}}
\newcommand{\cyc}{{\mathrm{cyc}}}
\newcommand{\tr}{\mathrm{tr}}
\newcommand{\ord}{\mathrm{ord}}
\newcommand{\new}{\mathrm{new}}
\newcommand{\wild}{\mathrm{wild}}
\newcommand{\univ}{\mathrm{univ}}
\newcommand{\rec}{\mathrm{rec}}
\newcommand{\unr}{\mathrm{unr}}
\newcommand{\ES}{\mathrm{ES}}
\newcommand{\reg}{\mathrm{reg}}
\newcommand{\Katz}{\mathrm{Katz}}
\newcommand{\ST}{\mathrm{ST}}
\newcommand{\Cot}{\mathrm{Cot}}
\newcommand{\Oh}{\mathrm{Oh}}
\newcommand{\et}{\mathrm{\acute{e}t}}
\definecolor{Indigo}{rgb}{0.2,0.1,0.7}
\definecolor{Violet}{rgb}{0.5,0.1,0.7}
\definecolor{White}{rgb}{1,1,1}
\definecolor{Green}{rgb}{0.1,0.9,0.2}
\newcommand{\longmono}{\mbox{\;$\lhook\joinrel\longrightarrow$\;}}
\newcommand{\longepi}{\mbox{\;$\relbar\joinrel\twoheadrightarrow$\;}}
\newcommand{\mat}[4]{\left(\begin{array}{cc}#1&#2\\#3&#4\end{array}\right)}
\newcommand{\smallmat}[4]{\bigl(\begin{smallmatrix}#1&#2\\#3&#4\end{smallmatrix}\bigr)}
\newcommand{\dirlim}{\mathop{\varinjlim}\limits}
\newcommand{\invlim}{\mathop{\varprojlim}\limits}
\begin{document}

\include{thebibliography}

\title[On quaternionic ordinary families of modular forms and $p$-adic $L$-functions]{On quaternionic ordinary families of modular forms and $p$-adic $L$-functions}
\today
\date{}
\author{M. Longo, P. Magrone, E. R. Walchek}

\thanks{}

\begin{abstract} We use Serre--Tate expansions of modular forms to construct power series attached to quaternionic ordinary families of modular forms. We associate to these power series a big $p$-adic $L$-function interpolating the $p$-adic $L$-functions constructed by Burungale and Magrone at classical specializations. A crucial ingredient is the generalization of some results of Ohta to the quaternionic setting. 
\end{abstract}

\address{Dipartimento di Matematica, Universit\`a di Padova, Via Trieste 63, 35121 Padova, Italy}
\email{matteo.longo@unipd.it}

\subjclass[2010]{}

\keywords{quaternionic modular forms, Serre--Tate expansion, $p$-adic $L$-functions}

\maketitle


\section{Introduction}\label{Intro}

The idea of using Serre--Tate expansions to attach power series to quaternionic modular forms goes back to \cite{Mori2, Mori, Mori1} and recently has been fruitfully exploited, also in the $\GL_2$-case, to construct $p$-adic $L$-functions associated to quaternionic modular forms; see \cite{Brako, CH}, for the $\GL_2$-case and \cite{Mori2, Brooks, Burungale, Magrone}, for the quaternionic division algebra case. For a family of $p$-adic modular forms on $\GL_2$, Serre--Tate expansions have been combined in \cite{Castella} with the Hida--Ohta theory of ordinary forms to obtain a big $p$-adic $L$-function that interpolates at classical specializations the $p$-adic $L$-function constructed in \cite{CH}. 

The goal of this paper is to develop a theory of big $p$-adic $L$-functions for indefinite rational quaternion algebras, similar to that developed in \cite{Castella} over $\GL_2$. To fix the notation, let $p$ be a prime number, $N$ be a positive integer not divisible by $p$, and $\I$ be a primitive branch of the Hida family attached to a residual representation which is absolutely irreducible and $p$-distinguished; this is a $p$-adic family of modular forms on $\GL_2$ (see \S\ref{PBHF} for details on the terminology). We also assume a Gorensteiness assumption on $\I$ (\emph{cf.} \S\ref{PBHF}, or 
\cite[Assumption 6.2]{LV-MM}).

Let $K/\Q$ be an imaginary quadratic extension of discriminant $-D_K$ prime to $N$ in which $p$ splits. Assume that $N=MD$ is a factorization of $N$ into coprime integers $M$ and $D$, with $D$ a square-free product of an even number of prime factors which are inert in $K$, and $M\geq 4$ only divisible by primes that are split in $K$.
Also, let $\widetilde{\I}=\I\otimes\Z_p^\unr$, where $\Z_p^\unr=W(\overline{\F}_p)$ is the ring of Witt vectors of a separable algebraic closure of the field with $p$ elements.  
The goal is to construct a big $p$-adic $L$-function $\mathscr{L}_{\I,\boldsymbol{\xi}}\in \widetilde{\I}[[{\Gamma_\infty}]]$ attached to $\I$ and a $p$-adic family of Hecke characters $\boldsymbol{\xi}$ such that when $\kappa\in\Spec(\I)$ is an arithmetic morphism of even weight $k\geq 2$ with $k\equiv 2\pmod{2(p-1)}$ the value $\mathscr{L}_{\I,\boldsymbol{\xi}}(\kappa)$ of the $\kappa$-specialization of $\mathscr{L}_{\I,\boldsymbol{\xi}}$ coincides (up to explicit factors) with the $p$-adic $L$-function $\mathscr{L}_{f_\kappa,\xi_\kappa}$ attached to the \emph{normalized} Jacquet--Langlands lift $f_\kappa$ of the $\kappa$-specialization of the given Hida family, and the $\kappa$-specialization $\xi_\kappa$ of $\boldsymbol{\xi}$. Writing $\mathscr{L}_{\I,\boldsymbol{\xi}}(\kappa)$ for $\kappa(\mathscr{L}_{\I,\boldsymbol{\xi}})$, our main result, Theorem \ref{mainthh}, then says that for $\kappa$ as before, we have 
 \[(\mathscr{L}_{\I,\boldsymbol{\xi}}(\kappa))=(\mathscr{L}_{f_\kappa,\xi_\kappa})\]
 as ideals in $\widetilde{\I}[[\Gamma_\infty]]$. 

Much of the work consists in clarifying the meaning of the previous sentences, in which we have at least one crucial difficulty, namely, the correct definition of the \emph{normalized} Jacquet--Langlands lift to a quaternion algebra of a given elliptic modular form. 
We do not develop this theme in full generality, 
but we use an approach which combines Serre--Tate expansions (which is the basic tool to actually define $\mathscr{L}_{f,\xi}$ for a quaternionic modular form $f$ and an appropriate Hecke character $\xi$ in
\cite{Burungale, Magrone, Mori1}), and an extension to the quaternionic case of Eichler--Shimura theory for ordinary families of modular forms developed by Ohta in a series of papers \cite{OhtaJ, Ohta-ES, OhtaC, OhtaMA}. In the $\GL_2$-case, one can normalize an eigenform by imposing that the first coefficient of its Fourier expansion is equal to $1$; more generally, one can define a notion of \emph{integral} modular forms by requiring that the Fourier coefficients belong to some fixed subring of $\C_p$. Adopting this viewpoint, and making use of the Atkin--Lehner involution, Ohta defines modules of projective limits of ordinary modular forms, which are identified with Wiles $\Lambda$-adic modular forms. 
We exploit this approach in the quaternionic case replacing Fourier expansions with Serre--Tate expansions. We thus obtain an integral theory of modular forms, and inverse limits of such, giving rise to a $\widetilde{\Lambda}_\mathcal{O}$-module denoted $\mathfrak{S}_\infty^*$ in the paper; here $\mathcal{O}$ is a subring of $\overline{\Q}_p$ containing $\Z_p^\unr$, $\Lambda_\mathcal{O}=\mathcal{O}[[1+p\Z_p]]$ and $\widetilde{\Lambda}_\mathcal{O}=\Lambda_\mathcal{O}\times\mathcal{O}[(\Z/N\Z)^\times]$.
We are able to attach to each element of $\mathfrak{S}_\infty^*$ a power series in $\Lambda_\mathcal{O}[[T_x]]$ following an approach of Ohta, where $T_x$ is the Serre--Tate parameter around a point $x$ in the Igusa tower over the special fiber of a Shimura curve $X$ of tame level $M$ attached to the rational quaternion algebra of discriminant $D$. This power series gives rise to the sought-for measure $\mathscr{L}_{\I,\boldsymbol{\xi}}$. The reader may notice that
working over $\Z_p^\unr$ is necessary, as in Ohta's work, since $\C_p$-periods of $p$-adic Hodge--Tate theory are 
defined over $\Z_p^\unr$ due to the ordinarity assumption; we also note that $\Z_p^\unr$ 
is the ring where Serre--Tate deformation theory
of ordinary abelian varieties is defined. The main result, as mentioned above, is Theorem \ref{mainthh}, to which we refer the reader for details.

The construction of similar $p$-adic families of $L$-functions can also be obtained by other methods, as in \cite{JLZ, LRZ, Seveso}. However, we would like to point out that our approach, even if restricted to the ordinary case, builds directly on weight $2$ specializations of a given $p$-adic family of modular forms, following the original approach of Ohta and Hida. This framework may be viewed as complementary to the methods of \cite{JLZ, LRZ, Seveso}, which avoid a direct comparison with weight $2$ specializations. The benefit of our more direct approach is that it may lead to a direct comparison with big Heegner points introduced in \cite{LV-MM} by interpolating Heegner points in weight $2$, thus proving a quaternionic analogue of \cite{Castella-MathAnn, Castella}. 
The ultimate goal of our construction of $\mathscr{L}_{\I,\boldsymbol{\xi}}$
is the study of the Greenberg Selmer group $\Sel(K,\mathbf{T})$ of the
Hida big Galois representation $\mathbf{T}$ attached to $\I$ over a quadratic imaginary field $K$ as above.
Although the big family of Galois representations $\mathbf{T}$ can also be obtained by taking inverse limits of Jacobins of modular curves (by taking $D=1$), the construction using the 
inverse limit of Tate modules of Jacobians of Shimura curves attached to \emph{indefinite} quaternion algebras of discriminant $D>1$ (as in 
Section \ref{sec.ESisom}, especially \eqref{defT}) allows for the use of Heegner points (or, in general weight, Heegner cycles)  attached to towers of Shimura curves associated to quaternion algebras of discriminant $D$, which are required for one of the most useful approaches to the study of arithmetic properties of 
$\Sel(K,\mathbf{T})$. To the best of the authors knowledge, Heegner points attached to towers of modular curves are not suitable to the study of $\Sel(K,\mathbf{T})$ in the case $D>1$, for which Heegner points on Shimura curves attached to quaternion algebras of discriminant $D$ seem to be necessary: 
for similar approaches (and further discussions) in the case of Selmer groups 
$\Sel(K,T)$ of $p$-adic representations $T$ attached to modular forms of even weight $k\geq 2$  and level $\Gamma_0(N)$, see for example \cite{NekCM}, \cite{NekLR}, \cite{Longo-AIF}.

We fall short of an analogue of Wiles theory of $\Lambda$-adic modular forms in this context, and more generally of developing a full analogue of Ohta's theory to obtain a quaternionic counterpart of \cite[Theorem 2.3.6]{Ohta-ES}. The reason is the lack of a clean connection between Hecke actions on modular forms and their $T_x$-expansions. Even if explicit descriptions of Hecke operators are available (see especially \cite[\S4.4]{Brooks}), some other inputs are missing; specifically, we cannot prove a full control theorem for our modules $\mathfrak{S}_\infty^*$ (as in \cite[\S7.3, Theorem 6]{hida-elementary} and \cite[Theorem 2.5.3]{Ohta-ES}) which does not allow for showing the freeness of the components of $\mathfrak{S}_\infty^*$ over $\Lambda_\mathcal{O}$, or that $\mathfrak{S}_\infty^*$ is a $\Lambda_\mathcal{O}$-module of finite rank; some of these obstructions are listed in the paper, see for example Remarks \ref{remohta}, \ref{remwiles} and \ref{remiso}. We believe that it would be interesting to develop a more complete Hida theory of quaternionic modular forms using Serre--Tate expansions, but at the moment it seems quite difficult to make progress in this direction. 

We finally remark that the extension of Ohta's theorem to the quaternionic case has been obtained by means of perfectoid techniques in the paper \cite{CHJ} by Chojecki--Hansen--Johansson (see also \cite{BHW} in the Hilbert case). These results work very generally for finite slope families, but it seems difficult to adapt them to deal with integrality conditions as done in this paper. Furthermore, Ohta's theory has also been extended to more general Shimura varieties (including the Hilbert and Siegel cases) in the recent preprint by Sangiovanni Vincentelli--Skinner \cite{SSV}. 

We fix throughout the text embeddings $\iota_\infty:\overline{\Q}\hookrightarrow\C$ and $\iota_p:\overline{\Q}\hookrightarrow\overline{\Q}_p$.

\subsubsection*{Acknowledgements}
{The authors would like to  heartly thank the referees for the careful reading of the paper and for many helpful
comments that lead to improve the clarity of our manuscript. The authors would like to thank F. Castella, D. Loeffler, A. Mori and S. Vigni for enlightening discussions. M.L. is supported by PRIN 2022 and INDAM GNSAGA; E.R.W. is supported by PEDECIBA.} 
 
\section{Shimura curves}
The goal of this section is to collect the needed results on quaternionic multiplication abelian surfaces, and see Shimura curves as solutions of moduli problems involving these objects. This will make it possible to introduce integral models of Shimura curves. Basic references are \cite{Buz, Carayol-Shimura, Brooks, Mori1}. 
 
\subsection{Quaternion algebras}\label{subsec-idempotents}
Fix an integer $M\geq 4$ and a square-free integer $D>1$ coprime to $M$ and which is the product of an even number of distinct primes. 
Fix also an odd prime number $p\nmid MD$. 
Let $B$ be the quaternion algebra over $\Q$ of discriminant $D$. For each place $v$ of $\Q$, let $\Q_v$ denote the completion of $\Q$ at $v$ and set
$B_v=B\otimes_{\Q}\Q_v$; in particular, since $D$ is a product of an even number of distinct primes, 
$B_\infty=B\otimes_\Q\R\simeq\M_2(\R)$ (\emph{i.e.} $B$ is an \emph{indefinite} quaternion algebra) and for each prime number $\ell\nmid D$ we have $B_\ell\simeq \M_2(\Q_\ell)$ (here for a ring $R$ we denote $\M_2(R)$ the $R$-algebra of $2\times 2$ matrices with entries in $R$). Fix a maximal order $\mathcal{O}_B$ of $B$ and fix
isomorphisms $i_\infty\colon B_\infty\simeq\M_2(\R)$ 
and $i_\ell\colon B_\ell\simeq\M_2(\Q_\ell)$ for all primes $\ell\mid Mp$
such that $i_\ell(\mathcal{O}_B\otimes_\Z\Z_\ell)= \M_2(\Z_\ell)$. 
For each integer $m\geq0$, the isomorphisms $i_\ell$ 
define Eichler orders $R_m$ of level $Mp^m$ such that 
$R_m\subseteq R_{m-1}$ for all $m\geq 1$. 
Let $U_m=\widehat{R}^\times_{m}=(R_{m}\otimes_\Z\widehat\Z)^\times$, where $\widehat{\Z}$ is the profinite completion of $\Z$. In particular, for each divisor $d$ of $Mp^m$ and each integer $m\geq 0$, we obtain isomorphisms \[i_d\colon \mathcal{O}_B\otimes_\Z(\Z/d\Z)\cong \M_2(\Z/d\Z).\]
Let $t\in\mathcal{O}_B$ be such that $t^2=-D<0$, which exists because $B$ splits over $\Q(\sqrt{-D})$, and define the involution $\dagger$ given by $b^\dagger\defeq t^{-1}\bar{b}t$, where $b\mapsto \bar{b}$ denotes the main involution on $B$; then $t\mapsto t^\dagger$ is a \emph{positive} involution (\emph{i.e.} 
$\mathrm{tr}(xx^\dagger)>0$ for all $x\in B$, where $\mathrm{tr}(a)=a+\bar{a}$, see \cite[Theorem 1.1]{Mori} for details). Fix finally the idempotent $e\in\mathcal{O}_{B,p}=\mathcal{O}_B\otimes_\Z\Z_p$
satisfying $i_p(e)=\smallmat 1000$; sometimes we simply 
write $e$ for $i_p(e)$.

\begin{remark}
Following \cite[(1.10)]{Mori} (see also \cite[Proposition 1.4]{Mori1}, \cite[\S2.3]{Burungale}, 
\cite[\S2.1]{Brooks}, \cite[\S2.1]{Longo-ShimuraMaass}) the idempotent $e$
can be chosen by fixing an \emph{Hashimoto model} for $B$. 
\end{remark}

\subsection{QM abelian surfaces}\label{sec-QMAS}
We introduce a class of abelian surfaces which plays a central role in the theory of Shimura curves. 

\begin{definition} 
Let $S$ be a scheme. A \emph{quaternionic multiplication (QM) abelian surface} $(A,\iota)$ over $S$ is an abelian scheme $A\rightarrow S$ of relative dimension $2$ equipped with an injective algebra homomorphism $\iota\colon\mathcal{O}_B\hookrightarrow\End_S(A)$. An \emph{isogeny} (resp. an \emph{isomorphism}) of QM abelian surfaces is an isogeny (resp. an isomorphism) of abelian schemes commuting with the $\mathcal{O}_B$-action. 
\end{definition}

Each QM abelian surface over $S$ can be equipped with a unique principal polarization $\theta_A$ such that for each geometric point $s$ of $S$ the corresponding Rosati involution of $\End(A_s)$, where $A_s$ is the fiber of $A\rightarrow S$ at $s$, coincides with the involution $x\mapsto x^\dagger$ on $\mathcal{O}_B$ (\cite[Lemma 1.1]{Milne}; see also \cite[Lemma 5]{DT}, \cite[\S1]{Buz}). If $\pi\colon A_1\rightarrow A_2$ is an isogeny, taking the dual isogeny $\pi^\vee\colon A_2^\vee\rightarrow A_1^\vee$ and composing with the principal polarizations of $A_1$ and $A_2$ gives an isogeny, still denoted  $\pi^\vee\colon A_2\rightarrow A_1$. We say that $\pi$ has \emph{degree $d$} if the composition of $\pi^\vee\circ\pi$ is, locally on $A_1$, the multiplication by a unique integer $d$.

\subsection{Shimura curves with na\"{\i}ve level structures} \label{shimura section} 
Given a group $G$ and a scheme $S$, we write $G_S$ for the constant group scheme of value $G$ over $S$; when the context is clear, we often simplify the notation and write $G$ for $G_S$. 

\begin{definition}
Let $m\geq 0$ be an integer, $d\mid Mp^m$ be a positive integer and $(A,\iota)$ a QM abelian surface over a $\Z[1/d]$-scheme $S$. A \emph{na\"{\i}ve full level $d$ structure} on $A$ is an isomorphism \[\alpha\colon\mathcal{O}_B\otimes_\Z(\Z/{d}\Z)\overset\sim\longrightarrow A[{d}]\] of $S$-group schemes locally for the étale topology of $S$ which commutes with the left actions of $\mathcal{O}_B$ given by $\iota$ on $A[d]$, and the multiplication from the left of $\mathcal{O}_B$ on the constant $S$-group scheme $\mathcal{O}_B\otimes_\Z(\Z/{d}\Z)$. 
\end{definition}

\begin{remark}
A full level ${d}$ structure is equivalent via $i_d$ to an isomorphism $\M_2(\Z/d\Z)\overset\sim\rightarrow A[d]$ of finite flat group schemes over $S$, which commutes with the left action of $\mathcal{O}_B$ given by $\iota$ on $A[d]$ and by left matrix multiplication on $\M_2(\Z/d\Z)$. Also note that, if $k$ is an algebraically closed field, giving a full level $d$ structure on a QM abelian surface defined over $S=\Spec(k)$ is equivalent to fixing a $\Z/d\Z$-basis of the group $A[d](k)$.
\end{remark} 

The group $(\mathcal{O}_B\otimes_\Z(\Z/d\Z))^\times$ acts from the left on the set of full level $d$ structures on a QM abelian surface $(A,\iota)$ as follows. If $g\in(\mathcal{O}_B\otimes_\Z(\Z/d\Z))^\times$, then right multiplication $r_g(x)=xg$ by $g$ defines an isomorphism of 
$\mathcal{O}_B\otimes_\Z(\Z/d\Z)$ which commutes with the left action of $\mathcal{O}_B\otimes_\Z(\Z/d\Z)$ on itself by left multiplication; for a na\"{\i}ve full level $d$ structure  $\alpha\colon\mathcal{O}_B\otimes_\Z(\Z/d\Z)\overset\sim\rightarrow A[d]$ on $(A,\iota)$, we see that $\alpha_g=\alpha\circ r_g$ is a na\"{\i}ve full level $d$ structure on $(A,\iota)$, and the map $\alpha\mapsto \alpha_g$ gives a left action of $(\mathcal{O}_B\otimes_\Z(\Z/d\Z))^\times$ on the set of na\"{\i}ve full level $d$ structures on $(A,\iota)$. 
For any subgroup $U$ of $\widehat{\mathcal{O}}_B^\times$ (where $\widehat{\mathcal{O}}_B=\mathcal{O}_B\otimes_\Z\widehat{\Z}$), we obtain a left action of $U$ on full level $d$ structures by composing the action of $(\mathcal{O}_B\otimes_\Z(\Z/d\Z))^\times$ with the canonical projection $U\subseteq \widehat{\mathcal{O}}_B^\times\overset{\hat\pi_d}\longepi(\mathcal{O}_B\otimes_\Z(\Z/d\Z))^\times$. We finally define a right action of $U$ by composing this left action with the map $g\mapsto g'=\mathrm{norm}(g)g^{-1}$. We are especially interested in level $V_1(d)$ and $V_0(d)$ structures, where $V_0(d)\subseteq\widehat{\mathcal{O}}_B^\times$ is the inverse image via $\hat{\pi}_d$ of the subgroup of $(\mathcal{O}_B\otimes_\Z(\Z/d\Z))^\times$ consisting of matrices which are upper triangular modulo $d$, and $V_1(d)$ is the subgroup of $V_0(d)$ consisting of elements $g$ such that $\hat\pi_d(g)$ is of the form $\smallmat {*}{*}01$. 

\begin{definition}
Let $(A,\iota)$ be a QM abelian surface  over a $\Z[1/d]$-scheme $S$ and $V$ a subgroup of $\widehat{\mathcal{O}}_B^\times$. A \emph{na\"{\i}ve level $V$ structure} is an equivalence class of full level $d$ structures under the right action of $V$. We say that two triples $(A,\iota,\alpha)$ and $(A',\iota',\alpha')$ consisting of QM abelian surfaces equipped with level $ V$ structures are \emph{isomorphic} if there is an isomorphism of QM abelian surfaces $\varphi\colon A\rightarrow A'$ such that $\varphi\circ\alpha=\alpha'$.  \end{definition}

\begin{theorem}
Let $m\geq 0$ be an integer. The functor which takes a {$\Z[1/DMp^m]$}-scheme $S$ to the set of isomorphism classes of such triples $(A,\iota,\alpha)$ consisting of a QM abelian surface $(A,\iota)$ equipped with a na\"ive level $V_1(Mp^m)$ structure over $S$ is representable by a {$\Z[1/MDp^m]$}-scheme ${X}_{m}$, which is projective, smooth, of relative dimension $1$ and geometrically connected.
\end{theorem}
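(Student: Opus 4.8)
The plan is to reduce the $V_1(d)$-moduli problem to a rigid one, invoke the classical representability of polarized PEL moduli, and then read off smoothness, dimension, properness and connectedness from deformation theory and the complex uniformization, following \cite{Buz, Carayol-Shimura, Brooks, Mori1}. First I would pass to an auxiliary rigidifying level structure at a prime $\ell \nmid MDp$ with $V_1$-level $\geq 3$: such a level eliminates all nontrivial automorphisms of a principally polarized QM abelian surface (the polarization $\theta_A$ being the canonical one of \S\ref{sec-QMAS}), so the associated functor is rigid. Its representability by a quasi-projective $\Z[1/dD]$-scheme follows from Mumford's construction of moduli of polarized abelian schemes, the extra PEL data — the $\mathcal{O}_B$-action $\iota$, the principal polarization, and the level structures — being cut out by (locally) closed conditions. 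The scheme $X_m$ attached to $V_1(d)$ is then the quotient of this rigid moduli scheme by the residual free action of a finite group; freeness (again from rigidity) guarantees that the quotient exists as a scheme and represents the stated functor, the right $V$-action being the one defined via $g \mapsto g' = \norm(g)g^{-1}$.

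Smoothness of relative dimension $1$ I would obtain from Grothendieck--Messing deformation theory (equivalently Serre--Tate at $p$). Deformations of $(A,\iota,\theta_A,\alpha)$ correspond to liftings of the Hodge filtration inside the Dieudonné/de Rham crystal, compatibly with the $\mathcal{O}_B$-action and the polarization. The essential point is the idempotent $e$ of \S\ref{subsec-idempotents}: over a base where $B$ splits one has $\mathcal{O}_B \otimes \mathcal{O}_S \cong \M_2(\mathcal{O}_S)$, and the rank-two module $\mathrm{Lie}(A)$ splits as $e\,\mathrm{Lie}(A) \oplus \bar e\,\mathrm{Lie}(A)$ with the two summands exchanged by the off-diagonal matrix units, hence each locally free of rank $1$. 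The $\mathcal{O}_B$-compatible liftings of the filtration are thus parametrized by a locally free sheaf of rank $1$ and the obstruction vanishes, giving formal smoothness of relative dimension $1$; since the problem is locally of finite presentation, $X_m$ is smooth of relative dimension $1$.

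For properness the hypothesis $D > 1$ is decisive, as it forces $B$ to be a division algebra. I would check the valuative criterion: given a QM abelian surface over the fraction field of a discrete valuation ring $R$ with residue characteristic prime to $dD$, the embedding $\mathcal{O}_B \hookrightarrow \End$ of a maximal order in a division algebra forbids any toric part in the Néron model, so after a finite base change the surface acquires good reduction; as the polarization and the rigid level structure extend uniquely, the extension descends to an abelian scheme over $R$, providing the unique extension the criterion demands. With quasi-projectivity this makes $X_m$ projective. Geometric connectedness I would deduce from the complex picture: $X_m(\C)$ is a finite union of quotients of the upper half-plane indexed by $B^\times \backslash \widehat{B}^\times / V_1(d)$, and the norm condition packaged into the right action $g \mapsto \norm(g)g^{-1}$ is exactly what forces a single component; smoothness and properness then let me spread this out, the Stein factorization of $X_m \to \Spec\Z[1/dD]$ having geometrically connected fibres once the complex fibre is connected.

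I expect the principal obstacle to be the properness step — making rigorous that the division-algebra hypothesis excludes degeneration and that good reduction, attained only after a ramified base change, descends together with the polarization and level data; this is precisely the assertion that the Shimura curve is compact (no cusps). The connectedness bookkeeping via the norm map is the secondary delicate point, while the rigidification, the representability of the rigid problem, and the tangent-space computation are comparatively routine.
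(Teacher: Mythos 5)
Your proposal is correct and follows essentially the same route as the paper, whose proof consists purely of references (Morita for representability, Buzzard's rigidity lemma combined with Boutot's representability theorem for general $V\subseteq V_1(d)$, with smoothness, properness and connectedness as in those sources): auxiliary rigidifying level to kill automorphisms, representability of the rigid PEL problem, quotient by a free finite group action, deformation-theoretic smoothness via the idempotent $e$ splitting $\mathrm{Lie}(A)$ into line bundles, properness from $D>1$ forcing the absence of toric degeneration, and connectedness via the complex uniformization. The only loose point is your attribution of connectedness to the twisted right action $g\mapsto\norm(g)g^{-1}$; what is actually used is that the reduced norm maps $V_1(d)$ onto $\widehat{\Z}^\times$, which combined with strong approximation (available since $B$ is indefinite, $D$ having an even number of prime factors) yields a single component over $\C$, after which Stein factorization spreads geometric connectedness over $\Z[1/dD]$.
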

\begin{proof}[References for the proof] 
The representability result is due to Morita \cite[Main Theorem 1]{Morita}. A proof for a more general $V\subseteq V_1(d)$ can be found in \cite[\S2]{Buz} (see especially \cite[Corollary 2.3 and Propositions 2.4 and 2.5]{Buz}) combining the representability result of \cite[Theorem \S14, Exposé III]{Boutot} and the proof in \cite[Lemma 2.2]{Buz} of the rigidity of the relevant moduli problem. See also \cite[\S4]{DT} and \cite[Theorem 2.2]{Brooks}. 
\end{proof}

We have an isomorphism of compact Riemann surfaces 
\[X_m(\C)= B^\times\backslash(\mathcal{H}^\pm\times\widehat{B}^\times)/U_m\simeq \Gamma_m\backslash\mathcal{H}\]
where $U_m=\widehat{R}_m^\times$ was introduced in \S\ref{subsec-idempotents} and 
$\Gamma_m$ is the subgroup of the group of norm-one elements $(R_m^\times)_1$ of $R_m^\times$ such that their image in $\M_2(\Z/Mp^m\Z)$ is congruent to $\smallmat 1*01$ modulo $Mp^m$; here the action of $\Gamma_m$ on the complex upper half plane $\mathcal{H}$ is via fractional linear transformations through the isomorphism $i_\infty\colon B\otimes_\Q\R\simeq\M_2(\R)$ fixed before, while the action of $B^\times$ and $U_m$ on the product $\mathcal{H}^\pm\times\widehat{B}^\times=(\C-\R)\times\widehat{B}^\times$ is via $b(z,g)u=(b(z),bgu)$; finally, the isomorphism can be checked using the strong approximation theorem \cite[Theorem 5.2.10]{Miyake} for quaternion algebras. 

\subsection{Shimura curves with Drinfeld level structures} 
We sometimes need to use integral models of $X_m$ defined over a ring in which $p$ is \emph{not} invertible. Let $m\geq 1$ an integer. Recall that we have a left action of $\mathcal{O}_B$ on $A[p^m]$ and therefore we also obtain a left action of $\mathcal{O}_{B,p}=\mathcal{O}_B\otimes_\Z\Z_p$ on $A[p^m]$; thus, through $i_p$, we have a left action of $\M_2(\Z_p)$ on $A[p^m]$. 
We have a decomposition
\[A[p^m]=\ker(e)\oplus eA[p^m].\]

\begin{remark} 
The element $w\in \mathcal{O}_{B,p}$ satisfying $i_p(w)=\smallmat 0110$ induces an isomorphism of group schemes $w\colon\ker(e)\overset\sim\rightarrow\ker(1-e),$ and we have $eA[p^m]=\ker(1-e)$ and $(1-e)A[p^m]=\ker(e)$.
\end{remark}

\begin{definition}\label{def-drinfeld}
Let $(A,\iota)$ be a QM abelian surface defined over a scheme $S$. A \emph{level $\Gamma_1(p^m)$ structure on $A$} is the datum of a cyclic finite flat $S$-subgroup scheme $H$ of $eA[p^m]$ which is locally free of rank $p^m$, equipped with the choice of a $S$-generator $P$ of $H$ in the sense of \cite[\S1.4]{KM}.
\end{definition}

A simple generalization of \cite[Lemma 4.4]{Buz} to higher powers of $p$ shows, for a QM abelian surface $(A,\iota)/S$ over a $\Z[1/MDp]$-scheme $S$, the existence of a canonical isomorphism between level $V_1(p^m)$ structures and the choice of a generator $Q$ of a finite flat subgroup scheme $T$ of $\ker(e)\subseteq A[p^m]$; in our notation, the generator of $eA[p^m]$ is then $P=wQ$ and the subgroup is $H=wT$.

We denote by $T=(A,\iota,\alpha,(H,P))$ quadruplets consisting of a QM abelian surface $(A,\iota)/S$ over a scheme $S$ equipped with a na\"{\i}ve $V_1(M)$ level structure $\alpha$ and a level $\Gamma_1(p^m)$ structure $(H,P)$ on $A$. Two such quadruplets $T=(A,\iota,\alpha,(H,P))$ and $T'=(A',\iota',\alpha',(H',P'))$ are said to be \emph{isomorphic} if there is an isomorphism $\varphi\colon A\rightarrow A'$ of QM abelian surfaces which takes $\alpha$ to $\alpha'$ and such that $\varphi(H)=H'$ and $\varphi(P)=P'$.  

\begin{theorem} 
The functor which takes a $\Z_{(p)}$-scheme $S$ to the set of isomorphism classes of such quadruplets $T=(A,\iota,\alpha,(H,P))$ over $S$ is representable by a $\Z_{(p)}$-scheme $\mathcal{X}_{m}$, which is proper and finite over $X_0\otimes\Z_{(p)}$, where the tensor product is over $\Z[1/MD]$. Moreover, there is a canonical isomorphism of $\Q$-schemes between the generic fiber of $\mathcal{X}_{m}$ and $X_m$.
\end{theorem}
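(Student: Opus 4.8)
The plan is to realise $\mathcal{X}_m$ as the moduli scheme of Drinfeld level structures on the finite locally free group scheme $e\mathcal{A}[p^m]$ over the base curve $Y\defeq X_0\otimes_{\Z[1/MD]}\Z_{(p)}$, and then transport the relative representability and finiteness results of Katz--Mazur \cite{KM} to this setting, following the strategy of \cite{Buz, Brooks}. The base change $Y$ makes sense because $p\nmid MD$, so $\Z_{(p)}$ is a $\Z[1/MD]$-algebra; since the previous theorem gives that $X_0$ is projective and smooth over $\Z[1/MD]$, the scheme $Y$ is proper over $\Z_{(p)}$ and carries a universal QM abelian surface $(\mathcal{A},\iota,\alpha)\to Y$ (recall $M\geq 4$ makes the moduli problem rigid).

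First I would construct the auxiliary finite flat group scheme on which the $p$-power level structure lives. Since $p\nmid D$, the algebra $\mathcal{O}_{B,p}=\mathcal{O}_B\otimes_\Z\Z_p$ is isomorphic to $\M_2(\Z_p)$ via $i_p$, so the idempotent $e$ with $i_p(e)=\smallmat 1000$ is a genuine idempotent of $\End_Y(\mathcal{A})$ acting through $\iota$. As $\mathcal{A}[p^m]$ is finite locally free of rank $p^{4m}$ over $Y$, the decomposition $\mathcal{A}[p^m]=\ker(e)\oplus\ker(1-e)$ exhibits $e\mathcal{A}[p^m]=\ker(1-e)$ as a direct summand, hence a finite locally free $Y$-group scheme of rank $p^{2m}$. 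This group scheme plays the role that $E[p^m]$ plays for an elliptic curve $E$: by Definition \ref{def-drinfeld} a $\Gamma_1(p^m)$-structure is precisely a cyclic subgroup of rank $p^m$ of $e\mathcal{A}[p^m]$ together with a Drinfeld generator $P$, and such a $P$ is a section of $e\mathcal{A}[p^m]$ generating $H=\langle P\rangle$, so the pair $(H,P)$ is recovered from $P$ alone.

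Next I would invoke the Katz--Mazur theory of Drinfeld level structures. By \cite[Chapter 1]{KM}, for a finite locally free commutative group scheme $G$ over a base the moduli problem of $\Gamma_1(p^m)$-structures (Drinfeld generators of cyclic rank-$p^m$ subgroups) is relatively representable by a closed subscheme of $G$; applied to $G=e\mathcal{A}[p^m]$ over $Y$ this produces a $Y$-scheme $\mathcal{X}_m$, and since $G\to Y$ is finite, so is $\mathcal{X}_m\to Y$. Composing with the proper morphism $Y\to\Spec(\Z_{(p)})$ shows that $\mathcal{X}_m$ is a $\Z_{(p)}$-scheme, finite over $X_0\otimes\Z_{(p)}$ and hence proper over $\Z_{(p)}$, representing the stated functor, the universal object being the pullback of $(\mathcal{A},\iota,\alpha)$ equipped with the tautological $(H,P)$.

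Finally I would identify the generic fiber. Over $\Q$ the prime $p$ is invertible, so $e\mathcal{A}[p^m]$ is étale over the generic fiber of $Y$ and the Drinfeld condition on $P$ degenerates to the classical requirement that $P$ be a section of exact order $p^m$; by the generalization of \cite[Lemma 4.4]{Buz} recorded just above, the datum $(H,P)$ is then equivalent to a na\"{\i}ve level $V_1(p^m)$ structure, and combined with the level $V_1(M)$ structure $\alpha$ it is equivalent to a na\"{\i}ve level $V_1(Mp^m)=V_1(d)$ structure. Hence $\mathcal{X}_m\otimes_{\Z_{(p)}}\Q$ and $X_m\otimes_{\Z[1/dD]}\Q$ represent the same functor on $\Q$-schemes, yielding the canonical isomorphism. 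The main obstacle is the relative representability and finiteness of the Drinfeld-level moduli problem over the locus where $p$ is \emph{not} invertible, i.e.\ over the special fiber: this is exactly where the classical étale description breaks down and one must appeal to the full-set-of-sections/Drinfeld-generator machinery of \cite{KM}, after first checking that $e\mathcal{A}[p^m]$ is a well-defined finite locally free group scheme integrally over $\Z_{(p)}$.
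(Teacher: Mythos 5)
Your overall strategy coincides with the paper's: the paper's proof is a pointer to Buzzard's Proposition 4.1 (which treats $m=1$) and to Carayol, and Buzzard's argument is exactly the reduction you outline --- split off $e\mathcal{A}[p^m]$ using the idempotent, treat it as a surrogate for the $p^m$-torsion of an elliptic curve, invoke the Katz--Mazur machinery for relative representability and finiteness, and identify the generic fibre via \'etaleness and the generalization of Buzzard's Lemma 4.4. Your first step (that $e\mathcal{A}[p^m]=\ker(1-e)$ is a finite locally free direct summand of rank $p^{2m}$) and your last step (the generic fibre identification) are correct.

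There is, however, a genuine gap at the central step. The statement you attribute to [KM, Chapter 1] --- that for an \emph{arbitrary} finite locally free commutative group scheme $G$ the moduli problem of pairs $(H,P)$, with $H\subseteq G$ cyclic finite flat of rank $p^m$ and $P$ an $S$-generator of $H$, is representable by a closed subscheme of $G$, so that in particular ``the pair $(H,P)$ is recovered from $P$ alone'' --- is false in that generality. Take $G=\alpha_p\times\alpha_p$ over $\overline{\F}_p$: for any subgroup scheme $H\simeq\alpha_p$ of $G$, the norm condition defining a full set of sections shows that the zero section is a Drinfeld $\Z/p\Z$-generator of $H$ (for $f\in\Gamma(H_T,\mathcal{O}_{H_T})$ one has $\mathrm{Norm}(f)=f(0)^p$, since multiplication by $f-f(0)$ is strictly triangular), and the subgroups $H\simeq\alpha_p$ of $G$ form a $\mathbb{P}^1$-family; hence $H$ is not determined by $P$, and the functor of pairs $(H,P)$ is not even quasi-finite over the base, let alone a closed subscheme of $G$. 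What makes the argument work in the case at hand --- and this is precisely the content of Buzzard's Proposition 4.1 and of the Drinfeld/Katz--Mazur divisor-theoretic theory it rests on --- is that $e\mathcal{A}[p^\infty]$ is a $p$-divisible group of dimension $1$ and height $2$: the idempotent $e$ cuts the two-dimensional formal group $\widehat{\mathcal{A}}$ down to a one-parameter formal group $e\widehat{\mathcal{A}}$, so that, locally, $e\mathcal{A}[p^m]$ sits inside a smooth (formal) curve, sections define relative effective Cartier divisors, the condition that $\sum_{a\in\Z/p^m\Z}[aP]$ be a subgroup scheme is a closed condition, and $H$ is recovered as that divisor. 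This one-dimensionality is never stated or used in your proposal, and without it the appeal to [KM] does not go through; once you insert it (i.e., once you argue as Buzzard does), your sketch becomes the proof the paper refers to.
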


\begin{proof}[References for the proof]
The proof of this result is similar to the proof of \cite[Proposition 4.1]{Buz} which only considers the case $m=1$; the extension to the general case does not present difficulties. The reader may also consult \cite[Theorem 81]{Clark}, \cite[\S1.1]{Mori1} or \cite[\S7.3]{Carayol-Shimura}.  
\end{proof}

We use the same symbol $\mathcal{X}_m$ for $\mathcal{X}_m\otimes\Z_p$, where the tensor product is over $\Z_{(p)}$. We also remark that $\mathcal{X}_m$ admits a \emph{regular} model ${\mathcal{X}}^\mathrm{reg}_m$ over $\Z_p[\zeta_{p^m}]$, where $\zeta_{p^m}$ is a primitive $p^m$-root of unity, which is finite and flat over $\mathcal{X}_0\otimes_{\Z_p}\Z_p[\zeta_{p^m}]$; the scheme $\mathcal{X}_m^\reg$ is the normalization of 
$\mathcal{X}_0\otimes_{\Z_p}\Z_p[\zeta_{p^m}]$ in $X_m$. 
This model is constructed by means of Drinfeld bases (\cite[\S7.2]{Carayol-Shimura}) 
and canonically balanced Drinfeld level $\Gamma_1(p^m)$ structures (\cite[Definition 3.4]{Buz}), which we do not explicitly introduce in this paper. 
For references, the reader may consult \cite[Theorem 4.10]{Buz} for the case $m=1$ (where $\mathcal{X}_1^\reg$ is denoted 
$X^D(U,\mathrm{Bal.can.}\Gamma_1(\ell))$, and $\ell=p$ in our notation)
and \cite[\S7.3]{Carayol-Shimura} for $m\geq 1$ (where $\mathcal{X}_m^\reg$ is denoted $\boldsymbol{M}_{n,H}$; it should also be noticed that \cite{Carayol-Shimura} works over totally real number fields $F$, and \emph{excludes} the 
case $F=\Q$, where the correspondent results follow from the work of Morita).

\subsection{Igusa curves}\label{secIgusa} 
We start with the notion of trivialization. Let $(A,\iota)$ be an ordinary QM abelian surface over a $\Z_p$-scheme $S$. For each integer $m\geq 1$, let $A[p^m]^0$ be the connected component of the $p^m$-torsion subgroup scheme $A[p^m]$ of $A$, and let ${\mu}_{p^m}$ denote the group scheme of $p^m$-roots of unity. An \emph{arithmetic trivialization} on $A[p^m]$ is an isomorphism 
\[\beta\colon{\mu}_{p^m} \overset\sim\longrightarrow eA[p^m]^0\] 
of finite flat connected group schemes over $S$; note that the existence of an arithmetic trivialization actually implies that $A$ is ordinary. Similarly, let $A[p^\infty]^0$ be the connected component of the $p$-divisible group of $A$, and let ${\mu}_{p^\infty}=\dirlim_m\mu_{p^m}$ denote the group scheme of $p$-power roots of unity. An \emph{arithmetic trivialization} on $A[p^\infty]$ is an isomorphism 
\[\beta\colon{\mu}_{p^\infty} \overset\sim\longrightarrow eA[p^\infty]^0\] 
of finite flat connected group schemes over $S$. Equivalently, an arithmetic trivialization of $A[p^\infty]$ is an isomorphism of formal group schemes 
\[\phi_\beta\colon e\widehat{A}\overset\sim\longrightarrow\widehat{\mathbb{G}}_m\]
where $\widehat{\mathbb G}_m$ is the multiplicative formal group (recall that the connected component $A[p^\infty]^0$ of the $p$-divisible group $A[p^\infty]$ of an abelian variety $A$ is just the formal completion of $A$ along the zero section, see \cite[Example (b) in \S 2.3]{Tate-pdiv}). Observe that the choice of a $S$-generator of $\mu_{p^\infty}$, which we fix once and for all, allows us to view trivializations as particular Drinfeld level $\Gamma_1(p^m)$ structures; see \cite[\S1.12]{KM} for the interpretation of $\mu_{p^m}$ in terms of Drinfeld level structures for the multiplicative group.

Let $\mathbb{X}_0$ denote the special fiber of $\mathcal{X}_0$, let $\mathrm{Ha}$ be the Hasse invariant of $\mathbb{X}_0$ and let $\widetilde{\mathrm{Ha}}$ be a lift of $\mathrm{Ha}$ to $\mathcal{X}_0$ (\cite[\S7]{Kassaei}). Then the \emph{ordinary locus} $\mathcal{X}_0^\ord=(\mathcal{X}_0\otimes\Z_p)[1/\widetilde{\mathrm{Ha}}]$ (tensor product over $\Z_{(p)}$) is an affine open $\Z_p$-subscheme of $\mathcal{X}_0\otimes\Z_p$ representing  the moduli problem which associates to any $\Z_p$-scheme $S$ the isomorphism classes of triplets $(A,\iota,\alpha)$ where the $S$-scheme $A$ is ordinary and $\alpha$ is a level $V_1(M)$ structure. Denote by $\mathrm{Ig}_{m}$ the $\F_p$-scheme representing the moduli problem which associates to any $\F_p$-scheme $S$ the set of isomorphism classes of quadruplets $(A,\iota,\alpha,\beta )$ consisting of an ordinary QM abelian surface $(A,\iota)$ over $S$ equipped with a level $V_1(M)$ structure $\alpha$ and an arithmetic trivialization $\beta$ of $A[p^m]$ (see \cite[Chapter 8]{Hida-padicbook}, \cite[\S2.1]{Hida-control} or \cite[\S2.5]{Burungale}). Since the étale part of $A[p^m]$ is locally constant in the ordinary locus $\X_0^\ord$ of $\X_0$ (whose points correspond to ordinary QM abelian surfaces), we have a finite \'etale map $\mathrm{Ig}_m\rightarrow\X_0^\ord$.
For each $m\geq 0$ and each $\F_p$-scheme $S$, the canonical maps ${\mu}_{p^m}\hookrightarrow {\mu}_{p^{m+1}}$ of $S$-group schemes induce a canonical map $\mathrm{Ig}_{m+1}\rightarrow\mathrm{Ig}_{m}$ and we can define the $\F_p$-scheme \[\widehat{\mathrm{Ig}}=\invlim_m\mathrm{Ig}_{m}.\] 

\subsection{Special fibers}\label{secspecialfibers}
We now discuss the reduction of $\mathcal{X}_m$ to $\F_p$. 
A complete reference for this result is 
\cite[Theorem 81]{Clark}, reviewed in \cite[\S1.1]{Mori1}. Partial 
results have been obtained by Morita \cite{Morita}, Carayol \cite{Carayol-Shimura}, Buzzard \cite{Buz}. 

A result of Morita \cite{Morita} shows that the special fiber $\X_0$ of $\mathcal{X}_0$ is smooth; 
see also \cite[\S6.1]{Carayol-Shimura}, where $X_0$ is denoted $M_{0,H}$ and $\mathcal{X}_0$ is 
denoted $\boldsymbol{M}_{0,H}$. 
Following the exposition of \cite[\S1.1]{Mori1},
\[\mathcal{X}_m\otimes\F_p\simeq\coprod_{r=0}^m\mathcal{Z}(r)\]
where the components $\mathcal{Z}(r)$ have multiplicity $\phi(p^{m-r})$, and $\phi$ is the Euler totient function. 
The reduced scheme $\mathcal{Z}(r)^\mathrm{red}$ is the Igusa curve 
$\mathrm{Ig}_r$, and all the components meet transversally at each supersingular point. 
Denote $\mathcal{X}_m^\mathrm{arith}$ the smooth open subscheme obtained by discarting the non-reduced components at all prime divisors $\ell\mid Mp$. Then 
$\mathcal{X}_m^\mathrm{arith}$ represents the functor on 
$\Z_p$-schemes $S$ to isomorphism classes of quadruplets $(A,\iota,\alpha,\beta)$ 
over $S$,
where $\beta$ is an arithmetic trivialization of $A[p^m]$ as in \S\ref{secIgusa}, 
and $\alpha$ is is an \emph{arithmetic trivialization} of $A[M]$, 
defined (similarly as in the case of arithmetic trivializations of $A[p^m]$) as an isomorphism 
$\alpha\colon{\mu}_{M} \overset\sim\rightarrow eA[M]^0$ 
of finite flat connected group schemes over $S$. 

\section{Modular forms}\label{secmodforms}
The goal of this section is to introduce Serre--Tate expansions of quaternionic modular forms, and inverse limits of such. We also develop a theory of integral modular forms using Serre--Tate expansions. Furthermore, we introduce Katz modular forms in the quaternionic setting to give a meaning to Serre--Tate expansions. The viewpoint is elementary with respect to other sources (\cite{CHJ}, for example). The basic references are \cite{Brasca-Eigen2, Burungale, Mori2, Mori, Mori1, Brooks, Ohta-ES}. 

\subsection{Modular forms on Shimura curves}\label{sec.modforms}
Let $k\geq 2$ be an even integer. 
We define the \emph{$\C$-vector space of modular forms of weight $k$ over $X_m$} to be the finite-dimensional $\C$-vector space $S_k(\Gamma_m,\C)$ of holomorphic functions $f\colon\mathcal{H}\rightarrow\C$ such that $f|\gamma=f$ for all $\gamma\in \Gamma_m$; here we use standard notations: for a matrix $\gamma=\smallmat abcd\in\GL_2(\R)$ with $\det(\gamma)>0$ we denote
\begin{equation}\label{MFhol}
(f|\gamma)(z)=\det(\gamma)^{k/2}(cz+d)^{-k}f(\gamma(z))
\end{equation}
and we use the fixed isomorphism $i_\infty:B_\infty\simeq\M_2(\R)$ 
to obtain an action of the elements of $B^\times$ of positive reduced norm on the complex upper half plane $\mathcal{H}$. 

The map $f\mapsto \omega_f$ which takes an holomorphic function $f$ in $\mathcal{H}$ to the differential form $f(z)dz^{\otimes{k/2}}$ gives an isomorphism of $\C$-vector spaces 
\begin{equation}\label{hol-dif}
S_k(\Gamma_m,\C)\simeq H^0(X_m(\C),\Omega^{\otimes k/2}).\end{equation}
Here, for a scheme $X$, we denote by $\Omega_X^1$ the sheaf of regular differentials and, for an integer $i\ge 0$, by $\Omega_X^{\otimes i}$ its $i$-th tensor product; we often drop $X$ from the notation when it is clear. More generally, using that $X_m$ has a model defined over $\Q$, for any field extension $F/\Q$ we can consider the $F$-vector space \[H^0(X_m,\Omega^{\otimes k/2})\otimes F\simeq H^0(X_{m/F},\Omega^{\otimes k/2})\] (here $X_{m/F}=X_m\otimes F$, the tensor products are over $\Q$, and the isomorphism follows from the universal coefficient theorem). For each embedding $F\hookrightarrow\C$, we can use \eqref{hol-dif} to define a $F$-subspace $S_k(\Gamma_m,F)$ of $S_k(\Gamma_m,\C)$. 

We also need to define modular forms for more general subgroups than $\Gamma_m$. For each integer $m\geq 1$, define $\Gamma_{m-1}^{(m)}=\Gamma_{m-1}\cap (R_m^\times)_1$ to be the subgroup of $\Gamma_{m-1}$ consisting of those $\gamma$ whose image in 
$\M_2(\Z_p)$ is congruent to $\smallmat {*}{*}{0}{*}$ modulo $p^m$;
so we have $\Gamma_m\subseteq\Gamma_{m-1}^{(m)}\subseteq\Gamma_{m-1}$ and $\Gamma_m$ is normal in $\Gamma_{m-1}^{(m)}$. Let $X_{m-1}^{(m)}$ be the corresponding Shimura curve, defined over $\Q$, whose complex points are identified with $\Gamma_{m-1}^{(m)}\backslash\mathcal{H}$ and let $S_k(\Gamma_{m-1}^{(m)},\C)$ be defined as in \eqref{MFhol} or \eqref{hol-dif} with respect to the elements $\gamma\in\Gamma_{m-1}^{(m)}$ instead of those of the whole group $\Gamma_{m-1}$; then we have canonical inclusions $S_k(\Gamma_{m-1},\C)\subseteq S_k(\Gamma_{m-1}^{(m)},\C)\subseteq S_k(\Gamma_m,\C)$ of $\C$-vector spaces. Again, more generally, we can define $S_k(\Gamma_{m-1}^{(m)},F)=H^0(X_{m-1/F}^{(m)},\Omega^{\otimes{k/2}})$, where as before $X_{m-1/F}^{(m)}$ is the base change of $X_{m-1}^{(m)}$ to $F$. 

\subsection{Hecke operators}\label{secHeckeop}
Hecke operators on $S_k(\Gamma_m,\C)$ are defined using double coset decompositions $T(\alpha)=\Gamma_m\alpha\Gamma_m=\coprod_i\Gamma_m\alpha_i$ 
for $\alpha\in \Delta_m$ 
and $T(\alpha)$ in the Hecke ring $\mathfrak{h}(R_m^\times,\Delta_m)$,
where $\Delta_m$ is the semigroup of $b\in B^\times$ whose image in $\M_2(\Z_\ell)$ is congruent modulo $\ell^{\val_\ell(Mp^m)}$ to a matrix of the form $\smallmat a{*}0{*}$ with $a\in\Z_\ell^\times$, for all $\ell\nmid D$ (see \cite[\S3.1]{shimura}, \cite[\S5.3]{Miyake}, \cite[\S2.1]{LV-IJNT}). The action of $T(\alpha)$ on $f\in S_k(\Gamma_m,\C)$ is via $f|T(\alpha)=\det(\alpha)^{k/2-1}\sum_if|\alpha_i$. If $T=T(\alpha)$ and $\alpha^\iota=\Norm(\alpha)\alpha^{-1}$, where $\Norm$ is the reduced norm map, we set $T^*=T(\alpha^\iota)$. 

By strong approximation (\emph{cf.} \cite[Theorem 5.2.10]{Miyake}), the Hecke algebra $\mathfrak{h}(R_{m}^\times,\Delta_{m})$ is canonically isomorphic to the tensor product of local Hecke algebras $\bigotimes_\ell \mathfrak{h}(R_{m,\ell}^\times,\Delta_{m,\ell})$ for all primes $\ell$, where $\Delta_{m,\ell}$ is the semigroup of elements of $R_{m,\ell}$ of non-zero norm for $\ell\nmid Mp$ whose image in $\M_2(\Z_\ell)$ is congruent modulo $\ell^{\val_\ell(Mp^m)}$ to a matrix of the form $\smallmat a{*}0{*}$ with $a\in\Z_\ell^\times$, for all $\ell\nmid D$ (\emph{cf.} \cite[Theorem 5.3.5]{Miyake}). This correspondence can be used to describe Hecke operators more explicitly. Let $T_\ell$ for $\ell\nmid MDp^m$ and $U_\ell$ for $\ell\mid Mp^m$ denote the Hecke operator $T(\alpha_\ell)$ for $\alpha_\ell$ an element of norm equal to the prime number $\ell$. Then $U_\ell$ corresponds to the element having $R_{m,\ell}^\times\smallmat 100\ell R_{m,\ell}^\times=\sum_{i=0}^{\ell-1}R_{m,\ell}^\times\smallmat 1i0\ell$ in the $\ell$-factor and $1$ elsewhere, while $T_\ell$ is the sum of the previous double coset with $R_{m,\ell}^\times\smallmat \ell001R_{m,\ell}^\times$.
For $d\in (\Z/MDp^m\Z)^\times$, we denote by $\langle d\rangle$ the operator corresponding to the choice of the element having $R_{m,\ell}^\times\smallmat{\ell^{\val_\ell(d)}}{0}{0}{\ell^{\val_\ell(d)}}$ on the $\ell$-factor.

\subsection{Atkin--Lehner involution}\label{sectionAL} 
The Atkin--Lehner involution can be defined as in \cite[\S3.3]{Ogg} (see also \cite[Section 1]{dVP}, \cite[\S2.2]{LRV}) and is described by the map $f\mapsto f|\tau_m$, where $\tau_m$ is an element $R_m$ of norm equal to $Mp^m$ which normalizes the subgroup $(R_m^\times)_1$. The involution $\tau_m$ is not an element of the Hecke algebra $\mathfrak{h}(R_m^\times,\Delta_m)$; however, if $\widetilde\Delta_m$ is the semigroup of elements of non-zero norm in $R_{m}$, then $\tau_m$ is an element of the Hecke algebra $\mathfrak{h}(R_m^\times,\widetilde{\Delta}_m)$ which has a similar adelic description with $\ell$-factor $R_{m,\ell}^\times\tau_{m,\ell}R_{m,\ell}^\times=R_{m,\ell}^\times\tau_{m,\ell}$ with $\tau_{m,\ell}=\smallmat 01{-\ell^{\val_\ell(Mp^m)}}0$.

Let $(R^\times_{m,\ell})_1$ be the subgroup of norm $1$ elements of $R_{m,\ell}^\times$ and ${\Gamma}_{m,\ell}$ the subgroup consisting of those elements which are congruent to $\smallmat 1*01$ modulo $\ell^{\val_\ell(Mp^m)}$ and fix a system of representatives $\Sigma_m=\{\varsigma_a\mid a\in(\Z/Mp^m\Z)^\times\}$ of $(R^\times_m)_1/\Gamma_m$. Then by the strong approximation theorem we can set up an isomorphism of groups $(R^\times_m)_1/\Gamma_m\simeq \prod_{\ell\mid Mp}(R^\times_{m,\ell})_1/{\Gamma}_{m,\ell}$, which we can normalize in such a way that $\varsigma_a$ is sent to the element with $\ell$-component equal to $\smallmat {\ell^{\val_\ell(a)}}{*}{0}{\ell^{-\val_\ell(a)}}$ for integers $a$ such that $1\leq a<Mp^{m}$, with $(a,Mp^m)=1$. 

\begin{lemma}\label{Ohtaformulas}
For $f\in S_k(\Gamma_m,\C)$, $T\in\mathfrak{h}(R_m^\times,\Delta_m)$ 
and $d\in (\Z/MDp^m\Z)^\times$ we have: 
\begin{enumerate}
\item $f|\tau_m|T^*=f|T|\tau_m$. 
\item $f|\tau_m|\langle d\rangle^*=f|\langle d\rangle|\tau_m$. 
\item $f|\tau_m|\varsigma_a=f|\varsigma_a^{-1}|\tau_m$.    \end{enumerate}
\end{lemma}
\begin{proof}
Given the above adelic description of the Hecke operators and the Atkin--Lehner involution, they reduce to a simple check. However, one can indirectly check these formulas noticing that they are well known for elliptic modular forms (see for example \cite[(2.1.8)]{Ohta-ES}), and (since it is enough to check them on Hecke eigenforms) use Jacquet--Langlands correspondence, which is equivariant with respect to the Hecke and diamond operators, and with respect to the Atkin--Lehner involution by \cite[Theorem 1.2]{BD96}.\end{proof}

Fix a system of representatives $\sigma_a$ for $\Gamma_{m-1}^{(m)}/\Gamma_m$. Recall that $\Gamma_{m,p}$ is the subgroup of $R_{m,p}^\times$ consisting of elements congruent to $\smallmat 1*01$ modulo $p^m$, and let $\Gamma_{m-1,p}^{(m)}$ be the subgroup of $R_{m,p}^\times$ consisting of elements which are congruent to $\smallmat 1*01$ modulo $p^{m-1}$ and congruent to $\smallmat **0*$ modulo $p^m$. Again by the strong approximation theorem we have a canonical isomorphism of groups $\Gamma_{m-1}^{(m)}/\Gamma_m\simeq \Gamma_{m-1,p}^{(m)}/\Gamma_{m,p}$ and we may choose the system of representatives so that $\sigma_a$ is sent to a matrix $\smallmat a{*}0{a^{-1}}$ with $a\in U^{(m-1)}/U^{(m)}$, where for each integer $m\geq 1$ we set $U^{(m)}\defeq 1+p^{m}\Z_p$. We may also fix a system of representatives $\rho_j$ for $\Gamma_{m-1}/\Gamma_{m-1}^{(m)}$ so that under the canonical isomorphism $\Gamma_{m-1}/\Gamma_{m-1}^{(m)}\simeq\Gamma_{m-1,p}/\Gamma_{m-1,p}^{(m)}$ these $\rho_j$ are sent to matrices $\smallmat 1{0}{jp^{m-1}}{1}$ for $j\in\Z/p\Z$. Then we may write \[\Gamma_{m-1}=\coprod_{a,j}\Gamma_m(\sigma_a\rho_j)=\coprod_{j,a}(\rho_j\sigma_a)\Gamma_m\] (with $a$ and $j$ as before). Define for all integers $m\geq 1$ the trace maps
\begin{itemize}
\item $\mathrm{Tr}_m^{(m)}\colon S_2(\Gamma_m,\C)\rightarrow S_2(\Gamma_{m-1}^{(m)},\C)$ by $\mathrm{Tr}_m^{(m)}(f)=\sum_a f|\sigma_a$;
\item $\mathrm{Tr}_{m-1}^{(m)}\colon S_2(\Gamma^{(m)}_{m-1},\C)\rightarrow S_2(\Gamma_{m-1},\C)$
by $\mathrm{Tr}_{m-1}^{(m)}(f)=\sum_j f|\rho_j$;
\item $\mathrm{Tr}_m\colon S_2(\Gamma_m,\C)\to S_2(\Gamma_{m-1},\C)$ by $\mathrm{Tr}_m=\mathrm{Tr}^{(m)}_{m-1}\circ\mathrm{Tr}^{(m)}_m$.
\end{itemize}

\begin{lemma}\label{lemmatrace}
We have the following formulas:
\begin{enumerate}
\item $\mathrm{Tr}_m^{(m)}(f|\tau_m)=(\mathrm{Tr}_m^{(m)}(f))|\tau_m$,  
for $f\in S_2(\Gamma_m,\C)$. 
\item $\mathrm{Tr}_{m-1}^{(m)}(f)|\tau_{m-1}=f|\tau_{m}|U_p$, 
for $f\in S_2(\Gamma^{(m)}_{m-1},\C)$. 
\end{enumerate}
\end{lemma}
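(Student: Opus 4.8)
The plan is to prove both formulas by reducing them to identities of double cosets in $B^\times$, which by strong approximation are governed entirely by the local components at $p$ and at the primes dividing $M$; since $B$ is split there, these local computations are identical to the ones for $\GL_2$, where the formulas are part of Ohta's theory (cf. the formulas in \cite{Ohta-ES}). Just as in the proof of Lemma \ref{Ohtaformulas}, one could alternatively invoke the Jacquet--Langlands correspondence to transport the known elliptic identities, checking them on Hecke eigenforms; but I will indicate the direct coset computation, which is cleaner here because it also pins down the relevant reindexing.

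For part (1), I would first record that $\tau_m$ normalizes both $\Gamma_m$ and $\Gamma_{m-1}^{(m)}$: this follows from the local description $\tau_{m,p}=\smallmat 0 1 {-p^m} 0$ together with the analogous elements at $\ell\mid M$, since conjugation by $\tau_{m,p}$ sends $\smallmat a b 0 {a^{-1}}$ to $\smallmat {a^{-1}} 0 {-p^m b} a$. Reading this on the representatives $\sigma_a\leftrightarrow\smallmat a * 0 {a^{-1}}$ of $\Gamma_{m-1}^{(m)}/\Gamma_m$ (with $a\in U^{(m-1)}/U^{(m)}$) shows that modulo $p^m$ the conjugate is diagonal with entries $(a^{-1},a)$, whence $\tau_m\sigma_a\tau_m^{-1}=\sigma_{a^{-1}}\gamma_a$ for some $\gamma_a\in\Gamma_m$. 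Then I would compute
\[\mathrm{Tr}_m^{(m)}(f|\tau_m)=\sum_a f|(\tau_m\sigma_a)=\sum_a f|(\sigma_{a^{-1}}\gamma_a\tau_m)=\sum_a\bigl((f|\sigma_{a^{-1}})|\gamma_a\bigr)|\tau_m.\]
Because $\Gamma_m$ is normal in $\Gamma_{m-1}^{(m)}$ and $\sigma_{a^{-1}}\in\Gamma_{m-1}^{(m)}$, the form $f|\sigma_{a^{-1}}$ is again $\Gamma_m$-invariant, so the inner $|\gamma_a$ drops out; reindexing by the bijection $a\mapsto a^{-1}$ of $U^{(m-1)}/U^{(m)}$ then gives $\sum_a (f|\sigma_a)|\tau_m=\mathrm{Tr}_m^{(m)}(f)|\tau_m$, as desired.

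For part (2), I would expand both sides into sums over coset representatives and match them. On the right, with $U_p$ represented at $p$ by $\beta_i\leftrightarrow\smallmat 1 i 0 p$ for $i=0,\dots,p-1$, the product $\tau_m\beta_i$ has $p$-component $\smallmat 0 1 {-p^m}0\smallmat 1 i 0 p=\smallmat 0 p {-p^m}{-p^m i}$, which by scale-invariance of the weight-$2$ slash (invariance under $\gamma\mapsto\lambda\gamma$) acts as $\smallmat 0 1 {-p^{m-1}}{-ip^{m-1}}$. On the left, the representatives $\rho_j\leftrightarrow\smallmat 1 0 {jp^{m-1}}1$ of $\Gamma_{m-1}/\Gamma_{m-1}^{(m)}$ give $\rho_j\tau_{m-1}$ with $p$-component $\smallmat 1 0 {jp^{m-1}}1\smallmat 0 1 {-p^{m-1}}0=\smallmat 0 1 {-p^{m-1}}{jp^{m-1}}$. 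A short computation with left multiplication by $\Gamma_{m-1}^{(m)}$ (using $\smallmat 1 0 {p^m c}1$) shows that $f|\smallmat 0 1 {-p^{m-1}}{jp^{m-1}}$ depends only on $j\bmod p$; hence the left terms and the scaled right terms range over the same set of slashes under $j\equiv -i\pmod p$, and summing gives the identity.

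The main obstacle, and the point requiring care, is that the slash action is defined through the archimedean embedding $i_\infty$, whereas the coset combinatorics above is read off at $p$; unlike the split case $B=\M_2(\Q)$, these are genuinely different components of the same global element. What makes the argument valid is that $\mathrm{Tr}$, the $\tau_\bullet$ and $U_p$ are all induced by elements of the Hecke algebra $\mathfrak{h}(R_m^\times,\widetilde{\Delta}_m)$ (equivalently, by correspondences on the tower of Shimura curves), so each identity is really an identity of formal sums of cosets in $B^\times$; by strong approximation such an identity is determined by the components at $p$ and at $\ell\mid M$, which is exactly what the computations verify. I expect that setting up this bookkeeping carefully --- choosing global representatives with prescribed local components and checking independence of the choices --- is the only delicate part; once it is in place, both (1) and (2) follow from the displayed local matrix identities, in agreement with the elliptic formulas of \cite{Ohta-ES}.
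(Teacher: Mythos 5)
Your proposal is correct and takes essentially the same approach as the paper: the paper's proof consists precisely of the remark that, given the explicit adelic description of the operators and the chosen representatives $\sigma_a$, $\rho_j$, the formulas reduce to standard matrix/coset computations (with the Jacquet--Langlands transfer of Ohta's elliptic formulas offered as a shortcut, which you also mention). Your write-up simply carries out those local computations explicitly, including the strong-approximation bookkeeping that the paper leaves implicit.
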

\begin{proof}
As in Lemma \ref{Ohtaformulas}, given the explicit description of Hecke operators and chosen representatives, the proof reduces to standard matrix computations; a short proof of these formulas can be obtained by using the Jacquet--Langlands correspondence, which is equivariant with respect to the operators involved, and noticing that for elliptic modular forms these formulas are well known (\cite[(2.3.3)]{Ohta-ES}).
\end{proof}

\subsection{Newforms, $p$-stabilizations and $p$-depletions} \label{stabilization}
For $d\mid Mp$, let $v_d$ be an element of the Hecke algebra $\mathfrak{h}(R_m^\times,\widetilde{\Delta}_m)$ which has an adelic description with trivial $\ell$-factors at primes $\ell\neq d$ and at primes $\ell\mid d$ the $\ell$-factor is $R_{m,\ell}^\times \smallmat {\ell^{\val_\ell(d)}}0{0}1R_{m,\ell}^\times=R_{m,\ell}^\times v_\ell$.  
Using the Petersson inner product (\cite[\S6.1]{Miyake}) and the elements $v_d$, we may introduce the notion of 
quaternionic \emph{newforms} similarly to the more familiar case of elliptic modular forms:
if we denote $S_k^\mathrm{old}(\Gamma_m,\C)$ the $\C$-vector space spanned by 
$g|v_d$ for all $d\mid Mp^m$ with $d\neq Mp^m$ and all forms of level $d$, 
the $\C$-vector space of newforms  $S_k^\mathrm{new}(\Gamma_m,\C)$
is the orthogonal complement of $S_k^\mathrm{old}(\Gamma_m,\C)$ in 
$S_k(\Gamma_m,\C)$ with respect to the Petersson inner product: 
see \cite[\S1.5]{Hijikata} for details (here \emph{forms of level $d$} 
are defined as before by replacing $MDp^m$ by its divisor $d$). 
Alternatively we can define newforms of $S_k(\Gamma_m,\C)$ as those quaternionic modular forms which correspond
via the Jacquet--Langlands correspondence (\cite[Proposition 2.12]{Hida-Abelian}, \cite[\S2.2]{LRdVP}; see also \cite[\S1.6]{BD}) to an elliptic newform of weight $k$ 
and level $\Gamma(MD,p^m)$, where $\Gamma(MD,p^m)$ is the 
subgroup of $\SL_2(\Z)$ consisting of matrices that are upper triangular modulo $MD$ 
and are congruent to $\smallmat 1*01$ modulo $p^m$: this amounts to requiring that 
$S_k^\mathrm{new}(\Gamma_m,\C)$ is spanned by quaternion eigenforms whose Hecke eigenvalues at primes $\ell\nmid MDp$ are equal to the Hecke eigenvalues of an elliptic newform of weigh $k$ 
and level $\Gamma(MD,p^m)$.  

Let $f^\sharp$ be a newform in $S_k(\Gamma_0,\C)$.
Assume that the $T_p$-operator acts on $f^\sharp$ as multiplication by a $p$-adic unit; let $\alpha$ and $\beta$ be the roots of the Hecke polinomial, and suppose that $\alpha$ is a $p$-adic unit. Then we define the \emph{ordinary $p$-stabilization} of $f^\sharp$ to be \[f=f^\sharp-\beta f^\sharp|v_p.\] Then $f\in S_k(\Gamma_1,\C)$ (see \cite[\S2.1]{Hijikata}) and is an eigenform for $U_p$ with eigenvalue $\alpha$: this is a standard computation using the formulas in \cite[\S2.4]{Hijikata}.

Define the \emph{$p$-depletion} of $f\in S_k(\Gamma_0,\C)$ or $f\in S_k(\Gamma_1,\C)$ to be 
\[f^{[p]}=f-f|U_p|v_p.\] Then a standard computation using that $v_pU_p$ is the identity operator shows that $f^{[p]}=(f^\sharp)^{[p]}$ in the notation of the preceding paragraph.

\subsection{Katz modular forms}\label{KatzMF}
{Following \cite{Brasca, Brooks} we introduce the notion of Katz modular forms in the quaternionic setting.

Let $A$ be a QM abelian surface over a $\Z_{(p)}$-scheme $S$. Then $\pi_* \Omega_{A/S}$, where $\Omega_{A/S}$ is the bundle of relative differentials and $\pi:A\rightarrow S$ is the structural map, inherits an action of $\mathcal{O}_B$. Tensoring the action of $\mathcal{O}_B$ on $\pi_* \Omega_{A/S}$ with the scalar action of $\Z_{(p)}$ we obtain an action of $\mathcal{O}_{B}\otimes_\Z\Z_{(p)}$ on $\pi_* \Omega_{A/S}$. Define the invertible sheaf $\underline{\omega}_{A/S}=e\pi_* \Omega_{A/S}$. When $S$ is clear, we just write $\underline{\omega}_A$ for $\underline{\omega}_{A/S}$. 

Let $R$ be a $\Z_{(p)}$-algebra. A \emph{test object} over $R$ for $\Gamma_m$ (resp. for $\Gamma_{m-1}^{(m)}$) for an integer $m\geq 0$ (resp. $m\ge 1$) is a collection of objects $T=(A, \iota,\alpha,(P,H))$ consisting of
\begin{itemize}
\item a QM abelian surface $(A,\iota)$ defined over $S=\Spec(R)$;
\item a level $V_1(M)$ structure $\alpha$ on $A/S$;
\item a Drinfeld level $\Gamma_1(p^m)$ (resp. $\Gamma_0(p^{m})\cap\Gamma_1(p^{m-1})$) structure $(P,H)$ on $A/S$ consisting of a cyclic finite flat $S$-subgroup scheme $H\subseteq eA[p^m]$ locally free of rank $p^m$ and a point $P\in H(S)$ of exact order $p^m$ (resp. $p^{m-1}$).
\end{itemize}

\begin{remark} 
Suppose that $A$ is ordinary. Then arithmetic trivializations give rise to Drinfeld level structures; in particular, one can take as Drinfeld level $\Gamma_1(p^m)$ (resp. $\Gamma_0(p^{m})\cap\Gamma_1(p^{m-1})$)  structures a trivialization $\beta\colon \mu_{p^m}\simeq eA[p^m]^0$ (resp. a pair of trivializations $\beta_{m-1}\colon \mu_{p^{m-1}}\simeq eA[p^{m-1}]^0$ and $\beta_m\colon\mu_{p^m}\simeq eA[p^m]^0$ compatible in the sense that $\beta_m(\zeta^p)=\beta_{m-1}(\zeta)$; it suffices to specify $\beta_m$).\end{remark}

We say that two test objects are \emph{isomorphic} if there is an isomorphism of QM abelian surfaces which induces isomorphisms of $V_1(M)$ and Drinfeld level $\Gamma_1(p^m)$ (resp. $\Gamma_0(p^{m})\cap\Gamma_1(p^{m-1})$) structures. For a morphism $\varphi\colon R_0 \rightarrow R'_0$ of $R$-algebras and a test object $T=(A,\iota,\alpha,(P,H))$, we write $T_\varphi=(A_\varphi,\iota_\varphi,\alpha_\varphi,(P_\varphi,H_\varphi))$ for the base change of $T$ to $R'_0$ via $\varphi$.

\begin{definition}\label{def-modforms} Let $R$ be a $\Z_{(p)}$-algebra and $k$ be an integer. A \emph{$R$-valued Katz modular form of weight $k$ on $\Gamma_m$} (resp. $\Gamma^{(m)}_{m-1}$) is a rule $\omega_f$ that assigns to every isomorphism class of test objects $T=(A,\iota,\alpha,\beta)$ for $\Gamma_m$ (resp. $\Gamma^{(m)}_{m-1}$) over a $R$-algebra $R_0$ 
a global section $\omega_f(T)$ of $\underline{\omega}_A^{\otimes k}$ satisfying the following base change
compatibility condition: for any morphism $\varphi\colon R_0 \rightarrow R'_0$ of $R$-algebras, we have $\omega_f(T_\varphi) = \varphi^*(\omega_f(T))$.
\end{definition}

Denote by $S_k^\Katz(\Gamma_m,R)$ (resp. $S_k^\Katz(\Gamma^{(m)}_{m-1},R)$) the $R$-module of $R$-valued Katz modular forms of weight $k$ and level $\Gamma_m$ (resp. $\Gamma^{(m)}_{m-1}$).

Equivalently, a $R$-valued Katz modular form of weight $k$ and level $\Gamma_m$ (resp. $\Gamma^{(m)}_{m-1}$) is a rule $f$ that assigns to every isomorphism class of test objects $T=(A,\iota,\alpha,\beta)$ for $\Gamma_m$ (resp. $\Gamma^{(m)}_{m-1}$) over a $R$-algebra $R_0$, and a section $\omega$ of $\underline{\omega}_A^{\otimes k}$  
a value $f(T,\omega)\in R_0$ satisfying the following conditions: 
\begin{itemize}
    \item Base change condition: $f(T_\varphi,\omega) = \varphi(f(T,\varphi^*(\omega))$ for any morphism $\varphi\colon R_0 \rightarrow R'_0$ of $R$-algebras; 
    \item Weight $k$ condition: $f(T,\lambda\omega) =
\lambda^{-k}f(T,\omega)
		$, for any $\lambda\in R_0^\times$.	 
\end{itemize} The correspondence $f\leftrightarrow\omega_f$ showing the equivalence between the two definitions is given by the formula $\omega_f(T)=f(T,\omega)\omega$.

Let $\mathcal{A}_m\rightarrow\mathcal{X}_m$ be the universal object of the moduli space $\mathcal{X}_m$. Then we have an isomorphism of $R$-modules (which we write $f\mapsto \omega_f$)
\[S_k^\Katz(\Gamma_m,R)\overset\sim\longrightarrow H^0(\mathcal{X}_m,\underline{\omega}_{\mathcal{A}_m}^{\otimes k})\] of $R$-modules induced by the evaluation of Katz modular forms on the universal object over $R$ associated with $\mathcal{A}_m$. See \cite[\S3.1]{Brooks} for more details and for equivalent definitions.

\begin{lemma}\label{lemmamodfors} Let $F$ be a subfield of $\C$ and $k\geq 2$ an even integer. We have a canonical isomorphism  of $F$-vector spaces $S_k(\Gamma_m,F)\simeq S_k^\Katz(\Gamma_m,F)$.
\end{lemma}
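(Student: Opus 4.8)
The plan is to identify both sides with spaces of global sections on $X_{m/F}$ and to connect them through the Kodaira--Spencer isomorphism. On the automorphic side we already have, from \eqref{hol-dif} and the definition of $S_k(\Gamma_m,F)$, the identification $S_k(\Gamma_m,F)\simeq H^0(X_{m/F},\Omega^{\otimes k/2})$. On the Katz side, the isomorphism recorded just above the lemma, applied with $R=F$ (a characteristic-zero $\Z_{(p)}$-algebra via the fixed embedding), so that the universal object lives on $\mathcal{X}_m\otimes_{\Z_{(p)}}F=X_{m/F}$, yields $S_k^\Katz(\Gamma_m,F)\simeq H^0\bigl(X_{m/F},\underline{\omega}_{\mathcal{A}_m}^{\otimes k}\bigr)$, where $\underline{\omega}_{\mathcal{A}_m}=e\pi_*\Omega_{\mathcal{A}_m/X_m}$ is the invertible sheaf attached to the universal QM abelian surface. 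Thus it suffices to produce a canonical isomorphism of invertible sheaves $\underline{\omega}_{\mathcal{A}_m}^{\otimes k}\simeq\Omega_{X_{m/F}}^{\otimes k/2}$ and to check that, after base change to $\C$, it recovers the classical identification $f\mapsto f(z)\,dz^{\otimes k/2}$ underlying \eqref{hol-dif}.

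The key input is the Kodaira--Spencer isomorphism for the universal QM abelian surface. The Gauss--Manin connection produces a Kodaira--Spencer map out of $\Sym^2$ of the Hodge bundle $\pi_*\Omega_{\mathcal{A}_m/X_m}$; because $\mathcal{A}_m$ carries an $\mathcal{O}_B$-action, this bundle decomposes as $\pi_*\Omega_{\mathcal{A}_m/X_m}=\underline{\omega}_{\mathcal{A}_m}\oplus(1-e)\pi_*\Omega_{\mathcal{A}_m/X_m}$, and the element $w$ with $i_p(w)=\smallmat 0110$ identifies the two summands. Using the principal polarization (equivalently, the Rosati involution $\dagger$ of \S\ref{sec-QMAS}) to pair $\underline{\omega}_{\mathcal{A}_m}$ against $\mathrm{Lie}$ of the surface, the Kodaira--Spencer map restricts to a canonical isomorphism
\[\mathrm{KS}\colon\underline{\omega}_{\mathcal{A}_m}^{\otimes 2}\overset\sim\longrightarrow\Omega^1_{X_{m/F}},\]
defined over the base, hence over $F$. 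Since $X_m$ is a \emph{compact} Shimura curve there are no cusps, so no boundary twist intervenes and $\mathrm{KS}$ is an honest isomorphism of line bundles; this is the quaternionic analogue of the classical fact for modular curves and is established in the references (see \cite[\S3]{Brooks}, and \cite{Mori1, Kassaei}).

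Granting $\mathrm{KS}$, the proof concludes formally. As $k$ is even, raising $\mathrm{KS}$ to the $(k/2)$-th tensor power gives $\underline{\omega}_{\mathcal{A}_m}^{\otimes k}\simeq\Omega_{X_{m/F}}^{\otimes k/2}$, whence, taking global sections,
\[S_k^\Katz(\Gamma_m,F)\simeq H^0\bigl(X_{m/F},\underline{\omega}_{\mathcal{A}_m}^{\otimes k}\bigr)\simeq H^0\bigl(X_{m/F},\Omega^{\otimes k/2}\bigr)\simeq S_k(\Gamma_m,F).\]
The composite is $F$-linear and, being built from $\mathrm{KS}$, is canonical and compatible with extension of scalars $F\subseteq\C$. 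The remaining verification is that over $\C$ this map agrees with \eqref{hol-dif}: on the universal surface over $\mathcal{H}$ one computes that $\mathrm{KS}$ sends the square of the distinguished differential to $dz$, so a Katz form corresponds to the classical $f(z)\,dz^{\otimes k/2}$. I expect the \emph{main obstacle} to be precisely the construction and normalization of $\mathrm{KS}$ in the quaternionic setting: one must verify that the a priori $\Sym^2$-valued Kodaira--Spencer map, once cut by the idempotent $e$ and trivialized using $w$ and the polarization, lands in $\underline{\omega}_{\mathcal{A}_m}^{\otimes 2}$ and is an isomorphism, and that its complex realization matches the automorphic normalization. All other steps are formal manipulations of line bundles and global sections.
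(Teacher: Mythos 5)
Your proposal is correct and follows essentially the same route as the paper: the paper's proof consists precisely of invoking the Kodaira--Spencer isomorphism of line bundles $\underline{\omega}_{\mathcal{A}_m/F}^{\otimes 2}\simeq \Omega^1_{X_{m}/F}$ (citing \cite[Theorem 2.5]{Mori}) and then taking tensor powers and global sections, exactly as you do. Your additional discussion of the construction of the Kodaira--Spencer map and of the compatibility with the complex normalization \eqref{hol-dif} is a more detailed unwinding of what the paper delegates to the cited reference, not a different argument.
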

\begin{proof} The result follows 
from the isomorphism of line bundles $\underline{\omega}_{\mathcal{A}_m/F}^{\otimes 2}\simeq \Omega_{X_m/F}^1$ given by the Kodaira--Spencer map (see \cite[Theorem 2.5]{Mori}). 
\end{proof}

In light of Lemma \ref{lemmamodfors}, for any $\Z_{(p)}$-algebra $R$ we set \[S_k(\Gamma_m,R)\defeq S_k^\Katz(\Gamma_m,R).\] In what follows we are mostly interested in the cases where $R$ is (the ring of integers of) a subfield of $\C$ or (the valuation ring of) a $p$-adic field.}

\subsection{Power series expansions}\label{sec-powerseriesexp}
To study integrality conditions on modular forms we will combine the Katz interpretation of modular forms with the theory of Serre--Tate coordinates, which leads to a notion of power series expansions for quaternionic modular forms, following \cite{Mori2, Mori, Mori1} (see also \cite{Brako, Brooks}). We let $\Z_p^\unr=W(\overline{\F}_p)$ be the ring of Witt vectors of the (separable) algebraic closure of $\F_p$.

We first discuss Serre--Tate coordinates and deformation theory in the quaternionic setting, developed in \cite[\S3]{Mori} and \cite[\S4]{Brooks}, following ideas of Serre--Tate and Katz \cite{Kat}. Let $A$ be an ordinary QM abelian surface over $\overline{\F}_p$; denote by $A^\vee$ the dual abelian variety and by $\Ta_p(A)$ and $\Ta_p(A^\vee)$ their $p$-adic Tate modules. Recall that for any deformation $\mathcal{A}$ over an Artinian ring $R$ with residue field $\overline{\F}_p$, the Weil pairing induces a $\Z_p$-bilinear form
\[q_{\mathcal{A}}\colon \Ta_p(A) \times \Ta_p(A^\vee) \longrightarrow \widehat{\G}_m(R),\] 
where $\widehat{\G}_m$ is the formal multiplicative group scheme over $\overline{\F}_p$; thus, we have $\widehat{\G}_m({R})=1+\mathfrak{m}_{R}$, where $\mathfrak{m}_{R}$ is the maximal ideal of ${R}$. 
Choosing $\Z_p$-bases $\{x_1,x_2\}$ of $\Ta_p(A)$ and $\{y_1,y_2\}$ of $\Ta_p(A^\vee)$ and setting $T_{i,j}(\mathcal{A})=q_{\mathcal{A}}(x_i,y_j)-1$ gives the \emph{Serre--Tate coordinates} of the deformation $\mathcal{A}$. 
The deformation functor is pro-representable by a universal object $\mathcal{A}^\univ/\mathcal{R}^\univ$ (\cite[Theorem 2.1)]{Kat}). Serre--Tate coordinates are also defined for deformations $\mathcal{A}$ over complete Noetherian local rings $\mathcal{R}$ with maximal ideal $\mathfrak{m}_\mathcal{R}$ by setting 
$q_{\mathcal{A}}=\invlim q_{\bar{\mathcal{A}}_n}$ and $T_{i,j}(\mathcal{A})=q_{\mathcal{A}}(x_i,y_j)-1$, where $\bar{\mathcal{A}}_n=\mathcal{A}\otimes_\mathcal{R}(\mathcal{R}/\mathfrak{m}_\mathcal{R}^n)$ is a deformation of $A$ over the Artinian ring $\mathcal{R}/\mathfrak{m}_\mathcal{R}^n$ and the limit is taken over $n$. 
The map $T_{i,j}\mapsto q_{\mathcal{A}^\univ}(x_i,y_j)-1=T_{i,j}(\mathcal{A}^\mathrm{univ})$ induces an isomorphism between the power series ring $\Z_p^\unr[[T_{i,j}]]$ in four variables $T_{i,j}$ with $i,j=1,2$ and the universal deformation ring $\mathcal{R}^\univ$; note that this isomorphism is non-canonical, depending on the choice of bases.   

Since $A$ is also equipped with QM, we can consider the subfunctor of the deformation functor of $A$ which sends an Artinian local ring $R$ with residue field $\overline{\F}_p$ to the set of deformations of $A$ as a QM abelian surface with level $V_1(M)$ structure; this subfunctor is pro-representable by a ring $\mathcal{R}^\univ_\mathrm{QM}$ which is isomorphic (again, non-canonically) to $\Z_p^\unr[[T]]$ (see \cite[Proposition 3.3]{Mori} and \cite[Proposition 4.5]{Brooks}). The isomorphism is again obtained by using Serre--Tate coordinates for QM deformations as follows. The Tate module $\Ta_p(A)$ of $A$ inherits an action of $\mathcal{O}_B$ and hence of $\mathcal{O}_B \otimes \Z_p$; the idempotent $e$ induces a splitting 
\[\Ta_p(A) = \ker(e)\oplus e\Ta_p(A) \] 
and we can find a $\Z_p$-basis $\left\lbrace x_1,x_2\right\rbrace $ of $\Ta_p(A)$ such that $ex_1 = x_1$ and $ex_2 = 0$. If we let $x_1^\vee=\theta_A(x_1)$, then the isomorphism $\Z_p^\unr[[T]]\simeq\mathcal{R}^\univ_\mathrm{QM}$ is given by the map $T\mapsto T_{\mathcal{A}^\univ}(x_1,x_1^\vee)$. 

We now introduce power series expansions of modular forms by using Serre--Tate coordinates. Fix once and for all 
a point \begin{equation}\label{point-fix}
{x}=(\bar{x},\beta)\in \widehat{\text{Ig}}(\overline{\F}_p)\end{equation} in the Igusa tower, \emph{i.e.}, the isomorphism class of a quadruple $(A, \iota, \alpha,\beta)$, lying above a point $\bar{x} =(A, \iota, \alpha)$ in $\mathbb{X}^{\ord}_0(\overline{\F}_p)$ and equipped with an arithmetic trivialization $\beta\colon \mu_{p^\infty}\simeq eA[p^\infty]^0$. 
For each integer $m\geq 1$, denote $x_m\in \mathrm{Ig}_m$ the 
image of $x$ via the canonical projection. The trivialization $\beta$ determines by Cartier duality (see \cite[\S 3.1]{Magrone}) a point \[x_{\beta}^\vee\in e^\dagger{\Ta_p(A^\vee)({\overline{\F}_p})}.\]  
Take \[x_\beta\defeq\theta_{A^\vee}(x_\beta^\vee) \in e\Ta_p(A)(\overline{\F}_p),\] where $\theta_{A^\vee}$ is the dual of $\theta_A$. 

Let $\mathcal{R}^\univ_x=\mathcal{R}^\univ_{\mathrm{QM},x}$ be the QM universal deformation ring of the abelian variety $A$. We fix the Serre--Tate coordinates around $\bar{x}$ to be those associated with the choice of $x_\beta$, \emph{i.e.} we set \[T_x(\mathcal{A})=q_{\mathcal{A}}(x_\beta,x_\beta^\vee)-1\] for each deformation of QM abelian surfaces $\mathcal{A}$ of $A$. As before, denote by $\mathcal{A}^\univ_x/\mathcal{R}^\univ_x$ the universal object; so  $\mathcal{A}^\univ_x$ is a QM abelian variety over $\mathcal{R}_x^\univ \cong \Z_p^\unr[[T_x]]$, equipped with a quaternionic action $\iota_x^\univ$. The level $V_1(M)$ structure $\alpha$ can be uniquely lifted to a level $V_1(M)$ structure $\alpha^\univ_x$ on $\mathcal{A}^\univ_x$ by \'etaleness because $p\nmid M$. Since deformation theory of ordinary abelian varieties over $\overline{\F}_p$ is equivalent to deformation theory of their $p$-divisible groups (see \cite[Theorem 1.2.1]{KatzST}), the trivialization $\beta$ can also be lifted to a trivialization $\beta_x^\univ$ of $\mathcal{A}_x^\univ$, and this defines a Drinfeld level $\Gamma_1(p^m)$ structure $\beta_{x,m}^\univ$ for each $m\geq 0$.
So the  quadruplet 
\[T_{\mathcal{A}_x^\univ}=(\mathcal{A}_x^\univ,\iota_x^\univ,\alpha_x^\univ,\beta_x^\univ)\] gives rise to a universal test object over $\Z_p^\unr[[T_x]]$, by which we mean 
that the quadruplet $(\mathcal{A}_x^\univ,\iota_x^\univ,\alpha_x^\univ,\beta_{x,m}^\univ)$
is a test object for $\Gamma_m$, for each $m\geq 0$. For each $\Z_p^\unr$-algebra $R$, base change to $R^\univ$ gives rise to a universal test object over $R^\mathrm{univ}$, denoted with the same symbol to simpify the notation.
We have an isomorphism ${\Ta_p(A^\vee)({\overline{\F}_p})}\simeq \Hom(\widehat{\mathcal{A}}^\univ,\widehat{\mathbb{G}}_m)$; the point $x_{\beta}^\vee$ determines a homomorphism $\varphi_\beta$ and we can define $\omega_{\mathcal{A}_x^\univ}=\varphi_\beta^*(dT/T)$, where $dT/T$ is the standard differential on the formal multiplicative group. 

\begin{definition}\label{q-expansion}
Let $R$ be a $\Z_p^\unr$-algebra, $f\in S_k(\Gamma_m,R)$ and $x=(\bar{x},\beta)\in \widehat{\text{Ig}}(\overline{\F}_p)$. The formal series $f(T_x) = f(T_{\mathcal{A}_x^\univ},\omega_{\mathcal{A}_x^\univ})$ in $R[[T_x]]$, is the \emph{$T_x$-expansion of $f$}. \end{definition}
Thus $f(T_x)$ 
denotes the value of $f$ at 
$(\mathcal{A}_x^\univ,\iota_x^\univ,\alpha_x^\univ,\beta_{x,m}^\univ,\omega_{\mathcal{A}_x^\univ})$. The reader is referred to \cite[\S2.2]{Mori1} for more details on this definition. 

\begin{remark}
We usually distinguish between modular forms and associated power series by writing $f$ for the first and $f(T_x)$ for the second. 
\end{remark}

We also recall the following fact. The operator $v_p$ introduced in \S\ref{stabilization} corresponds to the operator $V$ in \cite[\S3.6]{Brooks} on Katz modular forms; to uniformize the notation with \emph{loc. cit.} we also let $U=U_p$ be the corresponding operator. The action of the operators $U$ and $V$ on power series has been studied in \cite[\S4.4]{Brooks}; in particular, by \cite[Proposition 4.17]{Brooks}, we know that if $f(T_x)\in \Z_p^\unr[[T_x]]$, then $f^{[p]}(T_x)\in\Z_p^\unr[[T_x]]$ as well, and if $f(T_{x})= \sum_{n\geq0}a_nT_{x}^n$, then $f^{[p]}(T_{x})=\sum_{p\nmid n}a_nT_{x}^n$ (see also \cite[Lemma 5.2]{Burungale}).
 
\subsection{Integral modular forms}
We now use power series expansion to introduce a new integrality condition on modular forms. We work throughout with a field extension $F/\Q_p$ which contains the maximal unramified extension of $\Q_p$, and let $\mathcal{O}$ be its valuation ring. Recall the point $x=(A,\iota,\alpha,\beta)=(\bar{x},\beta)\in\widehat{\mathrm{Ig}}$ fixed in \eqref{point-fix}, 
recall that $x_m\in\mathrm{Ig}_m$ denotes the image of $x$ via the canonical projection. 
We suppose from now on that the abelian variety $A$ has a smmoth model $\widetilde{A}$ 
defined over $\mathcal{O}$, which means that the abelian scheme $\widetilde{A}\rightarrow \Spec(\mathcal{O})$ is 
smooth and, if $\mathfrak{m}$ is the maximal ideal of 
$\mathcal{O}$ and $k=\mathcal{O}/\mathfrak{m}$ is its residue field, 
then $A=\widetilde{A}\otimes_\mathcal{O}k$ (this is the case 
of CM points used in \cite{Mori}, \cite{Mori1} and in the subesequent paper \cite{LMW}). 
Since $\widetilde{A}$ is a deformation of $A$, there 
is a map $\mathcal{R}_x^\mathrm{univ}\rightarrow\mathcal{O}$ 
such that $\widetilde{A}=\mathcal{A}_x^\mathrm{univ}\otimes_{\mathcal{R}_x^\mathrm{univ}}\mathcal{O}$, and together with the images of the polarization and the level structures 
we obtain a sequence of points $\tilde{x}_m=(\widetilde{A},\tilde\iota,\tilde\alpha,\tilde\beta_m)$
with $\tilde{x}_m\in \mathcal{X}_m^\mathrm{arith}(\mathcal{O})\subseteq\mathcal{X}_m^\mathrm{sm}(\mathcal{O})$ for each $m\geq 0$, 
compatible with respect to the covering maps $X_m\rightarrow X_{m-1}$ for all $m\geq 1$
(recall that $(\widetilde{A},\tilde{\iota},\tilde{\alpha})\in \mathbb{X}_0^{\mathrm{ord}}(\overline{\F}_p)$, $\tilde{\beta}_m$ is an arithmetic trivialization, 
$\mathcal{X}_m^\mathrm{sm}$ denotes the smooth locus of $\mathcal{X}_m$, 
and $\mathcal{X}_m^\mathrm{arith}$ was introduced in \S\ref{secspecialfibers}). 
By smoothness,  
$\Spec(\mathcal{R}_x^\mathrm{univ})$ 
is isomorphic to the formal completion $\widehat{\mathcal{O}}_{\mathcal{X}_m,x_m}$ 
of $\mathcal{O}_{\mathcal{X}_m}$ at $x_m$. See \cite[\S2.2]{Mori1} for a more complete discussion. 

For $f\in S_k(\Gamma_m,F)$, we may then consider $f(T_{x})\in F[[T_{x}]]$. 

\begin{definition}
Define the \emph{Serre--Tate $\mathcal{O}$-valued modular forms} $S_k^\ST(\Gamma_m,\mathcal{O})$ to be the $\mathcal{O}$-submodule of $S_k(\Gamma_m,F)$ consisting of those $f$ such that $f(T_x)$ belongs to $\mathcal{O}\pwseries{T_x}$.   
\end{definition}

Using the compatibility under base change, we see that  
\begin{equation}\label{KatzinST}
S_k(\Gamma_m,\mathcal{O})\subseteq S_k^\ST(\Gamma_m,\mathcal{O}).
\end{equation}
The canonical covering map ${X}_{m/F}\twoheadrightarrow X_{m-1/F}^{(m)}$ induces a canonical injective map by pull-back
\[S_k(\Gamma_{m-1}^{(m)},F)\longmono S_k(\Gamma_{m},F),\] and we can define 
\[S_k^\ST(\Gamma_{m-1}^{(m)},\mathcal{O})=S_k(\Gamma_{m-1}^{(m)},F)\cap S_k^\ST(\Gamma_m,\mathcal{O}).\] 
We also note the following result concerning integrality conditions. Recall the system of representatives $\{\sigma_a\}$ of $\Gamma_{m-1}^{(m)}/\Gamma_m\simeq \Gamma_{m-1,p}^{(m)}/\Gamma_{m,p}$ chosen in \S\ref{sectionAL} so that $\sigma_a$ is sent to the matrix $\smallmat a{*}0{a^{-1}}$ with $a\in U^{(m-1)}/U^{(m)}$.

\begin{lemma}\label{LemmaKatz1}
If $f\in S_k^\ST(\Gamma_m,\mathcal{O})$ then $f|\sigma_a\in S_k^\ST(\Gamma_m,\mathcal{O})$.
\end{lemma}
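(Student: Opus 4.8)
The plan is to show that the operator $\sigma_a$ preserves the integrality condition defining $S_k^\ST(\Gamma_m,\mathcal{O})$, namely that $f(T_x)\in\mathcal{O}\pwseries{T_x}$. The key point is to understand the effect of $f\mapsto f|\sigma_a$ on the $T_x$-expansion. Recall from \S\ref{sectionAL} that under the isomorphism $\Gamma_{m-1}^{(m)}/\Gamma_m\simeq\Gamma_{m-1,p}^{(m)}/\Gamma_{m,p}$, the representative $\sigma_a$ corresponds to a matrix $\smallmat a{*}0{a^{-1}}$ with $a\in U^{(m-1)}/U^{(m)}=(1+p^{m-1}\Z_p)/(1+p^m\Z_p)$; in particular $a$ is a $p$-adic unit, and hence so is $\det(\sigma_a)=\Norm(\sigma_a)$. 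First I would translate the action of $\sigma_a$ at the level of test objects: since $\sigma_a$ lies in $\Gamma_{m-1,p}^{(m)}$, acting on the Igusa-tower point $x=(\bar x,\beta)$ it preserves the underlying QM abelian surface $(A,\iota)$ and the level $V_1(M)$ structure $\alpha$, modifying only the trivialization $\beta$ (equivalently, the point $x_\beta^\vee$ and hence the Serre--Tate coordinate $T_x$) through the diamond-type diagonal action by the unit $a$.

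The main computation is then to express $(f|\sigma_a)(T_x)$ in terms of the coordinate change induced on the Serre--Tate parameter. Because $\sigma_a$ acts on the connected part $eA[p^\infty]^0$ through multiplication by the unit $a$ (this is the content of its $p$-component being $\smallmat a{*}0{a^{-1}}$, with the diagonal entry $a$ governing the $e$-component where the trivialization lives), the new Serre--Tate parameter $T_x'$ attached to the transformed trivialization satisfies $1+T_x'=(1+T_x)^{u}$ for the unit $u\in\Z_p^\times$ determined by $a$; equivalently the substitution is the automorphism of $\Z_p^\unr\pwseries{T_x}$ sending $1+T_x\mapsto(1+T_x)^u$. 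I would then observe that the weight factor $\det(\sigma_a)^{k/2}$ appearing in the slash action \eqref{MFhol} is a $p$-adic unit, and that the substitution $1+T_x\mapsto(1+T_x)^u$ with $u\in\Z_p^\times$ maps $\mathcal{O}\pwseries{T_x}$ to itself (it is a well-defined continuous $\mathcal{O}$-algebra automorphism, since $(1+T_x)^u=\sum_{n\ge0}\binom{u}{n}T_x^n$ has $\binom un\in\Z_p$). Combining these two facts, if $f(T_x)\in\mathcal{O}\pwseries{T_x}$ then $(f|\sigma_a)(T_x)$ is obtained from $f(T_x)$ by multiplying by a unit and applying an integrality-preserving substitution, hence lies in $\mathcal{O}\pwseries{T_x}$.

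The step I expect to be the main obstacle is pinning down precisely the coordinate change $1+T_x\mapsto(1+T_x)^u$ and the exact unit $u$, i.e.\ verifying cleanly how the diagonal action of $a$ on the trivialization $\beta$ (and on the dual point $x_\beta^\vee$ entering $T_x(\mathcal A)=q_{\mathcal A}(x_\beta,x_\beta^\vee)-1$) transforms the Serre--Tate coordinate. This requires tracing through the definition of $T_x$ via the Weil-pairing form $q_{\mathcal A}$ together with Cartier duality for $\beta$, and checking the compatibility of the $e$-component of the $\smallmat a{*}0{a^{-1}}$-action with the pairing; the bilinearity of $q_{\mathcal A}$ is what converts the scaling of the basis vector $x_\beta$ by $a$ into exponentiation of $1+T_x$. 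Once that explicit change of variables is established, the remaining assertions---that the weight normalizing factor and the substitution both preserve $\mathcal{O}\pwseries{T_x}$---are routine, since everything in sight is integral over $\mathcal{O}\supseteq\Z_p^\unr$.
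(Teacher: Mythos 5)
Your proposal is correct and follows essentially the same route as the paper: identify the action of $\sigma_a$ as a change of the trivialization $\beta\mapsto\beta_a$ over the same reduction $\bar{x}$, and conclude that the induced change of Serre--Tate coordinate preserves $\mathcal{O}$-integrality. The only difference is that the step you flag as the main obstacle --- pinning down the explicit coordinate change $1+T_x\mapsto(1+T_x)^u$ via bilinearity of $q_{\mathcal{A}}$ --- is short-circuited in the paper's proof, which simply observes that any two trivializations yield isomorphisms of the \emph{same} universal deformation ring with $\Z_p^\unr[[T]]$, both defined over $\Z_p^\unr$ and equivariant under base change to $\mathcal{O}$, so the transition between the two coordinates is automatically an integrality-preserving automorphism without needing its explicit form (and note, incidentally, that $\det(\sigma_a)=1$ exactly, since $\sigma_a$ lies in the norm-one group $(R_m^\times)_1$, so the weight factor you mention is trivial).
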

\begin{proof} If ${x}=(\bar{x},\beta)\in \widehat{\text{Ig}}(\overline{\F}_p)$ is the fixed point in the Igusa tower, then the action of $\sigma_a$ corresponds to the choice of a different trivialization $\beta_a$, and therefore we need to look at the Serre--Tate expansion at the point $x_a=(\bar{x},\beta_a)\in \widehat{\text{Ig}}(\overline{\F}_p)$; however, different choices of trivializations gives rise to different isomorphisms of the universal deformation space of the QM abelian surface $A$ corresponding to $\bar{x}$, with the same power series ring $\Z_p^\unr[[X]]$. Since $f(T_x)\in \mathcal{O}[[T_x]]$ and the isomorphisms above are equivariant with respect to base change to $\mathcal{O}$, then $f|\sigma_a(T_x)=f(T_{x_a})$ also belongs to $\mathcal{O}[[T_x]]$. 
\end{proof}

We now look more closely to the case of weight $k=2$. Let $J_m$ be the Jacobian variety of $X_m$. Let $\mathcal{J}_{m/\mathcal{O}}$ be the N\'eron model of $J_{m/F}\defeq J_m\otimes_\Q F$; then we have a canonical isomorphism of $F$-schemes $\mathcal{J}_{m/\mathcal{O}}\otimes_{\mathcal{O}}F\simeq J_{m/F}$. Let $\mathcal{J}_{m/\mathcal{O}}^0$ be the connected component of the identity of $\mathcal{J}_{m/\mathcal{O}}$ (\cite[\S1b)]{Mazur-Rational}). 
We then have an injective map of $\mathcal{O}$-modules $\Cot(\mathcal{J}_{m/\mathcal{O}}^0)\hookrightarrow\Cot(J_{m/F})$ on the level of cotangent spaces and a canonical isomorphism of $F$-vector spaces \[\Cot(J_{m/F})\simeq H^0(X_{m/F},\Omega^1_{X_{m/F}}).\] Composing these maps gives an injective map of $\mathcal{O}$-modules 
\begin{equation}\label{Cot-MF}
\Cot(\mathcal{J}_{m/\mathcal{O}}^0)\longmono S_2(\Gamma_m,F).\end{equation}
\begin{lemma}\label{lemma_Cot-MF}
    The image of \eqref{Cot-MF} is contained in $S_2(\Gamma_m,\mathcal{O})$.
\end{lemma}
\begin{proof}
   Fix an embedding 
    $X_m\hookrightarrow J_m$ over $\Q$; in the modular curves case, 
    this is obtained by fixing a cusp, while in this case it
    is usually constructed as a multiple of the map which takes a point
    $x\in X_m$ to the divisor $[x]-[\xi]$, where $\xi$ is the 
    \emph{Hodge class} of $X_m$, \emph{i.e} the unique class in $\mathrm{Pic}(X_m)\otimes_\Z\Q$ such that 
    the Hecke operator $T_\ell$ acts on $\xi$ as multiplication by $\ell+1$, 
    for all primes $\ell\nmid MD$; the reader is referred to \cite[page 30]{Zhang-Annals}, \cite[page 187]{Zhang-AJM} for details. If $\mathcal{X}_m^\mathrm{sm}$ denotes as before the smooth locus of 
    $\mathcal{X}_m$, the universal property of N\'eron models 
    shows that there exists a map $\mathcal{X}_m^\mathrm{sm}\rightarrow\mathcal{J}_m$, 
    defined over $\mathcal{O}$. 
    The $T_x$-expansion of the image of an element $\omega\in \Cot(\mathcal{J}_{m/\mathcal{O}}^0)$ 
    via the map 
    \eqref{Cot-MF} is then the pull-back of $\omega$ via the map 
\[\Spec(\mathcal{R}_x^\mathrm{univ})\simeq\Spec(\widehat{\mathcal{O}}_{\mathcal{X}_{m},x_m})\longrightarrow
\mathcal{X}_m^\mathrm{sm}\longrightarrow\mathcal{J}_m,\] 
hence it belongs to $S_2(\Gamma_m,\mathcal{O})$. 
\end{proof}

Define finally 
\[S_k^*(\Gamma_m,\mathcal{O})=\{f\in S_k(\Gamma_m,F): f|\tau_m\in S_k^\ST(\Gamma_m,\mathcal{O})\},\]
\[S_k'(\Gamma_m,\mathcal{O})= S_k^\ST(\Gamma_m,\mathcal{O})\cap S_k^*(\Gamma_m,\mathcal{O}). \]

\begin{remark}\label{propGross} Suppose that $\mathcal{O}\supseteq\Z_p[\zeta_{p}]$. Then the image of \eqref{Cot-MF} is equal to $S_2'(\Gamma_1,\mathcal{O})$. 
The proof of this fact is adapted from \cite[Proposition 8.4]{Gross-Tameness}. By Lemma \ref{lemma_Cot-MF} and the fact that $\tau_1$
preserves $H^0(\mathcal{X}_{1/\mathcal{O}},\Omega^1)$, 
the 
image of \eqref{KatzinST} is contained in $S_2'(\Gamma_1,\mathcal{O})$. We prove the opposite inclusion: for that, fix $\omega$ a regular differential on $X_1$ which belongs to $S_2'(\Gamma_1,\mathcal{O})$. Let $\widetilde{\mathcal{X}}_1$ be a minimal regular resolution of $\mathcal{X}_{1/\mathcal{O}}^\reg$. 
Then we have the following isomorphisms (\emph{cf.} \cite[\S3.4]{Ohta-ES})
\[H^0(\mathcal{X}_1^\reg,\Omega^1)\simeq H^0(\widetilde{\mathcal{X}}_1,\Omega^1)\simeq\Cot(\mathcal{J}_1).\]
The first isomorphism follows from the discussion of \cite[II, 3]{Mazur-Eisenstein} and \cite[\S6]{Wiles1}, while the second follows from the discussion in \cite[\S2(e)]{Mazur-Rational}; note that both results can be applied to the case of Shimura curves because $\mathcal{X}_1^\reg$ is Cohen--Macaulay, purely of relative dimension $1$, thanks to \cite{Buz}. Now recall that $\tau_1$ induces an automorphism of $\mathcal{X}_1^\reg$ which interchanges the two irreducible components of the special fiber.
Using the isomorphism $H^0(X_{1/F},\Omega^1)\simeq H^0(X_{1/F}^\reg,\Omega^1)$, we can see $\omega$ as a meromorphic section in $H^0(\mathcal{X}_{1},\Omega^1)$, and hence a meromorphic section in $H^0(\mathcal{X}_{1}^\reg,\Omega^1)$. 
The divisor $D$ where $\omega$ is not regular must be contained in the special fiber of $\widetilde{\mathcal{X}}_{1}$; however, the divisor $D$ has trivial intersection with both the irreducible components of the special fiber $\mathbb{X}_{1}^\mathrm{reg}$ of $\mathcal{X}_1^\reg$, and the result follows. 
\end{remark}
 
\subsection{Inverse limit of modular forms} 
We assume as before that $F$ is a subfield of $\C_p$ which contains the maximal unramified extension of $\Q_p$, and let $\mathcal{O}$ be its valuation ring. 

The trace maps introduced in \S\ref{secHeckeop} can be extended to higher weight and reinterpreted in modular terms as follows:

\begin{itemize}
\item $\mathrm{Tr}_m^{(m)}\colon S_k(\Gamma_m,F)\rightarrow S_k(\Gamma_{m-1}^{(m)},F)$ is given by $\mathrm{Tr}_m^{(m)}(f)(T)=\sum_i f(T_i)$ where, given $T=(A,\iota,\alpha,(P,H),\omega)$ a test object for $\Gamma_{m-1}^{(m)}$, $T_i=(A,\iota,\alpha,(P_i,H),\omega)$ are all test objects for $\Gamma_m$ such that $P=pP_i$;

\item $\mathrm{Tr}_{m-1}^{(m)}\colon S_k(\Gamma_{m-1}^{(m)},F)\rightarrow S_k(\Gamma_{m-1},F)$ is given by $\mathrm{Tr}_{m-1}^{(m)}(f)(T)=\sum_i f(T_i)$ where, given a test object $T=(A,\iota,\alpha,(P,H),\omega)$ for $\Gamma_{m-1}$, $T_i=(A,\iota,\alpha,(P,H_i),\omega)$ are all test objects for $\Gamma_{m-1}^{(m)}$ such that $H_i$ are all finite flat subgroups of rank $p^m$ with $P\in H_i$;

\item $\mathrm{Tr}_m\colon  S_k(\Gamma_m,F)\rightarrow S_k(\Gamma_{m-1},F)$ is given by $\mathrm{Tr}_m=\mathrm{Tr}_{m-1}^{(m)}\circ \mathrm{Tr}_m^{(m)}$ which 
takes $f$ to $\mathrm{Tr}_m(f)(T)=\sum_if(T_i)$ where, given a test object $T=(A,\iota,\alpha,(P,H),\omega)$ for $\Gamma_{m-1}$, $T_i=(A,\iota,\alpha,(P_i,H_i),\omega)$ is a test object for $\Gamma_{m}$ with $pP_i=P$ and $pH_i=H$.
\end{itemize}

\begin{definition}\label{DEFINT} Let $\mathfrak{S}_{k,m}^*$ be the $\mathcal{O}$-submodule of $S_k^*(\Gamma_m,\mathcal{O})$ consisting of those $f$ such that $f|\tau_m| U_p^m\in S_k^\ST(\Gamma_m,\mathcal{O})$. Let $\mathfrak{S}_{k,\infty}^* $ denote the $\mathcal{O}$-submodule of $\invlim S_k(\Gamma_m,F)$ (inverse limit with respect to the trace maps $\mathrm{Tr}_m$ for $m\geq 1$) consisting of those $(f_m)_{m\geq 1}$ with $f_m\in \mathfrak{S}_{k,m}^*$. 
\end{definition}

For $k=2$, the case we are especially interested in, we simply drop the weight from the notation in the definition above writing $\mathfrak{S}^*_{m}$ for $\mathfrak{S}^*_{2,m}$ and $\mathfrak{S}^*_{\infty}$ for $\mathfrak{S}^*_{2,\infty}$.

{
We give a description of some elements in $\mathfrak{S}_{k,m}^*$. 

\begin{proposition}\label{propinteg}
Let $S$ be a $\mathcal{O}$-submodule of $S^*_k(\Gamma_m,\mathcal{O})$ which is $U_p^*$-stable. Then 
$S\subseteq\mathfrak{S}_{k,m}^*$. 
\end{proposition}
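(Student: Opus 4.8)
The plan is to unwind the definition of $\mathfrak{S}_{k,m}^*$. Since $S\subseteq S_k^*(\Gamma_m,\mathcal{O})$ by hypothesis, every $f\in S$ already lies in $S_k^*(\Gamma_m,\mathcal{O})$, so to conclude $f\in\mathfrak{S}_{k,m}^*$ it suffices to verify the single remaining condition $f|\tau_m|U_p^m\in S_k^\ST(\Gamma_m,\mathcal{O})$. The strategy is to commute the Atkin--Lehner element $\tau_m$ past all $m$ copies of $U_p$ using Lemma \ref{Ohtaformulas}(1), and then exploit the $U_p^*$-stability of $S$ together with the defining property of $S_k^*(\Gamma_m,\mathcal{O})$.

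First I would record that $T\mapsto T^*$ is an involution on the Hecke algebra: writing $U_p=T(\alpha_p)$ and $\alpha^\iota=\Norm(\alpha)\alpha^{-1}$, a direct computation with the reduced norm (using that $\Norm(\alpha)$ is a scalar, so $\Norm(\alpha^\iota)=\Norm(\alpha)$) gives $(\alpha^\iota)^\iota=\alpha$, whence $(U_p^*)^*=U_p$. With this in hand, Lemma \ref{Ohtaformulas}(1) applied to the operator $T=U_p^*$ (so that $T^*=U_p$) reads $f|\tau_m|U_p=f|U_p^*|\tau_m$. Iterating this identity $m$ times, peeling off one factor $U_p$ at each step, yields
\[
f|\tau_m|U_p^m=(f|(U_p^*)^m)|\tau_m .
\]

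To finish, I would use that $S$ is $U_p^*$-stable, so $g\defeq f|(U_p^*)^m$ again lies in $S\subseteq S_k^*(\Gamma_m,\mathcal{O})$. By the very definition of $S_k^*(\Gamma_m,\mathcal{O})$, membership there means exactly that $g|\tau_m\in S_k^\ST(\Gamma_m,\mathcal{O})$; but $g|\tau_m=f|\tau_m|U_p^m$ by the commutation step, which is precisely the extra condition defining $\mathfrak{S}_{k,m}^*$. Hence $f\in\mathfrak{S}_{k,m}^*$, and since $f\in S$ was arbitrary, $S\subseteq\mathfrak{S}_{k,m}^*$.

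The argument is short, so there is no serious obstacle; the only point demanding care is the commutation step. One must apply Lemma \ref{Ohtaformulas}(1) with the correct operator (namely $U_p^*$, not $U_p$), which is exactly where the involution identity $(U_p^*)^*=U_p$ is used, and then track the bookkeeping of the repeated commutation so that $U_p^m$ is converted into $(U_p^*)^m$ on the other side of $\tau_m$. I expect this verification of the involution property and of the order of operators to be the subtlest part of an otherwise routine deduction.
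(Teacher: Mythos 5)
Your proof is correct and follows essentially the same route as the paper: reduce to checking $f|\tau_m|U_p^m\in S_k^\ST(\Gamma_m,\mathcal{O})$, use Lemma \ref{Ohtaformulas}(1) to trade powers of $U_p$ across $\tau_m$ for powers of $U_p^*$, then invoke $U_p^*$-stability of $S$ and the definition of $S_k^*(\Gamma_m,\mathcal{O})$. The only (immaterial) difference is bookkeeping: the paper writes the identity as $f|(U_p^*)^m=f|\tau_m|U_p^m|\tau_m$ and applies $\tau_m$ once more, implicitly using $\tau_m^2=\id$, whereas you push $\tau_m$ through directly via $(U_p^*)^*=U_p$, arriving at the same conclusion.
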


\begin{proof} Since $S\subseteq S_k^*(\Gamma_m,\mathcal{O})$, we only need to check that $f|\tau_m|U_p^m\in S_k^\ST(\Gamma_m,\mathcal{O})$ for each $f\in S$. 
Since $S$ is $U_p^*$-stable, we see that for each $f\in S$ we have $f|(U_p^*)^m=f|\tau_m|U_p^m|\tau_m\in S$. Since $S\subseteq S_k^*(\Gamma_m,\mathcal{O})$, then $(f|\tau_m|U_p^m|\tau_m)|\tau_m=f|\tau_m|U_p^m\in S_k^\ST(\Gamma_m,\mathcal{O})$. 
\end{proof}

\begin{remark}
The proof of Proposition \ref{propinteg} actually shows more, namely that if $f\in S\subseteq S^*_k(\Gamma_m,\mathcal{O})$ and $S$ is $U_p^*$-stable, then $f|\tau_m|U_p^n\in S$ for \emph{any} integer $n\geq 0$. 
\end{remark}

\begin{remark}\label{remohta}
It is not clear if $U_p^*$ stabilizes the whole of any of the $\mathcal{O}$-submodules $S_k^\ST(\Gamma_m,\mathcal{O})$, $S^*_k(\Gamma_m,\mathcal{O})$ or $S'_k(\Gamma_m,\mathcal{O})$. The general problem is that, even if explicit formulas are available for the action of the $U_p$ operator (like in, for example, \cite[\S4.4]{Brooks}), the space $S_k^\ST(\Gamma_m,\mathcal{O})$ seems difficult to study because the $U_p$ and $U_p^*$ actions involve Serre--Tate expansion at points in the Igusa tower which are \emph{different} from the one which is used to {define}  $S_k^\ST(\Gamma_m,\mathcal{O})$  itself. It might be possible that smaller submodules are needed to develop a theory analogue to that of Hida and Ohta. 
However, in the positive direction, note the following simple fact. 
    If $f\in S'_k(\Gamma_1,\Z_p^\unr)$, then in particular $f\in S_k^\ST(\Gamma_1,\Z_p^\unr)$, and it may be possible to show 
    that in this case $f|\tau_m|U_p\in S_k^\ST(\Gamma_1,\Z_p^\unr)$ as well, using an argument in 
    \cite[Proposition 4.17]{Brooks} (this argument involves the operator $V$, which is close to our involution $\tau_m$). 
    Therefore, one may indeed check that $S_k'(\Gamma_1,\Z_p^\unr)\subseteq \mathfrak{S}_{k,1}^*$. Finally, note that for $m=1$ and $k=2$,  
    $S_2'(\Gamma_1,\Z_p^\unr)$ maps to $S_2'(\Gamma_1,\mathcal{O})$ for any $\mathcal{O}\supseteq\Z_p^\unr[\zeta_p]$, 
    and this last $\mathcal{O}$-module is identified with $\Cot(\mathcal{J}^0_{1/\mathcal{O}})$ by Remark \ref{propGross}. In the following, the case of 
    cotangent spaces is actually the one we will mostly be focused in, but it would be interesting to develop a more general theory of integrality using Serre--Tate expansions.   
\end{remark}
}


\subsection{Power series}\label{sec:measures}
{Generalizing the $\Gamma^{(m)}_{m-1}$ groups of \S\ref{sec.modforms}, define $\Gamma^{(m)}_j=\Gamma_j\cap(R^\times_m)_1$ with $1\le j\le m$ to be the subgroup of $\Gamma_j$ consisting of those $\gamma$ whose image in $\M_2(\Z_\ell)$ is congruent to $\smallmat{*}{*}{0}{*}$ modulo $\ell^{\val_\ell(Mp^m)}$ for all $\ell\nmid D$, and similarly for the local at $\ell$ counterparts. Under the strong approximation isomorphisms $\Gamma^{(m)}_j/\Gamma_m\simeq\Gamma^{(m)}_{j,p}/\Gamma_{m,p}$ we may fix a system of representatives $\{\sigma_a:a\in U^{(j)}/U^{(m)}\}$ such that $\sigma_a$ is sent to a matrix of the form $\smallmat{a}{*}{0}{a^{-1}}$.}

Fix an element 
$(f_m)_{m\geq 1}$ in $\mathfrak{S}_\infty^*$. 
Recall $U^{(m)}=1+p^m\Z_p$ for each $m\geq 1$, and let $\widehat{U^{(1)}/U^{(m)}}$ be the group of $\C_p^\times$-valued 
characters of the quotient group $U^{(1)}/U^{(m)}$. Define for each finite order character $\varepsilon\colon U^{(1)}/U^{(m)}\rightarrow \mathcal{O}^\times_\varepsilon$, where $\mathcal{O}_\varepsilon$ is the 
extension of $\mathcal{O}$ generated by the values of $\varepsilon$, the modular form
\begin{equation}\label{gm}
g_m(\varepsilon)=\sum_{a\in U^{(1)}/U^{(m)}}\varepsilon(a)\cdot f_m|\tau_m|U_p^m|\sigma_a^{-1}.\end{equation}
By definition $f_m|\tau_m|U_p^m\in S_2^\ST(\Gamma_m, \mathcal{O}_\varepsilon)$. 
Therefore, by Lemma \ref{LemmaKatz1}, $g_m(\varepsilon)\in S_2^\ST(\Gamma_m, \mathcal{O}_\varepsilon)$ 
and thus we may take its $T_x$-expansion 
$g_m(\varepsilon)(T_x)\in \mathcal{O}_\varepsilon[[T_x]].$ 

Let $\widehat{U}_{\mathrm{fin}}^{(1)}$ denote the subgroup of finite order characters 
of the $\overline{\Q}_p^\times$-valued character group of $U^{(1)}$. 
Define a function $\Phi\colon \widehat{U}_{\mathrm{fin}}^{(1)}\rightarrow \mathcal{O}_\varepsilon[[T_x]]$ by  
$\Phi(\varepsilon)=g_m(\varepsilon)(T_x)$
for \emph{any} $m$ such that $\varepsilon$ factors through $U^{(1)}/U^{(m)}$. To check the independence 
on the choice of $m$, note that
\[\begin{split}
  \sum_{a\in U^{(1)}/U^{(m+1)}}\varepsilon(a)\cdot f_{m+1}|\tau_{m+1}|U_p^{m+1}|\sigma_a^{-1}&=
  \sum_{b\in U^{(1)}/U^{(m)}}\varepsilon(b)\cdot\mathrm{Tr}_{m+1}(f_{m+1}|\tau_m|U_p^m|\sigma_b^{-1})\\
  &=\sum_{b\in U^{(1)}/U^{(m)}}\varepsilon(b)\cdot f_m|\tau_m|U_p^m|\sigma_b^{-1}
\end{split}\]where the last equation follows from Lemma \ref{lemmatrace}.

\begin{proposition}\label{propohta}
For each integer $n\geq 1$, the map which takes  
$\varepsilon$ to the $n$-th coefficient $a_n(\varepsilon)$ 
of the $T_x$-power series $\Phi(\varepsilon)= \sum_{n\geq 0}a_n(\varepsilon)T_x^n$ satisfies 
condition $(*)$ in \cite[page 70]{Ohta-ES}, \emph{i.e.}
\[\sum_{\varepsilon\in\widehat{U^{(1)}/U^{(m)}}}\varepsilon(\alpha)^{-1}a_n(\varepsilon)\in p^{m-1}\mathcal{O}\]
for all $m\geq 1$ and all $\alpha\in U^{(1)}$. 
\end{proposition}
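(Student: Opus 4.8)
The plan is to reduce the congruence to the orthogonality relations for the characters of the finite abelian group $G=U^{(1)}/U^{(m)}$, with the integrality coming entirely from Lemma \ref{LemmaKatz1}. First I would fix the integer $m$ in the statement, so that every $\varepsilon$ appearing in the sum factors through $U^{(1)}/U^{(m)}$ and, by the $m$-independence of $\Phi$ established just before the proposition, $a_n(\varepsilon)$ may be computed from $g_m(\varepsilon)$ for this common $m$. Set $h=f_m|\tau_m|U_p^m$; since $(f_m)_{m}\in\mathfrak{S}_\infty^*$ we have $h\in S_2^\ST(\Gamma_m,\mathcal{O})$. For each class $a\in U^{(1)}/U^{(m)}$ the element $\sigma_a^{-1}$ corresponds, exactly as $\sigma_a$ does in the proof of Lemma \ref{LemmaKatz1}, to a change of arithmetic trivialization at the fixed point of the Igusa tower; hence that lemma applies verbatim and yields $h|\sigma_a^{-1}\in S_2^\ST(\Gamma_m,\mathcal{O})$. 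Writing its $T_x$-expansion as $(h|\sigma_a^{-1})(T_x)=\sum_{n\ge 0}c_{n,a}T_x^n$, we therefore have $c_{n,a}\in\mathcal{O}$ for all $n$ and all $a$.

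Next I would expand the coefficients of $\Phi(\varepsilon)$. By $\mathcal{O}$-linearity of the $T_x$-expansion and the formula $g_m(\varepsilon)=\sum_{a}\varepsilon(a)\,h|\sigma_a^{-1}$, the $n$-th coefficient is
\[a_n(\varepsilon)=\sum_{a\in U^{(1)}/U^{(m)}}\varepsilon(a)\,c_{n,a}.\]
Substituting this into the left-hand side of condition $(*)$ and interchanging the two finite sums gives
\[\sum_{\varepsilon\in\widehat{U^{(1)}/U^{(m)}}}\varepsilon(\alpha)^{-1}a_n(\varepsilon)=\sum_{a\in U^{(1)}/U^{(m)}}c_{n,a}\Bigl(\sum_{\varepsilon\in\widehat{U^{(1)}/U^{(m)}}}\varepsilon(\alpha^{-1}a)\Bigr).\]

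Then I would apply character orthogonality on $G=U^{(1)}/U^{(m)}$: the inner sum equals $\#G$ when $\alpha^{-1}a=1$ in $G$ and vanishes otherwise. Since $\alpha\in U^{(1)}$, its class $\bar\alpha$ lies in $G$, so only the term $a=\bar\alpha$ survives; moreover $\#G=p^{m-1}$, since each graded piece $U^{(j)}/U^{(j+1)}$ has order $p$ for $1\le j\le m-1$. Hence
\[\sum_{\varepsilon\in\widehat{U^{(1)}/U^{(m)}}}\varepsilon(\alpha)^{-1}a_n(\varepsilon)=p^{m-1}\,c_{n,\bar\alpha},\]
and as $c_{n,\bar\alpha}\in\mathcal{O}$ this lies in $p^{m-1}\mathcal{O}$, which is precisely condition $(*)$.

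I do not expect a genuine obstacle: the argument is a standard Kummer-congruence manipulation, and the only points needing care are the justification that Lemma \ref{LemmaKatz1} applies equally to $\sigma_a^{-1}$ (securing the integrality of the $c_{n,a}$) and the elementary computation $\#\bigl(U^{(1)}/U^{(m)}\bigr)=p^{m-1}$. The entire arithmetic content of the congruence is thus concentrated in the integrality furnished by Lemma \ref{LemmaKatz1}, with orthogonality performing only the formal task of isolating a single integral coefficient.
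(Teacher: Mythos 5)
Your proof is correct and follows essentially the same route as the paper: expand $g_m(\varepsilon)=\sum_a\varepsilon(a)\,f_m|\tau_m|U_p^m|\sigma_a^{-1}$, interchange the two finite sums, apply orthogonality of characters of $U^{(1)}/U^{(m)}$, and invoke Lemma \ref{LemmaKatz1} for the integrality of the coefficients. Your coefficient-level version is in fact slightly cleaner, isolating the single term $p^{m-1}c_{n,\bar\alpha}$, whereas the paper's closing congruence (stated as $\equiv 1 \pmod{p^{m-1}}$) contains what appears to be a typo for the divisibility by $p^{m-1}$ that your computation makes explicit.
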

\begin{proof} Write $\varphi(a)=(f_m|\tau_m|U_p^m|\sigma_a^{-1})$ for
$a\in U^{(1)}/U^{(m)}$ to simplify the notation. Then we have 
\[\begin{split}
  \sum_{\varepsilon\in\widehat{U^{(1)}/U^{(m)}}}  \varepsilon(\alpha)^{-1}g_m(\varepsilon) &=
\sum_{\varepsilon\in\widehat{U^{(1)}/U^{(m)}}} \varepsilon(\alpha)^{-1}
\left(\sum_{a\in U^{(1)}/U^{(m)}}\varepsilon(a)\varphi(a)\right)\\
&=\sum_{a\in U^{(1)}/U^{(m)}}\varphi(a)\left(\sum_{\varepsilon\in\widehat{U^{(1)}/U^{(m)}}}\varepsilon(a\alpha^{-1})\right)
\end{split}\] and $\sum_{\varepsilon\in\widehat{U^{(1)}/U^{(m)}}}\varepsilon(a)\equiv 1\pmod {p^{m-1}}$. 
\end{proof}

Let $\Lambda_\mathcal{O}\defeq \mathcal{O}[[U^{(1)}]]$. It follows from Proposition \ref{propohta} and \cite[Lemma (2.4.2)]{Ohta-ES} that for each $(f_m)_{m\geq 1}$ in $\mathfrak{S}_\infty^*$ we can associate a power series\[\mathcal{F}(T_x)=\sum_{n\geq 0}a_nT_x^n\in\Lambda_\mathcal{O}[[T_x]]\] 
such that for each $\varepsilon$ as before we have 
$\mathcal{F}_\varepsilon(T_x)=\Phi(\varepsilon)$,
where 
\[\mathcal{F}_\varepsilon(T_x)= \sum_{n\geq 0}a_n(\varepsilon(u)-1)T_x^n\]
and $u$ is a topological generator of $U^{(1)}$ 
(to explain the notation, 
the choice of $u$ fixes the isomorphism of $\mathcal{O}$-algebras 
$\Lambda_{\mathcal{O}}=\mathcal{O}[[1+p\Z_p]]\simeq  \mathcal{O}[[X]]$ that takes 
$u$ of $U^{(1)}=1+p\Z_p$ to $X+1$, so if we identify $a\in \Lambda_\mathcal{O}$ 
with a power series $a(X)\in \mathcal{O}[[X]]$, 
the value $\varepsilon(a)(X)$ is the value of $a$ at $X=\varepsilon(u)-1$). 
We therefore obtain a 
map $\mathfrak{S}_\infty^*\rightarrow \Lambda_\mathcal{O}[[T_x]]$. Note that the action of $U^{(1)}/U^{(m)}$ 
on $S_2(\Gamma_m,F)$ equips $\invlim S_2(\Gamma_m,F)$ with a canonical structure of $\Lambda_\mathcal{O}$-modules, so the aforementioned map then becomes an injective map
\begin{equation}\label{STexp}
\mathfrak{S}_\infty^*\longmono \Lambda_\mathcal{O}[[T_x]]\end{equation} of $\Lambda_\mathcal{O}$-modules (the injectivity follows immediately from the fact that modular forms are non-vanishing sections of differentials). 

{\begin{remark}\label{remwiles}
    One can define \emph{Wiles $\Lambda$-adic modular forms} to be the 
    $\mathcal{O}$-submodule of $\Lambda_\mathcal{O}[[T_x]]$ consisting of those power series $\mathcal{F}(T_x)= \sum_{n\geq 0}a_nT_x^n$ such that for all arithmetic morphisms $\kappa$ of $\Lambda_\mathcal{O}$, except possibly a finite number of them, 
    $\mathcal{F}_\kappa(T_x)= \sum_{n\geq 0}a_n(\kappa)T_x^n$ is the $T_x$-expansion of a quaternionic modular form in $S_k(\Gamma_m, \mathcal{O}_\varepsilon)$
 (where $a_n(\kappa)=\kappa(a_n)$, and $\kappa$ has weight $k$ and level $\Gamma_m$). Without a good theory of $U_p$-actions on such formal series, it seems difficult to develop a theory analogue to that of Hida--Ohta, and show a relation between 
 Wiles $\Lambda$-adic modular forms and $\mathfrak{S}_\infty^*$ similar to that in \cite[Theorem 2.3.6]{Ohta-ES}. 
\end{remark}}

\section{$p$-divisible groups of Shimura curves}
The goal of this section is to extend some results of Ohta \cite{Ohta-ES, OhtaC, OhtaMA} to the case of Shimura curves. Although we follow closely these references, some proofs are obtained by combining the original results over modular curves with the Jacquet--Langlands correspondence; for this reason, we added some of the details (sometimes overlapping with those of Ohta), while proofs which are virtually identical to the original ones are simply omitted. 

\subsection{Quotients of Jacobians of Shimura curves} 
Let $J_m$ denote as before the Jacobian variety of $X_m$ for $m\geq 0$. Recall the Shimura curves 
${X}_{m-1}^{(m)}$ for $m\geq 1$ introduced in \S\ref{sectionAL}, and let ${J}_{m-1}^{(m)}$ be its Jacobian variety. 
Then we have canonical projection maps $\pi_m\colon X_m\rightarrow X_{m-1}^{(m)}$ and $\varpi_m\colon X_{m-1}^{(m)}\rightarrow X_{m-1}$, which induce pull-backs and pushforwards on the level of the Jacobians. Note that the kernel of $\pi_m^*\colon J_{m-1}^{(m)}\rightarrow J_m$ is finite for all $m\geq 1$. We recursively define three families of quotients of $J_m$ as $m$ varies as follows (in all three items, $\cdot^0$ denotes the connected component containing the identity of the corresponding subvariety):
\begin{itemize}
\item The family $\alpha_m\colon J_m\to A_m$ starts with $A_1=J_1$ and $\alpha_1=\mathrm{id}_{J_1}$. Assuming we have $\alpha_{m-1}$, set $K_m=\ker(\alpha_{m-1}\circ\varpi_{m,*})$ and let $\alpha_m\colon J_m\rightarrow A_m=J_m/\pi_m^*(K_m)^0$ be the canonical projection;

\item The family $\beta_m\colon J_m\to B_m$ (\emph{good quotients}) starts with $B_1$ being the quotient of $J_1$ by its maximal abelian subvariety which has multiplicative reduction at $p$ and $\beta_1\colon J_1\rightarrow B_1$ the canonical projection map. Assuming we have $\beta_{m-1}$, let $H_m=\ker(\beta_{m-1}\circ\varpi_{m,*})$ and let $\beta_m\colon J_m\rightarrow 
B_m=J_m/\pi_m^*(H_m)^0$ be the canonical projection;

\item The family $\gamma_m\colon J_m\to C_m$ starts with $C_1$ be the quotient of $J_1/\pi_1^*(J^{(1)}_0)$ and let $\gamma_1\colon J_1\rightarrow C_1$ the canonical projection map. Assuming we have $\gamma_{m-1}$, let $T_m=\ker(\gamma_{m-1}\circ\varpi_{m,*})$ and let $\gamma_m\colon J_m\rightarrow 
C_m=J_m/\pi_m^*(T_m)^0$ be the canonical projection.
\end{itemize}

\begin{lemma}\label{lemma3.1} We have canonical maps $p_m\colon A_m\rightarrow C_m$ and $q_m\colon A_m\rightarrow B_m$ for all $m\geq 1$ satisfying
$p_m\circ\alpha_m=\gamma_m$ and $q_m\circ\alpha_m=\beta_m$. Moreover, $\ker(q_m)$ has multiplicative reduction at $p$. 
\end{lemma}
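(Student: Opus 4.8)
The plan is to prove both assertions simultaneously by induction on $m$, running the recursive definitions of the two families in parallel. The existence of $p_m$ and $q_m$ is formal: since $\alpha_m$, $\beta_m$ and $\gamma_m$ are all quotient maps (hence epimorphisms), a factorization $p_m\circ\alpha_m=\gamma_m$ (resp. $q_m\circ\alpha_m=\beta_m$) exists and is automatically unique as soon as $\ker(\alpha_m)\subseteq\ker(\gamma_m)$ (resp. $\ker(\alpha_m)\subseteq\ker(\beta_m)$). For $m=1$ this holds trivially because $\alpha_1=\mathrm{id}_{J_1}$, so I would simply put $p_1=\gamma_1$ and $q_1=\beta_1$. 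For the inductive step I would assume the factorizations at level $m-1$, which give $\ker(\alpha_{m-1})\subseteq\ker(\beta_{m-1})$ and $\ker(\alpha_{m-1})\subseteq\ker(\gamma_{m-1})$. Taking preimages under $\varpi_{m,*}$ yields $K_m\subseteq H_m$ and $K_m\subseteq T_m$, and applying the monotone operation $\pi_m^*(-)^0$ gives $\ker(\alpha_m)=\pi_m^*(K_m)^0\subseteq\pi_m^*(H_m)^0=\ker(\beta_m)$ and likewise $\ker(\alpha_m)\subseteq\ker(\gamma_m)$, producing $p_m$ and $q_m$.

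For the multiplicative reduction I would again induct. The base case is exactly the definition of $B_1$: $\ker(q_1)=\ker(\beta_1)$ is by construction the maximal abelian subvariety of $J_1$ with multiplicative reduction at $p$. For the inductive step, since $\alpha_m$ is surjective and $\ker(\alpha_m)\subseteq\ker(\beta_m)$, I would identify $\ker(q_m)\cong\ker(\beta_m)/\ker(\alpha_m)=\pi_m^*(H_m)^0/\pi_m^*(K_m)^0$, and then compare this with $\ker(q_{m-1})$ up to isogeny using two ingredients. First, $\pi_m^*\colon J_{m-1}^{(m)}\to J_m$ has finite kernel (recorded in the excerpt), hence is an isogeny onto its image; restricting it to the connected groups $H_m^0$ and $K_m^0$ shows that $\pi_m^*(H_m)^0/\pi_m^*(K_m)^0$ is isogenous to $H_m^0/K_m^0$, and therefore to $H_m/K_m$ since the component groups are finite. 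Second, and this is the key cancellation, the positive-dimensional subvariety $\ker(\varpi_{m,*})$ lies in both $K_m=\ker(\alpha_{m-1}\circ\varpi_{m,*})$ and $H_m$, so the surjective map $\varpi_{m,*}$ induces an isomorphism $H_m/K_m\cong\ker(\beta_{m-1})/\ker(\alpha_{m-1})\cong\ker(q_{m-1})$.

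Combining the two, $\ker(q_m)$ is isogenous to $\ker(q_{m-1})$, which has multiplicative reduction at $p$ by the inductive hypothesis. Since the reduction type (in particular, whether the identity component of the special fiber of the N\'eron model is a torus) is invariant under isogeny at the fixed prime $p$, I conclude that $\ker(q_m)$ has multiplicative reduction at $p$ as well. This mirrors Ohta's argument over modular curves, the formal structure of the quotient tower being identical here.

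The main obstacle I anticipate is the bookkeeping in the second induction: I must check carefully that passing to identity components and to the images $\pi_m^*(-)$ alters the relevant abelian varieties only up to isogeny, and that the large kernel $\ker(\varpi_{m,*})$ genuinely cancels in $H_m/K_m$ because it is contained in both terms. The only external input is the isogeny-invariance of purely multiplicative (toric) reduction at $p$, which I would quote as standard; everything else reduces to chasing the exact sequences defining $K_m$, $H_m$ and the quotients $A_m,B_m,C_m$.
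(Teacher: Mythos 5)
Your proof is correct and takes essentially the same route as the paper: induction on $m$, with the kernel inclusions $K_m\subseteq H_m$ and $K_m\subseteq T_m$ coming from the inductive hypothesis, and $p_m,q_m$ then produced by the universal property of the quotient. The paper simply declares the multiplicative-reduction statement ``clear''; your isogeny chase --- identifying $\ker(q_m)$ with $\ker(\beta_m)/\ker(\alpha_m)$, cancelling $\ker(\varpi_{m,*})$ to get $\ker(q_m)$ isogenous to $\ker(q_{m-1})$, and reducing by induction to $\ker(q_1)=\ker(\beta_1)$ together with isogeny-invariance of multiplicative reduction --- is a correct and complete way of supplying that omitted detail.
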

\begin{proof}
    The proof is by induction on $m$. The case $m=1$ is obvious taking $p_1=\gamma_1$ and $q_1=\beta_1$. 
The existence of $p_{m-1}$ (resp. $q_{m-1}$) shows that we have a canonical inclusion of $K_m$ into $T_m$ (resp. $H_m$), which induces a canonical inclusion of $\pi^*_m(K_m)^0$ into $\pi^*_m(T_m)^0$ (resp. $\pi^*_m(H_m)^0$) and therefore the existence and uniqueness of $p_m$ (resp. $q_m$) follows from the universal property of the quotient.
The last statement about the kernel of $q_m$ is clear. 
\end{proof}

On the level of cotangent spaces, we have a 
canonical map $\gamma_m^*\colon \Cot(C_m)\rightarrow \Cot(J_m)$ and a canonical isomorphism 
\[\delta_m\colon \Cot(J_m)\otimes_\Q\C\simeq H^0(X_m,\Omega_{X_m}^1)\otimes_\Q\C \simeq S_2(\Gamma_m,\C).\] 
Similarly one can define $\alpha_m^*$ and $\beta_m^*$. 

\begin{proposition}\label{prop3.1}
The image of the composition of $\delta_m\circ\gamma_m^*\colon \Cot_\C(C_m)\rightarrow S_2(\Gamma_m,\C)$ is the subspace $\mathcal{M}_m=\oplus_{i=1}^m\oplus_\psi S_2(\Gamma_i,\psi,\C)$ of $S_2(\Gamma_m,\C)$,
where, fixed $i=1,\dots,m$, the direct sum is over all characters $\psi\colon \Gamma_i\rightarrow\C^\times$ of conductor divisible by $p^i$. 
\end{proposition}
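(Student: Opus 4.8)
The plan is to argue by induction on $m$, translating each geometric quotient into a condition on modular forms via the isomorphism $\delta_m$ and the trace maps of \S\ref{sectionAL}. The starting point is purely formal: for a surjection $\gamma\colon J\to C=J/B^0$ of abelian varieties in characteristic zero, the pullback $\gamma^*\colon\Cot(C)\to\Cot(J)$ is injective with image the invariant differentials vanishing on $B^0$, and since $B^0$ and $B$ share a Lie algebra this is the same as vanishing on $B$. Applied to $C_m=J_m/\pi_m^*(T_m)^0$, I would first show that $\gamma_m^*\Cot(C_m)$ consists of those $\omega\in\Cot(J_m)$ whose image under the cotangent map of $\pi_m^*\colon J_{m-1}^{(m)}\to J_m$ vanishes on $T_m$. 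The key point here is that $\pi_m^*$ has finite kernel (as recalled before Lemma \ref{lemma3.1}), so it restricts to an isogeny $T_m\to\pi_m^*(T_m)^0$ inducing an isomorphism on cotangent spaces after $\otimes\,\C$; moreover, under $\delta_m$, the cotangent map attached to $\pi_m^*$ (Picard functoriality) is exactly the trace $\mathrm{Tr}_m^{(m)}$ of the covering $\pi_m\colon X_m\to X_{m-1}^{(m)}$.

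Next I would unwind $T_m=\ker(\gamma_{m-1}\circ\varpi_{m,*})$. Since this composite is surjective onto $C_{m-1}$, the invariant differentials on $J_{m-1}^{(m)}$ vanishing on $T_m$ are precisely the image of $(\gamma_{m-1}\circ\varpi_{m,*})^*=\varpi_m^*\circ\gamma_{m-1}^*$, where $\varpi_m^*$ is the cotangent map of the Albanese map $\varpi_{m,*}$, i.e. pullback of forms. By the inductive hypothesis $\gamma_{m-1}^*\Cot(C_{m-1})=\mathcal{M}_{m-1}$. Passing to $S_2$-spaces, this yields the characterization
\[\delta_m\bigl(\gamma_m^*\Cot(C_m)\bigr)=\{\omega\in S_2(\Gamma_m,\C):\ \mathrm{Tr}_m^{(m)}(\omega)\in\varpi_m^*\mathcal{M}_{m-1}\}.\]
The base case $m=1$ is the special instance with $\mathcal{M}_0=0$, for which $T_1=J_0^{(1)}$ and the condition reduces to $\omega\in\ker(\mathrm{Tr}_1^{(1)})$.

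The heart of the matter is then the eigenspace decomposition under $G=\Gamma_{m-1}^{(m)}/\Gamma_m\simeq U^{(m-1)}/U^{(m)}$, cyclic of order $p$, realized on forms by the operators $\sigma_a$. Because $\mathrm{Tr}_m^{(m)}=\sum_a(\,\cdot\,)|\sigma_a$ equals $p$ times the projector onto $G$-invariants and $p$ is invertible in $\C$, the condition $\mathrm{Tr}_m^{(m)}(\omega)\in\varpi_m^*\mathcal{M}_{m-1}$ constrains only the $G$-invariant part $\omega_1\in S_2(\Gamma_{m-1}^{(m)},\C)=S_2(\Gamma_m,\C)^{G}$, leaving the sum $S_2(\Gamma_m,\C)^{G\text{-ni}}$ of the nontrivial $G$-eigenspaces unconstrained. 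Hence
\[\delta_m\bigl(\gamma_m^*\Cot(C_m)\bigr)=S_2(\Gamma_m,\C)^{G\text{-ni}}\ \oplus\ \pi_m^*\varpi_m^*\mathcal{M}_{m-1}.\]
I would then match the two summands with $\mathcal{M}_m$. Since $\sigma_a$ acts through the diamond operator $\langle a\rangle$ with $a\in U^{(m-1)}/U^{(m)}$, a form with $p$-nebentypus $\psi$ lies in $S_2(\Gamma_m,\C)^{G\text{-ni}}$ exactly when $\psi$ is nontrivial on $U^{(m-1)}/U^{(m)}$, i.e. when $p^m\mid\mathrm{cond}(\psi)$; this identifies the first summand with the $i=m$ part of $\mathcal{M}_m$. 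For the second summand, $\pi_m^*\varpi_m^*=(\varpi_m\circ\pi_m)^*$ is the natural inclusion $S_2(\Gamma_{m-1},\C)\hookrightarrow S_2(\Gamma_m,\C)$, so it carries $\mathcal{M}_{m-1}$ onto the $i\le m-1$ part of $\mathcal{M}_m$. Adding the two recovers $\mathcal{M}_m$ and closes the induction.

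The facts I am importing — the nebentypus decomposition of $S_2$, the identification of $\sigma_a$ with diamond operators, and the behaviour of $\mathrm{Tr}_m^{(m)}$ — are those that underlie Ohta's argument in the $\GL_2$ case; in the quaternionic setting I would transport them through the Jacquet--Langlands correspondence, which is equivariant for Hecke, diamond and trace operators. The main obstacle is the second step, the passage from the geometric kernel $\pi_m^*(T_m)^0$ to the trace condition: one must track carefully the connected-component and finite-kernel subtleties (harmless after $\otimes\,\C$, since only Lie algebras intervene) and, above all, verify that the operators $\sigma_a$ in $\mathrm{Tr}_m^{(m)}$ act through the diamond operators with exactly the stated conductor bookkeeping, so that ``nontrivial $G$-eigenspace'' corresponds precisely to ``conductor divisible by $p^m$''.
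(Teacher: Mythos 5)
Your proof is correct and takes essentially the same route as the paper's own (omitted) argument, which follows \cite[Proposition (4.1.8)]{OhtaC}: induction on $m$, identification of $\gamma_m^*\Cot_\C(C_m)$ as the forms $\omega$ with $\mathrm{Tr}_m^{(m)}(\omega)\in\varpi_m^*\mathcal{M}_{m-1}$ (using that the cotangent map of $\pi_m^*$ is the trace and that finite kernels are invisible on Lie algebras), and then the eigenspace decomposition under the diamond action of $\Gamma_{m-1}^{(m)}/\Gamma_m$. The only cosmetic slip is at the base case: $\Gamma_0^{(1)}/\Gamma_1$ is isomorphic to $(\Z/p\Z)^\times$, of order $p-1$, rather than cyclic of order $p$, but the identity ``kernel of trace $=$ sum of nontrivial eigenspaces'' holds for any finite abelian group acting in characteristic zero, so the argument is unaffected.
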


\begin{proof} 
The proof is similar to \cite[Proposition (4.1.8)]{OhtaC}, and is omitted. 
\end{proof}

\begin{corollary}\label{coro3.3}
The image of the composition of $\delta_m\circ\alpha_m^*\colon \Cot_\C(A_m)\rightarrow S_2(\Gamma_m,\C)$ is the direct sum 
$\mathcal{P}_m\oplus S_2(\Gamma_1,\C)$, where $\mathcal{P}_1=0$ and, for $m\geq 2$, $\mathcal{P}_m=\oplus_{i=2}^m\oplus_\psi S_2(\Gamma_i,\psi,\C)$,
and, as in Proposition $\ref{prop3.1}$, fixed $i=2,\dots,m$, the direct sum is over all characters $\psi\colon \Gamma_i\rightarrow\C^\times$ of conductor divisible by $p^i$. 
\end{corollary}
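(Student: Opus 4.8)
The plan is to reduce the statement to Proposition \ref{prop3.1} by comparing the tower $\{A_m\}$ with the tower $\{C_m\}$: the two are built by the same recursion but from different starting quotients, $A_1=J_1$ and $C_1=J_1/\pi_1^*(J_0^{(1)})$, and it is precisely this discrepancy that produces the extra summand. I argue by induction on $m$, following the inductive scheme of Proposition \ref{prop3.1} (i.e.\ of \cite[Proposition (4.1.8)]{OhtaC}), so that the heart of the matter is the base case together with the propagation of a single extra piece. For $m=1$ we have $A_1=J_1$ and $\alpha_1=\mathrm{id}_{J_1}$, hence $\delta_1\circ\alpha_1^*=\delta_1$ is an isomorphism and its image is all of $S_2(\Gamma_1,\C)=\mathcal{P}_1\oplus S_2(\Gamma_1,\C)$, since $\mathcal{P}_1=0$; this settles the base case.

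For the inductive step I would first invoke Lemma \ref{lemma3.1}: the map $p_m\colon A_m\to C_m$ is surjective (it is a map between two quotients of $J_m$) and satisfies $\gamma_m=p_m\circ\alpha_m$, so on cotangent spaces $\gamma_m^*=\alpha_m^*\circ p_m^*$ with $p_m^*$ injective. Proposition \ref{prop3.1} then gives at once the inclusion $\mathcal{M}_m=\im(\delta_m\circ\gamma_m^*)\subseteq\im(\delta_m\circ\alpha_m^*)$, and it remains to compute the quotient. Since $K_m\subseteq T_m$ (because $\gamma_{m-1}=p_{m-1}\circ\alpha_{m-1}$ factors through $\alpha_{m-1}$), we have $\ker(p_m)=\pi_m^*(T_m)^0/\pi_m^*(K_m)^0$, which is a connected abelian variety, so there is a short exact sequence $0\to\Cot(C_m)\to\Cot(A_m)\to\Cot(\ker(p_m))\to0$, and the sought-for quotient is the image in $S_2(\Gamma_m,\C)$ of $\Cot(\ker(p_m))\otimes_\Q\C$. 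Using that $\ker(\pi_m^*)$ is finite and that $\alpha_{m-1}\circ\varpi_{m,*}$ identifies $T_m/K_m$ with $\ker(p_{m-1})$, one checks inductively that $\ker(p_m)$ is isogenous to the pullback to level $m$ of $\ker(p_1)=\pi_1^*(J_0^{(1)})^0$; hence its cotangent image is the single pulled-back copy of $\Cot(J_0^{(1)})_\C\cong S_2(\Gamma_0^{(1)},\C)$, giving $\im(\delta_m\circ\alpha_m^*)=\mathcal{M}_m\oplus S_2(\Gamma_0^{(1)},\C)$.

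Finally I would rewrite this in the asserted form. Decomposing $S_2(\Gamma_1,\C)$ under the diamond action of $\Gamma_0^{(1)}/\Gamma_1$ at $p$ yields $S_2(\Gamma_1,\C)=S_2(\Gamma_0^{(1)},\C)\oplus\bigl(\oplus_{\psi\neq 1}S_2(\Gamma_1,\psi,\C)\bigr)$, where the trivial eigenspace is $S_2(\Gamma_0^{(1)},\C)$ and the nontrivial eigenspaces form exactly the $i=1$ term of $\mathcal{M}_m$; since $\mathcal{M}_m=\mathcal{P}_m\oplus\bigl(\oplus_{\psi\neq 1}S_2(\Gamma_1,\psi,\C)\bigr)$, we conclude $\mathcal{M}_m\oplus S_2(\Gamma_0^{(1)},\C)=\mathcal{P}_m\oplus S_2(\Gamma_1,\C)$, as claimed. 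The main obstacle I expect is the isogeny bookkeeping in the inductive step — specifically, verifying that $\ker(p_m)$ contributes exactly one copy of $S_2(\Gamma_0^{(1)},\C)$ with no multiplicity growth along the tower — where the finiteness of $\ker(\pi_m^*)$ and the compatibility of the trace maps $\mathrm{Tr}_m^{(m)}$ with the diamond operators are the essential inputs.
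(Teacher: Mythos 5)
Your proposal is correct and is essentially the proof the paper intends (the paper itself defers to \cite[Proposition (1.1.5)]{OhtaMA}): induction on $m$ with base case $A_1=J_1$, the inclusion $\mathcal{M}_m\subseteq\im(\delta_m\circ\alpha_m^*)$ coming from Lemma \ref{lemma3.1} and Proposition \ref{prop3.1}, the extra summand controlled by the identification $T_m/K_m\simeq\ker(p_{m-1})$, and the final rewriting $\mathcal{M}_m\oplus S_2(\Gamma_0^{(1)},\C)=\mathcal{P}_m\oplus S_2(\Gamma_1,\C)$. The only (cosmetic) difference is that you run the induction on the abelian varieties $\ker(p_m)$ up to isogeny, whereas the paper's argument runs it dually on the cotangent cokernels, showing $\coker\Cot(p_m)\simeq\coker\Cot(p_{m-1})\simeq S_2(\Gamma_1,\C)/\mathcal{M}_1$ via two commutative diagrams; these are the same computation, and the multiplicity/subspace-identification issue you flag at the end is present in both formulations and is resolved exactly as you indicate, by Hecke and diamond equivariance of the whole construction.
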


\begin{proof}
The proof is similar to \cite[Proposition (1.1.5)]{OhtaMA} and is omitted. 
\end{proof} 

Recall the element $v_p$ from \S\ref{stabilization}. 
\begin{corollary}\label{coro3.4}
The image of the composition of $\delta_m\circ\beta_m^*\colon \Cot(B_m)_\C\rightarrow S_2(\Gamma_m,\C)$ is the direct sum 
$\mathcal{M}_m\oplus S_2(\Gamma_0,\C)\oplus S_2(\Gamma_0,\C)^{v_p}$, where $S_2(\Gamma_0,\C)^{v_p}=\{f|v_p : f\in S_2(\Gamma_0,\C)\}$.
\end{corollary}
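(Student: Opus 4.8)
The plan is to deduce the statement from Corollary \ref{coro3.3} together with the factorization of $\beta_m$ through $\alpha_m$ furnished by Lemma \ref{lemma3.1}. Since $\beta_m=q_m\circ\alpha_m$, on cotangent spaces we have $\beta_m^*=\alpha_m^*\circ q_m^*$, and because $q_m^*$ is injective we obtain
\[
\im(\delta_m\circ\beta_m^*)=(\delta_m\circ\alpha_m^*)\bigl(q_m^*\Cot(B_m)_\C\bigr).
\]
Thus, transporting along the injection $\delta_m\circ\alpha_m^*$ with image $\mathcal{P}_m\oplus S_2(\Gamma_1,\C)$ of Corollary \ref{coro3.3}, it suffices to pin down the subspace $q_m^*\Cot(B_m)_\C$ inside $\Cot(A_m)_\C$. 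From the short exact sequence $0\to\ker(q_m)^0\to A_m\to B_m\to 0$ of abelian varieties one gets the exact sequence $0\to\Cot(B_m)_\C\xrightarrow{q_m^*}\Cot(A_m)_\C\to\Cot(\ker(q_m)^0)_\C\to 0$, so that $q_m^*\Cot(B_m)_\C$ is precisely the space of invariant differentials on $A_m$ that vanish on the sub-abelian variety $\ker(q_m)^0$.

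Next I would rewrite the target. Decomposing $S_2(\Gamma_1,\C)$ according to the conductor at $p$ of the nebentypus and recalling that $\mathcal{M}_m$ consists of $\mathcal{P}_m$ together with the conductor-exactly-$p$ forms of level $\Gamma_1$, we find $\mathcal{P}_m\oplus S_2(\Gamma_1,\C)=\mathcal{M}_m\oplus S_2^{\mathrm{triv}}(\Gamma_1,\C)$, where $S_2^{\mathrm{triv}}(\Gamma_1,\C)$ denotes the forms of level $\Gamma_1$ with trivial character at $p$ (those of $\Gamma_0(p)$-type). The classical theory of oldforms and newforms at $p$ then gives
\[
S_2^{\mathrm{triv}}(\Gamma_1,\C)=S_2(\Gamma_0,\C)\oplus S_2(\Gamma_0,\C)^{v_p}\oplus S_2^{\mathrm{triv}}(\Gamma_1,\C)^{p\text{-new}},
\]
the first two summands being the images of the two degeneracy maps from tame level and the last the $p$-new (Steinberg) part. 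Comparing with the desired answer, the claim becomes equivalent to showing that $\ker(q_m)^0$ detects exactly the Steinberg part $S_2^{\mathrm{triv}}(\Gamma_1,\C)^{p\text{-new}}$, so that $q_m^*\Cot(B_m)_\C$ corresponds to $\mathcal{M}_m\oplus S_2(\Gamma_0,\C)\oplus S_2(\Gamma_0,\C)^{v_p}$.

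This is where the reduction theory at $p$ enters, and it is the main obstacle. Since $\ker(q_m)$ has multiplicative reduction at $p$ by Lemma \ref{lemma3.1}, the subvariety $\ker(q_m)^0$ lies in the maximal abelian subvariety of $A_m$ with toric reduction at $p$, and I would identify the latter with the $p$-new Steinberg part. For $m=1$ this is the content of the isogeny between the $p$-new quotient (the cokernel of the two degeneracy maps $\varpi^*$ and $w_*\circ\varpi^*$ from $J_0\times J_0$) and the maximal multiplicative abelian subvariety $J_{\mathrm{mult}}$, together with $B_1=J_1/J_{\mathrm{mult}}$; here one uses that $J_0$ has good reduction, since the special fiber $\X_0$ is smooth (Section 2), so the $p$-old forms $S_2(\Gamma_0,\C)\oplus S_2(\Gamma_0,\C)^{v_p}$ survive, while the ramified-at-$p$ forms comprising $\mathcal{M}_m$ have potentially good, hence non-multiplicative, reduction and therefore also survive. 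For $m\ge 2$ the only Steinberg contributions in $\im(\delta_m\circ\alpha_m^*)=\mathcal{M}_m\oplus S_2^{\mathrm{triv}}(\Gamma_1,\C)$ still lie in the fixed space $S_2^{\mathrm{triv}}(\Gamma_1,\C)$, since the new forms added to $\mathcal{M}_m$ at level $i\ge 2$ have ramified nebentypus and hence never meet the toric part; this is consistent with the dimension count $\dim\ker(q_m)^0=\dim S_2^{\mathrm{triv}}(\Gamma_1,\C)^{p\text{-new}}$, which is independent of $m$. The decisive step is thus the identification of the toric part of the reduction of $A_m$ with the $p$-new Steinberg forms for the Shimura curves $X_m$, for which I would invoke the special-fiber description recalled in Section 2 (Carayol, Buzzard) and the description of the $p$-new subvariety in \cite{LRV}, transported where convenient through the Jacquet--Langlands correspondence.
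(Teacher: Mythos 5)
Your proposal is correct and takes essentially the same route as the paper's proof: factor $\beta_m^*$ through $\alpha_m^*$ using Lemma \ref{lemma3.1}, transport the question into $\mathcal{P}_m\oplus S_2(\Gamma_1,\C)$ via Corollary \ref{coro3.3}, and then sort eigenspaces by reduction type, killing the $p$-new trivial-character (Steinberg) part because it has toric reduction (as in \cite{LRV}) while the $p$-old part and the ramified-nebentypus part survive. The only cosmetic difference is that the paper justifies the survival of $\mathcal{M}_m$ by good reduction of the associated abelian varieties through Jacquet--Langlands and the results of Mazur--Wiles and Ohta, whereas you invoke potentially good (hence non-multiplicative) reduction; the two arguments are interchangeable.
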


\begin{proof}
{
Consider the map $\Cot(q_m)\colon \Cot(B_m)_\C\hookrightarrow\Cot(A_m)_\C$ induced by the map
$q_m$ in Lemma \ref{lemma3.1}. We study the image of $\delta_m\circ\Cot(q_m)$ in $S_2(\Gamma_m,\C)$. If $m\geq 1$, the abelian variety $A_f$ associated to each $f\in \mathcal{M}_m$ by the Eichler--Shimura construction is isogenous to the abelian variety $A_{f_{\GL_2}}$ similarly constructed for the $\GL_2$ modular form $f_{\GL_2}$ which lifts through Jacquet--Langlands to $f$. 
Each of these forms have good reduction (see \cite[Chapter 3, Section 1]{MW}, \cite[\S1.1]{OhtaMA}), so the image of $\delta_m\circ\Cot(q_m)$ contains $\mathcal{M}_m$ for all $m\geq 1$. Since, from Corollary \ref{coro3.3}, 
$\Cot(A_m)_\C\simeq\mathcal{P}_m\oplus S_2(\Gamma_1,\C)$ (via the map $\delta_m$), it suffices to check that the image on $\delta_m$ in $S_2(\Gamma_1,\C)$ is $\mathcal{M}_1\oplus S_2(\Gamma_0,\C)\oplus S_2(\Gamma_0,\C)^{v_p}$.

Note that we have a direct sum decomposition 
$S_2(\Gamma_1,\C)\simeq \mathcal{M}_1\oplus S_2(J^{(1)}_0,\C)$. The $p$-new part of $J^{(1)}_0$ has toric reduction at $p$ (see for example \cite{LRV}) and therefore any $p$-new form $f$ in $S_2(\Gamma^{(1)}_0,\C)$ vanish in $\Cot(B_m)_\C$. On the other hand, the $p$-old part of $J^{(1)}_0$, which is precisely $S_2(\Gamma_0,\C)\oplus S_2(\Gamma_0,\C)^{v_p}$, has good reduction, so it belongs to the image of $\delta_m\circ\Cot(q_m)$, and the result follows.}
\end{proof}

In the following, when the context is clear, we often omit the map $\delta_m$. Using the results collected so far, we now compare $A_m$, $B_m$ and $C_m$ with the analogous objects for the $\GL_2$-case, extensively studied by Mazur--Wiles \cite{MW} and Ohta \cite{OhtaJ, Ohta-ES, OhtaC, OhtaMA} (note that this approach, based on the Jacquet--Langlands correspondence, is already used in the proof of Corollary \ref{coro3.4}). For this, we need to introduce the analogue of $A_m$ and $B_m$ in the case of $\GL_2$ considered in \cite[\S1.1]{OhtaMA} and
\cite[\S4.1]{OhtaC}. We introduce the following notation, where $r\geq 1$ is an integer: 
\begin{itemize}
    \item We denote by $A_r^\Oh$ and $Q_r^\Oh$ the abelian varieties denoted $\mathcal{A}_r$ and $\mathcal{Q}_r$ in \cite[\S1.1]{OhtaMA}; in the case of Shimura curves considered in this paper (\emph{i.e.} for $D>1$) the absence of cusps shows that both constructions of $A_r^\Oh $ and $Q_r^\Oh $ reduce to the construction of the variety we denoted $A_m$ before. 
    \item We denote by $B_r^\Oh $ the good quotient of the modular Jacobian introduced in the last paragraph of \cite[\S1.1]{OhtaMA} and denoted $A_r$ in \emph{loc. cit.} In the case of Shimura curves considered in this paper (\emph{i.e.} for $D>1$) this corresponds to our varieties $B_m$.  
    \item We finally denoted $C_r^\Oh$ the abelian variery denoted by $Q_r$ in \cite[\S4.1]{OhtaC}; In the case of Shimura curves considered in this paper (\emph{i.e.} for $D>1$) this corresponds to our varieties $C_m$.  
\end{itemize} 
\begin{remark} We apologize to the reader for the conflict of notation regarding the symbol $A_r$, used in this paper and in \cite[\S1.1]{OhtaMA} with a different meaning; however, note that in \cite[\S4.1]{OhtaC} the symbol $B_r$ is used for good quotients, as in this paper. 
In any case, the meaning of the symbols should be entirely clarified in the lines preceding this remark.
\end{remark}

The abelian varieties $A_m^\Oh$, $B_m^\Oh$ and $C_m^\Oh$ are all equipped with the action of the full Hecke algebra $\tilde h_m$ for $\GL_2$ generated by standard Hecke operators $T_\ell$ for $\ell\nmid MDp^m$ and $U_\ell$ for $\ell\mid MDp^m$;  
if we denote $\tilde h_m^\new$ the $D$-new quotient of $\tilde h_m$, we may form the $D$-new quotients $ A_m^\Oh\otimes_{\tilde h_m}\tilde h_m^\new$,  
$B_m^\Oh\otimes_{\tilde h_m}\tilde h_m^\new$
and 
$C_m^\Oh\otimes_{\tilde h_m}\tilde h_m^\new$ of $A_m^\Oh$, $B_m^\Oh$ and $C_m^\Oh$ and take abelian subvarieties $A_m^{\Oh,\new}$, $B_m^{\Oh,\new}$ and $C_m^{\Oh,\new}$ of $A_m^\Oh$, $B_m^\Oh$ 
and $C_m^\Oh$ which are isogenous to $ A_m^\Oh\otimes_{\tilde h_m}\tilde h_m^\new$, $B_m^\Oh\otimes_{\tilde h_m}\tilde h_m^\new$  and 
$C_m^\Oh\otimes_{\tilde h_m}\tilde h_m^\new$, respectively. 
Let $X_1(MDp^m)$ be the compact modular curve of 
    level $\Gamma_1(MDp^m)$ and let $J_1(MDp^m)=\mathrm{Jac}(X_1(MDp^m))$ be its Jacobian; 
    taking the $D$-new quotient $J_1^\new(MDp^m)=J_1(MDp^m)\otimes_{\tilde h_m} \tilde{h}_m^\new$ 
     gives an abelian
    variety which is isogenous to $J_m$; we also denote with the same symbol $J_1^\new(MDp^m)$ an abelian subvariety of $J_1(MDp^m)$ which is isogenous to the $D$-new quotient.  

\begin{proposition}\label{isogenyprop} 
    There are $\Q$-isogenies $A_m\sim A_m^{\Oh,\new}$, $B_m\sim B_m^{\Oh,\new}$ and $C_m\sim C_m^{\Oh,\new}$.
\end{proposition}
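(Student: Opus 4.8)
The plan is to transfer Ohta's computations from the $\GL_2$-setting to the quaternionic one by means of the Jacquet--Langlands correspondence, reducing the assertion to an identification of cotangent spaces as Hecke modules. The starting point is the Hecke-equivariant $\Q$-isogeny $J_m\sim J_1^\new(MDp^m)$ recorded in the notation above. Since $A_m$, $B_m$ and $C_m$ are built from $J_m$ by exactly the same recursive recipe (kernels of the maps $\alpha_{m-1}\circ\varpi_{m,*}$ and their analogues, connected components thereof, and, for $B_1$, the maximal abelian subvariety with multiplicative reduction at $p$) that defines $A_m^\Oh$, $B_m^\Oh$, $C_m^\Oh$ from $J_1(MDp^m)$, and since all the maps involved ($\pi_m^*$, $\varpi_{m,*}$) together with the $D$-new projection $\otimes_{\tilde h_m}\tilde h_m^\new$ are Hecke-equivariant, it is natural to expect the two families to correspond under this isogeny. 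I would make this precise at the level of cotangent spaces, which determine the isogeny classes.

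First I would invoke the cotangent space computations already established: Proposition \ref{prop3.1}, Corollary \ref{coro3.3} and Corollary \ref{coro3.4} identify the images of $\Cot(C_m)$, $\Cot(A_m)$ and $\Cot(B_m)$ inside $S_2(\Gamma_m,\C)$ with the explicit Hecke-stable subspaces $\mathcal{M}_m$, $\mathcal{P}_m\oplus S_2(\Gamma_1,\C)$ and $\mathcal{M}_m\oplus S_2(\Gamma_0,\C)\oplus S_2(\Gamma_0,\C)^{v_p}$, respectively. On the $\GL_2$-side, the analogous computations of Ohta (\cite[\S1.1]{OhtaMA}, \cite[\S4.1]{OhtaC}) describe $\Cot(A_m^\Oh)$, $\Cot(B_m^\Oh)$ and $\Cot(C_m^\Oh)$ as the corresponding subspaces of $S_2(\Gamma_1(MDp^m),\C)$, and passing to the $D$-new quotients restricts these to their $D$-new parts. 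The Jacquet--Langlands correspondence supplies a Hecke-equivariant isomorphism between $S_2(\Gamma_m,\C)$ and the $D$-new subspace of $S_2(\Gamma_1(MDp^m),\C)$, compatible with the diamond and $U_p$ operators and with the Atkin--Lehner involution (\cite[Theorem 1.2]{BD96}); under it the subspaces $\mathcal{M}_m$, $\mathcal{P}_m$ and the $p$-old pieces $S_2(\Gamma_0,\C)$, $S_2(\Gamma_0,\C)^{v_p}$ match their $\GL_2$ analogues, including multiplicities.

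It then remains to pass from an identification of cotangent spaces to an isogeny of abelian varieties. Here I would use that the category of abelian varieties over $\Q$ up to isogeny, equipped with the action of the Hecke algebra, is semisimple by Poincar\'e reducibility, so that an isogeny quotient of a Jacobian is determined up to $\Q$-isogeny by its cotangent space viewed as a Hecke module --- equivalently, by the multiset of Hecke eigensystems it supports with multiplicity. By the previous paragraph $\Cot(A_m)$ and $\Cot(A_m^{\Oh,\new})$ support the same eigensystems with the same multiplicities, and likewise for the pairs $(B_m,B_m^{\Oh,\new})$ and $(C_m,C_m^{\Oh,\new})$; via Eichler--Shimura and Faltings' isogeny theorem these abelian varieties then have isomorphic rational Tate modules, yielding the desired $\Q$-isogenies.

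The hard part will be the bookkeeping needed to guarantee that the recursive constructions are genuinely compatible with both the $D$-new projection and the Jacquet--Langlands isogeny. In particular one must check that forming connected components of the kernels $K_m$, $H_m$, $T_m$ commutes, up to isogeny, with $\otimes_{\tilde h_m}\tilde h_m^\new$, and that the feature distinguishing $B_m$ from $C_m$ --- namely the $p$-old contribution $S_2(\Gamma_0,\C)\oplus S_2(\Gamma_0,\C)^{v_p}$ occurring with multiplicity two --- is transported correctly by Jacquet--Langlands; this is where the equivariance of \cite[Theorem 1.2]{BD96} for the Atkin--Lehner and $U_p$ operators is essential. Once these compatibilities are in place, the cotangent-space identification of the preceding step closes the argument.
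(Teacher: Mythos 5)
Your proposal is correct and follows essentially the same route as the paper's proof: the paper likewise combines the Jacquet--Langlands correspondence with Proposition \ref{prop3.1}, Corollary \ref{coro3.3} and Corollary \ref{coro3.4} (and their $\GL_2$-counterparts in \cite[Proposition (1.1.5)]{OhtaMA}, \cite[\S1.1]{OhtaMA} and \cite[Proposition (4.1.8)]{OhtaC}) to see that the isogeny factors of the two abelian varieties in each pair coincide, the key input being that the Eichler--Shimura abelian varieties $A_f$ and $A_{f_{\GL_2}}$ attached to JL-matching eigenforms are $\Q$-isogenous. The only difference is one of explicitness: you spell out the Poincar\'e-reducibility and Faltings arguments that let one pass from matching Hecke eigensystems on cotangent spaces to actual $\Q$-isogenies, whereas the paper absorbs this into the assertion that $A_f\sim A_{f_{\GL_2}}$.
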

\begin{proof} This follows from the Jacquet--Langlands correspondence; the reader is referred 
to \cite[\S1.6]{BD} or \cite[\S2.2]{LRdVP} for the properties of the Jacquet--Langlands correspondence that are need in this proof and in other similar proofs.
For any $f\in S_2(\Gamma_m,\C)$ which is an eigenform for all Hecke operators, choose a form $f_{\GL_2}\in S_2(\Gamma_0(MD)\cap\Gamma_1(p^m))$ 
which has the same eigenvalues of $f$ under the action of the Hecke operators $T_\ell$ for $\ell\nmid MDp$ and $U_\ell$ for $\ell\mid Mp$; the lift $f_\mathrm{GL_2}\mapsto f$ is not canonical, but the lines spanned by $f$ and $f_\mathrm{GL_2}$ are, as well as the abelian varieties $A_f$ and $A_{f_\mathrm{GL_2}}$ attached to $f$ and $f_\mathrm{GL_2}$ by the Eichler--Shimura construction. The two abelian varieties $A_f$ and $A_{f_\mathrm{GL_2}}$ are defined over $\Q$ and are isogenous. Combining Proposition \ref{prop3.1} and \cite[Proposition (4.1.8)]{OhtaC} for $C_m$, 
Corollary \ref{coro3.3} and \cite[Proposition (1.1.5)]{OhtaMA} for $A_m$, Corollary \ref{coro3.4} and the last paragraph of \cite[\S1.1]{OhtaMA} for $B_m$, 
we see that the factors appearing on the two abelian varieties that we want to compare are the same, concluding the proof. 
\end{proof} 

\begin{remark} With an abuse of notation, we write $f_\mathrm{GL_2}\mapsto f$ the Jacquet--Langlands correspondence in the proof of Proposition \ref{isogenyprop}; however, note that we canonically 
only have a map which takes the subspace $\langle f_\mathrm{GL_2}\rangle $ of the space of weight $2$ modular forms on $\GL_2$ of level $\Gamma_1(MDp^m)$ to the subspace $\langle f\rangle$ of $S_2(\Gamma_m,\C)$.
\end{remark}

\begin{lemma} 
The abelian variety $A_m$ has semistable reduction over $\Q_p(\zeta_{p^m})$.
\end{lemma}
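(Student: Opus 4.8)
The plan is to reduce the statement to the corresponding fact for classical modular abelian varieties by means of the Jacquet--Langlands correspondence, and then to invoke the semistable reduction of modular curves of level $p^m$ over $\Q_p(\zeta_{p^m})$. First I would recall that having semistable reduction over a fixed local field $L$ is an isogeny invariant: by Grothendieck's criterion, an abelian variety $A/L$ has semistable reduction if and only if the inertia subgroup $I_L$ acts unipotently on the Tate module $V_\ell(A)=T_\ell(A)\otimes_{\Z_\ell}\Q_\ell$ for some (equivalently any) prime $\ell\neq p$. Since isogenous abelian varieties have isomorphic rational Tate modules as $\Gal(\overline{L}/L)$-representations, and since $V_\ell$ of a sub- or quotient abelian variety is a subquotient Galois module of $V_\ell(A)$, the property is preserved under isogeny and passes to isogeny factors (both sub- and quotient abelian varieties).

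By Proposition \ref{isogenyprop} there is a $\Q$-isogeny $A_m\sim A_m^{\Oh,\new}$, so it suffices to establish semistable reduction over $\Q_p(\zeta_{p^m})$ for $A_m^{\Oh,\new}$. By construction $A_m^{\Oh,\new}$ is an isogeny factor of the modular Jacobian $J_1(MDp^m)=\mathrm{Jac}(X_1(MDp^m))$: indeed $A_m^\Oh$ is a quotient of $J_1(MDp^m)$ by Ohta's construction in \cite[\S1.1]{OhtaMA}, and the $D$-new part $A_m^{\Oh,\new}$ is, up to isogeny, an abelian subvariety of $A_m^\Oh$. Hence, by the previous paragraph, it is enough to know that $J_1(MDp^m)$ acquires semistable reduction over $\Q_p(\zeta_{p^m})$.

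This last statement is the key input, and it is classical: the level-$p^m$ modular curve $X_1(MDp^m)$ (more precisely the relevant Deligne--Rapoport/Katz--Mazur model) becomes semistable after base change to $\Z_p[\zeta_{p^m}]$, the $p^m$-th roots of unity being exactly what one must adjoin to resolve the ramification introduced by the $\Gamma_1(p^m)$-structure at $p$; consequently its Jacobian has semistable reduction over $\Q_p(\zeta_{p^m})$. This is precisely the role of the field $\Q_p(\zeta_{p^m})$, matching the regular model $\mathcal{X}_m^\reg$ over $\Z_p[\zeta_{p^m}]$ recalled earlier in the text (and, for $m=1$, Buzzard's description in \cite[Theorem 4.10]{Buz} of the special fiber as two smooth components crossing transversally at the supersingular points, which is manifestly semistable). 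The corresponding statement is already used by Ohta in \cite{OhtaMA, OhtaC}.

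The main obstacle I anticipate is the semistable reduction of the level-$p^m$ modular curve itself for general $m$: for $m=1$ it is immediate from the transverse crossing of two good-reduction components, but for higher $m$ one must appeal to the full Katz--Mazur theory of the minimal regular model and verify that its special fiber is reduced with only nodal singularities after base change to $\Z_p[\zeta_{p^m}]$. A possible alternative, avoiding the passage to $\GL_2$, would be to work directly with $\mathcal{X}_m^\reg/\Z_p[\zeta_{p^m}]$, show that its special fiber is semistable, and conclude that $J_m$ — and hence its quotient $A_m$ — has semistable reduction; but establishing semistability of the special fiber of $\mathcal{X}_m^\reg$ for all $m$ seems no easier than the modular-curve input, which is why I would route the argument through Proposition \ref{isogenyprop}.
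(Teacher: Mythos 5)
Your reduction steps are fine, and in spirit they match the paper's strategy: semistable reduction is detected by unipotence of the inertia action on the rational Tate module, it is an isogeny invariant and passes to isogeny factors, and by Proposition \ref{isogenyprop} one may work on the $\GL_2$-side. The gap is your key input: it is \emph{false} for $m\geq 2$ that $J_1(MDp^m)$ (equivalently, the modular curve $X_1(MDp^m)$) acquires semistable reduction over $\Q_p(\zeta_{p^m})$. The full Jacobian contains the abelian varieties $A_g$ of \emph{all} newforms of level dividing $MDp^m$, in particular forms whose local automorphic component at $p$ is supercuspidal of conductor $p^2$. Such a component (for $p$ odd) is dihedral, $\mathrm{Ind}_{W_E}^{W_{\Q_p}}\theta$ for a quadratic extension $E/\Q_p$, and one checks that inertia of $L=\Q_p(\zeta_{p^m})$ can never act unipotently on it: if $E$ is ramified and $E\not\subset L$ (only one of the two ramified quadratic extensions of $\Q_p$ lies in the cyclotomic tower), then $LE/L$ stays ramified and the restriction to $I_L$ is $\mathrm{Ind}_{I_{LE}}^{I_L}(\theta|_{I_{LE}})$, which always contains the nontrivial quadratic character of $I_L/I_{LE}$; if $E$ is unramified, $\theta|_{I_E}$ is a tame character of order $d\mid p^2-1$, $d\nmid p-1$, while the image of $I_L$ in the tame quotient is the $(p-1)$-st power subgroup, so $\theta$ stays nontrivial on $I_L$. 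Since these representations have finite inertia image, "unipotent" would mean "trivial", so the corresponding factors of $J_1(MDp^m)$ are not semistable over $\Q_p(\zeta_{p^m})$. This is consistent with the geometry: the Katz--Mazur special fibers in level $p^m$, $m\geq 2$, are non-reduced, and the stable reduction field of $X_0(p^2)$ (Edixhoven, Coleman--McMurdy) already has ramification degree divisible by $p+1$, which no cyclotomic field provides. For the same reason your proposed alternative (proving semistability of $\mathcal{X}_m^\reg$ over $\Z_p[\zeta_{p^m}]$) cannot work: $J_m$ itself has supercuspidal-at-$p$ factors (these are $D$-new, hence survive Jacquet--Langlands), so no model of $X_m$ over $\Z_p[\zeta_{p^m}]$ is semistable for $m\geq 2$.

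What saves the lemma, and what the paper's proof exploits, is that $A_m$ is much smaller than $J_m$: by Corollary \ref{coro3.3} it is isogenous to the product of $J_1$ and the abelian varieties $A_f$ of the forms $f\in\mathcal{P}_m$, i.e.\ those whose nebentypus has conductor $p^i$ equal to the full $p$-part of the level. For such $f$ the local component at $p$ is forced to be a principal series $\pi(\chi_1,\chi_2)$ with $\chi_1$ unramified and $\chi_2$ of conductor $p^i$, and this is exactly the situation where Mazur--Wiles applies: via Jacquet--Langlands and \cite[Chapter 3, Section 2, Proposition 1]{MW}, \cite[page 562]{OhtaMA}, the variety $A_f$ acquires (even good) reduction over $\Q_p(\zeta_{p^i})\subseteq\Q_p(\zeta_{p^m})$; and $J_1$ is semistable already over $\Q_p$ by \cite[Theorem 4.10]{Buz}. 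Your outline could be repaired by replacing the full Jacobian $J_1(MDp^m)$ with Ohta's quotient $A_m^{\Oh}$ (the variety denoted $\mathcal{A}_r$ in \cite{OhtaMA}), for which the semistability over $\Q_p(\zeta_{p^m})$ is actually established -- it is a theorem about this particular quotient, never about the full Jacobian -- after which your isogeny-invariance argument goes through; this is in substance the paper's proof.
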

\begin{proof} 
By Corollary \ref{coro3.3}, we know that $A_m$ is isogenous to the direct sum of  
$J_1$ and the abelian varieties corresponding to newforms in $\mathcal{P}_m$. If $f\in \mathcal{P}_m$, then using the Jacquet--Langlands correspondence as in the proof of Proposition \ref{isogenyprop}, there is a unique newform $f_\mathrm{GL_2}$ of level $\Gamma_0(MD)\cap\Gamma_1(p^i)$ and primitive character of conductor $p^i$ for some $2\leq i\leq m$ such that the abelian varieties $A_f$ and $A_{f_\mathrm{GL_2}}$, associated by Eichler--Shimura construction 
to $f$ and $f_\mathrm{GL_2}$ respectively, are isogenous. Now $A_{f_\mathrm{GL_2}}$ has semistable reduction over $\Q_p(\zeta_{p^m})$ (because a toric quotient of it has good reduction, see  
\cite[Chapter 3, Section 2, Proposition 1]{MW} and \cite[page 562]{OhtaMA} for details) and therefore the same is true for $A_f$. The conclusion follows because $J_1$ has semistable reduction (already over $\Q$) by \cite[Theorem 4.10]{Buz}.  
\end{proof}

Let $\mathcal{S}$ denote the $Mp$-old subspace of $S_2(\Gamma_1,\C)\subseteq S_2(\Gamma_m,\C)$,  
spanned by all $f|v_d$ for all divisors $d\mid Mp$ and newforms $f$ of the relevant level (here $v_d$ is defined 
as in \S\ref{stabilization} via the matrices $\smallmat d001$, see \cite[\S2.1]{Hijikata} for details). 
Then there is a unique quotient $F_m$ of $J_m$ by a $\Q$-rational abelian subvariety such that the pull-back of $\Cot(F_m)_\C$ via the cotangent of the canonical projection corresponds to $\mathcal{S}$; this abelian variety $F_m$ is isogenous to the product of abelian subvarieties (possibly with multiplicity) of the forms in $\mathcal{S}$. By Lemma \ref{lemma3.1}, the canonical homomorphism $J_m\rightarrow F_m\times B_m$ factors through $\alpha_m$. By comparing Corollary \ref{coro3.3} and Corollary \ref{coro3.4}, we see that the cokernel of the map $\Cot(q_m)\colon 
\Cot(B_m)_\C\rightarrow\Cot(A_m)_\C$ is isomorphic to $\mathcal{S}$, and therefore the canonical homomorphism $J_m\rightarrow F_m\times B_m$
induces an isogeny \begin{equation}\label{isogeny}
A_m\longrightarrow F_m\times B_m.\end{equation} 

\subsection{$p$-divisible groups}
For an abelian variety $A$, denote $A[p^\infty]$ its $p$-divisible group and $\Ta_p(A)$ its $p$-adic Tate module. {Let $e^\ord=\lim_{n\to\infty} U_p^{n!}$ denote the Hida ordinary projector.}

\begin{proposition}\label{isopdiv1}
The map $\alpha_m$ induces an isomorphism $e^\ord J_m[p^\infty]\simeq e^\ord A_m[p^\infty]$ of $p$-divisible groups, $\forall m\geq 1$.  
\end{proposition}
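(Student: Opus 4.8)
The plan is to realize $\alpha_m$ as the quotient map with kernel $\pi_m^*(K_m)^0$ and then reduce the statement to the vanishing of the ordinary part of this kernel. First I would record the short exact sequence of abelian varieties
\[0\longrightarrow \pi_m^*(K_m)^0\longrightarrow J_m\overset{\alpha_m}\longrightarrow A_m\longrightarrow 0,\]
which yields an exact sequence of $p$-divisible groups
\[0\longrightarrow (\pi_m^*(K_m)^0)[p^\infty]\longrightarrow J_m[p^\infty]\longrightarrow A_m[p^\infty]\longrightarrow 0.\]
Since the construction of $A_m$ is Hecke-equivariant, $\alpha_m$ commutes with $U_p$, hence with the idempotent $e^\ord=\lim_n U_p^{n!}$ acting on the associated Tate modules. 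As $e^\ord$ is an idempotent in the ($p$-adically complete) endomorphism ring of each $p$-divisible group, it is exact, so applying it gives
\[0\longrightarrow e^\ord(\pi_m^*(K_m)^0)[p^\infty]\longrightarrow e^\ord J_m[p^\infty]\overset{e^\ord\alpha_m}\longrightarrow e^\ord A_m[p^\infty]\longrightarrow 0.\]
Because $\alpha_m$ is surjective, so is $e^\ord\alpha_m$, and it is an isomorphism precisely when $e^\ord(\pi_m^*(K_m)^0)[p^\infty]=0$. Thus everything reduces to showing that the ordinary projector annihilates the $p$-divisible group of the quotiented subvariety.

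I would prove this vanishing by induction on $m$, mirroring the recursive definition of the family $\alpha_m$. The base case $m=1$ is immediate, since $A_1=J_1$ and $\alpha_1=\mathrm{id}$. For the inductive step, the definition $K_m=\ker(\alpha_{m-1}\circ\varpi_{m,*})$ together with the inductive hypothesis for $\alpha_{m-1}$ pins down $K_m$ on ordinary parts, and the remaining task is to show that $U_p$ acts with strictly positive slopes on $\pi_m^*(K_m)^0$, equivalently that this abelian variety has no ordinary part.

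To identify $\pi_m^*(K_m)^0$ I would use the cotangent computations already in hand. By Corollary \ref{coro3.3}, $\alpha_m^*$ realizes $\Cot(A_m)_\C$ as $\mathcal{P}_m\oplus S_2(\Gamma_1,\C)$ inside $S_2(\Gamma_m,\C)=\Cot(J_m)_\C$, so $\Cot(\pi_m^*(K_m)^0)_\C$ is the complementary subspace, consisting of the \emph{extra $p$-old copies} (the degeneracy images beyond level $\Gamma_1$) of eigenforms. On each such $p$-old packet $U_p$ acts through the non-unit root of the Hecke polynomial, so the ordinary projector kills it; the use of connected components $(\cdot)^0$ and the precise recursion are exactly what guarantee that only these non-unit-root directions are removed. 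Rather than redo this slope analysis by hand, I would note that the vanishing $e^\ord G[p^\infty]=0$ is an isogeny invariant and transport the computation to $\GL_2$: by Proposition \ref{isogenyprop} (Jacquet--Langlands) the variety $A_m$, and likewise $J_m$, is $\Q$-isogenous in a Hecke-equivariant way to its $\GL_2$ counterpart $A_m^{\Oh,\new}$, resp. $J_1^\new(MDp^m)$, so $\pi_m^*(K_m)^0$ is isogenous to the corresponding kernel in Ohta's construction, for which the desired vanishing is \cite[\S1.1]{OhtaMA} (compare \cite[\S2]{Ohta-ES}).

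The main obstacle is precisely this key vanishing $e^\ord(\pi_m^*(K_m)^0)[p^\infty]=0$: proving directly that $U_p$ has positive slopes on the quotiented subvariety requires controlling the interaction between $U_p$, the pullback $\pi_m^*$, and the trace $\varpi_{m,*}$ (as in the identities of Lemma \ref{lemmatrace}) and keeping track of connected components through the recursion. Since this is exactly the content of Ohta's argument in the $\GL_2$-setting, I expect the cleanest route to be the isogeny-invariance reduction via Jacquet--Langlands just described, which lets us cite the $\GL_2$ statement rather than reproduce the induction.
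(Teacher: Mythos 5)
Your proposal is correct and takes essentially the same route as the paper: both reduce, via the exact sequence of $p$-divisible groups coming from the connectedness of the kernel $\pi_m^*(K_m)^0$, to showing that the ordinary projector kills that kernel, and both establish this vanishing by transferring the computation to $\GL_2$ through Corollary \ref{coro3.3} and the Jacquet--Langlands correspondence (the paper's proof cites Hida's $\GL_2$ result, and its accompanying remark gives precisely your Ohta-based variant via Proposition \ref{isogenyprop}). The inductive scaffolding you sketch and then discard is unnecessary, as you yourself observe.
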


\begin{proof} Write $\mathcal{K}_m=\pi^*_m(K_m)^0$ for the kernel of $\alpha_m\colon J_m\rightarrow A_m$ to simplify the notation. Since $\mathcal{K}_m$ is connected, we have an exact sequence of $p$-divisible groups 
\[0\longrightarrow \mathcal{K}_m[p^\infty]\longrightarrow J_m[p^\infty]\longrightarrow A_m[p^\infty]\longrightarrow 0.\]
It is enough to show that $U_p$ does not have $p$-adic unit eigenvalues on \[\Ta_p(\mathcal{K}_m)\otimes_{\Z_p}\Q_p\simeq H_1(\mathcal{K}_m(\C),\Q)\otimes_\Q\Q_p\] and on \[\Cot(\mathcal{K}_m)_\C\oplus\overline{\Cot(\mathcal{K}_m)}_\C\simeq H^1(\mathcal{K}_m(\C),\C).\]
Thanks to Corollary \ref{coro3.3}, this follows from Jacquet--Langlands and the analogue result \cite[Proposition 4.1]{hida-measure} of Hida for $\GL_2$. 
\end{proof}

\begin{remark} Alternatively, since the Hecke operator $U_p$ is equivariant with respect to the Jacquet--Langlands correspondence, Proposition \ref{isopdiv1}
can also be obtained using a slightly different argument. Take the $D$-new quotient $J_1^\new(MDp^m)$ 
of the Jacobian $J_1(MDp^m)$, which is isogenous to $J_m$; since $e^\ord$ commutes with taking the $D$-new quotient, so from \cite[Proposition (1.2.1)]{OhtaMA} we obtain an isogeny between $e^\ord J_m[p^\infty]$ and $e^\ord A^{\Oh,\new}_m[p^\infty]$ . The result follows then from 
Proposition \ref{isogenyprop}.
\end{remark}
 
Let $\mathcal{A}_m$ be a N\'eron model of $A_m$ over $\Z[\zeta_{p^m}]$. 
We denote $\mathcal{A}_m^0$ the connected component of $\mathcal{A}_m$ containing the identity (\cite[\S1b)]{Mazur-Rational}). 
For each $n\geq 1$ let $\mathcal{A}^0_m[p^n]$ be the connected part of the $p^n$-torsion subgroup scheme $\mathcal{A}_m[p^n]$ of $\mathcal{A}_m$, let $(\mathcal{A}_m^0[p^n])^\mathrm{f}$ be the Grothendieck \emph{fixed part} (for the definition, see \cite[IX (2.2.2)]{Groth} -- more generally for any $p$-divisible group $G$ we denote $G^\mathrm{f}$ its fixed part) and define the $p$-divisible group 
\[\mathcal{A}^0_m[p^\infty]^\mathrm{f}\defeq\dirlim_n(\mathcal{A}_m^0[p^n])^\mathrm{f}.\]
Finally, define 
\[\mathcal{G}_m\defeq e^\ord\mathcal{A}^0_m[p^\infty]^\mathrm{f}\]
and let $\mathcal{G}_m^0$ be the connected part of $\mathcal{G}_m$.

\begin{proposition}\label{multiplicativetype}
$\mathcal{G}_m^0$ is of multiplicative type and its height is half of the height of $e^\ord J_m[p^\infty]$. 
\end{proposition}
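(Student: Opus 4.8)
The plan is to transport the statement to the quotient $A_m$ and then read off the structure of its ordinary $p$-divisible group from Jacquet--Langlands together with Ohta's $\GL_2$ theory. First I would invoke Proposition \ref{isopdiv1} to replace $e^\ord J_m[p^\infty]$ by the isomorphic $e^\ord A_m[p^\infty]$; in particular the two have the same height, so it suffices to analyze the fixed part attached to $A_m$. Since $A_m$ has semistable reduction over $\Q_p(\zeta_{p^m})$, as shown in the lemma above, the connected Néron special fiber $\mathcal{A}^0_m\otimes\overline{\F}_p$ is a semi-abelian variety, an extension of an abelian variety $B'$ by a torus $T'$. By Grothendieck's theory \cite[IX (2.2.2)]{Groth} the fixed part $\mathcal{A}^0_m[p^\infty]^\mathrm{f}$ is governed by this semi-abelian scheme, fitting into an exact sequence $0\to T'[p^\infty]\to \mathcal{A}^0_m[p^\infty]^\mathrm{f}\to B'[p^\infty]\to 0$.

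For the multiplicativity I would argue that, after applying $e^\ord$, the abelian part $B'$ has good \emph{ordinary} reduction. This is the content of Hida's ordinarity, transported through Jacquet--Langlands via the isogeny $J_m\sim J_1^\new(MDp^m)$ and Proposition \ref{isogenyprop}; it is the Shimura-curve analogue of \cite[Proposition (1.2.1)]{OhtaMA} and \cite[Proposition 4.1]{hida-measure}. Granting this, the connected part of $B'[p^\infty]$ is of multiplicative type, and since $T'[p^\infty]=\mu_{p^\infty}^{\dim T'}$ is already multiplicative, the connected part $\mathcal{G}_m^0$ of $\mathcal{G}_m=e^\ord\mathcal{A}^0_m[p^\infty]^\mathrm{f}$ is an extension of a multiplicative group by a multiplicative group, hence itself of multiplicative type.

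For the height I would use that, via Proposition \ref{isopdiv1} and Jacquet--Langlands, $e^\ord A_m[p^\infty]\cong e^\ord J_m[p^\infty]$ is isogenous to $e^\ord J_1^\new(MDp^m)[p^\infty]$, whose Galois structure is governed by Hida--Mazur--Wiles: it is an extension of an unramified $p$-divisible group by one of multiplicative type, the two summands having equal height, the multiplicative sub being exactly $\mathcal{G}_m^0$. To make the equality of these heights self-contained one can instead invoke the autoduality of $J_m$ coming from the principal polarization of the Jacobian, transported to $e^\ord A_m[p^\infty]$: the Weil pairing satisfies $\langle U_p x,y\rangle=\langle x,U_p^* y\rangle$, so it restricts to a perfect pairing between $e^\ord\Ta_p J_m$ and $e^{\ord *}\Ta_p J_m$ valued in $\Z_p(1)$, matching the multiplicative part of one factor with the unramified quotient of the other. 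The Atkin--Lehner element $\tau_m$ intertwines $U_p$ and $U_p^*$ by Lemma \ref{Ohtaformulas} and, interchanging the canonical and anticanonical components of the special fiber, swaps the multiplicative and étale parts; combining the two facts forces the multiplicative and unramified parts to have the same height, which is therefore half of the total.

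The step I expect to be the main obstacle is ensuring that the \emph{fixed-part} functor, the ordinary projector, and the Atkin--Lehner involution interact in the expected way after transport through Jacquet--Langlands, i.e.\ that the connected component of $e^\ord\mathcal{A}^0_m[p^\infty]^\mathrm{f}$ is \emph{precisely} the multiplicative half and not merely contained in it. This is exactly the point at which Ohta's careful bookkeeping over modular curves \cite{OhtaMA, OhtaC} is required, and I would reduce to it through Proposition \ref{isogenyprop} rather than recompute the monodromy filtration of $A_m$ by hand.
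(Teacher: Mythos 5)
Your multiplicativity argument is workable and close in spirit to the paper's: the paper likewise reduces to $\GL_2$ via Proposition \ref{isogenyprop} and Ohta's \cite[Proposition (1.2.4)]{OhtaMA}, though it splits $A_m$ by the isogeny \eqref{isogeny} into $F_m\times B_m$ (toric factor plus good quotient) rather than invoking semistable reduction of $A_m$ directly; since multiplicative type is inherited by sub- and quotient $p$-divisible groups, transporting it through the $D$-new projector is harmless. The genuine gap is in the height claim, in both of your routes. Route A asserts that the Mazur--Wiles/Ohta structure of $e^\ord J_1(MDp^m)[p^\infty]$ (extension of an unramified $p$-divisible group by one of multiplicative type, of equal heights) passes to the $D$-new part. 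This is not automatic: if $G=G^{\mathrm{new}}\oplus G^{\mathrm{old}}$ is a Hecke-stable isogeny decomposition and the multiplicative part of $G$ has height $\tfrac12\,\mathrm{ht}(G)$, the multiplicative parts of the summands need not each be half of their own summand; only their sum is constrained. This is exactly where the paper inserts its dimension-counting argument (the heights of $e^\ord(\mathcal{B}_m^{\Oh,\mathrm{new}})[p^\infty]$ and $e^\ord(\mathcal{B}_m^{\Oh}/\mathcal{B}_m^{\Oh,\mathrm{new}})[p^\infty]$ are bounded below by the corresponding dimensions, and the totals match, forcing equality), and your proposal supplies no substitute for it.

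Route B, offered as the self-contained alternative, is circular. Galois-equivariance of the Weil pairing between $e^\ord\Ta_p(J_m)$ and $e^{\ord,*}\Ta_p(J_m)$ shows that the two multiplicative parts are orthogonal, whence an \emph{injection} $\Ta_p(\mathcal{G}_m^0)\hookrightarrow\Hom(Q_m^*,\Z_p(1))$; together with Atkin--Lehner identifying the two multiplicative heights, this yields only the inequality $\mathrm{ht}(\mathcal{G}_m^0)\le\tfrac12\,\mathrm{ht}\bigl(e^\ord J_m[p^\infty]\bigr)$. The ``matching'' you invoke --- perfectness of the induced pairing between the multiplicative part and the unramified quotient --- is equivalent to the statement being proven: in the paper this perfectness is Proposition \ref{prop1.2.8}(2), whose proof cites Proposition \ref{multiplicativetype} for the rank count, so it cannot be used as an input here. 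Moreover, the assertion that $\tau_m$ ``swaps the multiplicative and \'etale parts'' is incorrect: $\omega_m$ is defined over $\Q(\zeta_{MDp^m})$, which is unramified at $p$ over $\Q(\zeta_{p^m})$, so it carries multiplicative parts to multiplicative parts (it matches the $U_p$-ordinary multiplicative part with the $U_p^*$-ordinary one); it does not exchange connected and \'etale pieces. Note that your two routes would in fact combine into a correct proof --- the duality/Atkin--Lehner bound gives $\le\tfrac12$ on each Hecke-stable factor of the modular Jacobian, and Mazur--Wiles/Ohta gives equality for the total, forcing equality on the $D$-new factor --- but that gluing step is precisely what is missing from the proposal as written.
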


\begin{proof} The proof is similar to \cite[Proposition (1.2.4)]{OhtaMA}. Let $\mathcal{B}_m$ and $\mathcal{F}_m$ be Néron models of $B_m$ and $F_m$, respectively, over $\Z[\zeta_{p^m}]$. Recall the isogeny $A_m\rightarrow F_m\times B_m$ introduced in \eqref{isogeny}, from which we obtain an isogeny of $p$-divisible groups
\[\mathcal{G}_m\longrightarrow (e^\ord \mathcal{F}_m^0[p^\infty]^\mathrm{f})\times_{\Z[\zeta_{p^m}]}(e^\ord \mathcal{B}^0_m[p^\infty]^\mathrm{f}).\] It is therefore enough to prove the result for the two $p$-divisible groups 
$e^\ord \mathcal{F}_m^0[p^\infty]^\mathrm{f}$ and $e^\ord \mathcal{B}^0_m[p^\infty]^\mathrm{f}$. The argument for both is similar. 

For $e^\ord\mathcal{F}_m^0[p^\infty]^\mathrm{f}$, the result follows
from the Jacquet--Langlands correspondence, which is compatible with the action of $U_p$ on both sides, and the analogue result \cite[Proposition (1.2.4)]{OhtaMA} for the abelian varieties associated with forms on $\GL_2$; more explicitly, $\mathcal{F}_m^0[p^\infty]^\mathrm{f}\otimes\mathbb{F}_p$ is the $p$-divisible group associated with a torus, so it is of multiplicative type and its height is equal to half of the dimension of $F_m$; furthermore, $U_p$ acts invertibly on it by \cite[Lemma 3.2]{hida-measure}, so $\mathcal{F}_m^0[p^\infty]^\mathrm{f}=e^\ord \mathcal{F}_m^0[p^\infty]^\mathrm{f}$.  

By Proposition \ref{isogenyprop}, $B_m$ is isogenous to the abelian subvariety $B_m^{\Oh,\new}$ of $B_m^\Oh$; let $\mathcal{B}_m^{\Oh,\new}$ and $\mathcal{B}_m^\Oh$ be Néron models of $B_m^{\Oh,\new}$ and $B_m^\Oh$, respectively, over $\Z[\zeta_{p^m}]$. Now 
$e^\ord(\mathcal{B}_m^{\Oh})^0[p^\infty]^\mathrm{f}$ is equal to $e^\ord \mathcal{B}_m^{\Oh}[p^\infty]$ and is of multiplicative type (see the proof of \cite[Proposition (1.2.4)]{OhtaMA}), so the same is true $e^\ord \mathcal{B}^0_m[p^\infty]^\mathrm{f}$. 
Now, the height of $e^\ord \mathcal{B}_m^\Oh[p^\infty]$ is half of the dimension of $B_m^{\Oh}$ (this follows from the proof of \cite[Proposition (1.2.4)]{OhtaMA}). The heights of the $p$-divisible groups $e^\ord(\mathcal{B}_m^{\Oh,\new})[p^\infty]$ and $e^\ord(\mathcal{B}_m^{\Oh}/\mathcal{B}_m^{\Oh,\new})[p^\infty]$ cannot be smaller than the dimensions of $B_m^{\Oh,\new}$ and $B_m^{\Oh}/B_m^{\Oh,\new}$, respectively; so both must be equal to the respective dimensions, and in particular  
the height of $e^\ord(\mathcal{B}_m^{\Oh,\new})[p^\infty]$ is equal to the dimension of $B_m^{\Oh,\new}$. The result follows from Proposition \ref{isogenyprop}.
\end{proof}

\begin{remark}
Alternatively, one can prove the statement regarding $e^\ord \mathcal{B}^0_m[p^\infty]^\mathrm{f}$ 
in the proof of the previous proposition by the same argument of \cite[Proposition (1.2.4)]{OhtaMA} which uses Igusa towers. Of course, this approach has the advantage of being adapted to more general Shimura varieties. 
\end{remark}

\subsection{Twisted Tate pairings} 
Recall that $\tau_m$ denotes the Atkin--Lehner involution of $S_2(\Gamma_m,\C)$ and let $\zeta_{MDp^m}$ be a primitive $MDp^m$-root of unity.

\begin{lemma}\label{lemmmaAL}
   The Atkin--Lehner involution $\tau_m$ induces an involution of $J_m$ which is defined over $\Q(\zeta_{MDp^m})$ and for each $\sigma\in \Gal(\Q(\zeta_{MDp^m})/\Q)$ we have $\tau_m^\sigma=\sigma\circ\langle \chi_\cyc(\sigma)\rangle$, where $\chi_\cyc$ is the cyclotomic character.
\end{lemma}
\begin{proof}
   These facts are known for Jacobians $J_1(MDp^m)$ of modular curves, 
thanks to \cite[Proposition 7.8]{Shimura78} or \cite[\S3.1]{OhtaC}, and one can deduce the corresponding assertions for Jacobians of Shimura curves by taking the $D$-new quotient, and noticing that 
the Atkin--Lehner involution on the modular Jacobians takes old forms to old forms and is equivariant with respect to the Jacquet--Langlands correspondence $f_{\mathrm{GL}_2}\mapsto f$. See \cite[Theorem 1.2]{BD96} for more details, recalling that $D$ is a square-free product of an even number of prime factors to fix the sign change in \emph{loc. cit.}  
\end{proof}

Let $(\cdot,\cdot)_m$ be the usual Weil pairing on $\Ta_p(J_m)$; this is the restriction to the 
$D$-new quotient of the usual Weil pairing on the modular Jacobian, denoted with the same symbol $(\cdot,\cdot)_m$. 
Define the \emph{twisted Tate pairing} by \[[x,y]_m\defeq (x,\tau_my)_m.\] 

\begin{lemma}\label{lemmapairing}\begin{enumerate}
\item 
$[x^\sigma,\langle a\rangle y^\sigma]_m=[x,y]_m^{\chi_\cyc(\sigma)}$.
\item $[\cdot,\cdot]_m$ induces a perfect pairing on $e^\ord\Ta_p(J_m)$. \end{enumerate}
\end{lemma}
\begin{proof}
    (1) The stated formula holds if we replace $J_m$ by the modular Jacobian $J_1(MDp^m)$ (\cite[(1.2.7)]{OhtaMA}, and the lines preceding it). Taking the $D$-new quotient, we see that the same formula also holds for $e^\ord\Ta_p(J_m)$.
    
    (2) We know that $[Tx,y]_m=[x,Ty]_m$ on the modular jacobian  for any Hecke operator $T$ 
     (\cite[Lemma 4.1]{Til}), so taking $D$-new quotients we see that the same formula holds for $\Ta_p(J_m)$ and this implies that the pairing on $\Ta_p(J_m)$ (which is perfect by \emph{loc. cit.}) induces a perfect pairing on $e^\ord\Ta_p(J_m)$. 
\end{proof}

Let $Q_m$ be defined by the exactness of the following sequence: 
\begin{equation}\label{defQm}
0\longrightarrow \Ta_p(\mathcal{G}_m^0)\longrightarrow e^\ord\Ta_p(J_m)\longrightarrow Q_m\longrightarrow0,\end{equation}
where the first map is induced by Proposition \ref{isopdiv1}.

\begin{proposition}\label{prop1.2.8}
\begin{enumerate}
    \item $Q_m$ is a free $\Z_p$-module of finite rank. 
    \item $\Ta_p(\mathcal{G}_m^0)$ is isotropic with respect to the twisted Weil pairing 
    $[\cdot,\cdot]_m$, which induces a perfect pairing between $\Ta_p(\mathcal{G}_m^0)$ and $Q_m$. 
    \item $\Ta_p(\mathcal{G}_m^0)$ is stable under the action of $G_{\Q_p}=\Gal(\overline{\Q}_p/\Q_p)$. Any $\sigma$ in the inertia subgroup $I_{\Q_p}$ of $G_{\Q_p}$ acts on 
    $\Ta_p(\mathcal{G}_m^0)$ by $\chi_\cyc(\sigma)$ and acts on $Q_m$ by $\langle a\rangle$ for 
    $a=\chi_\cyc^{-1}(\sigma)$.
\end{enumerate}
\end{proposition}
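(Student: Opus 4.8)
The plan is to mirror the proof of \cite[Proposition (1.2.8)]{OhtaMA}, feeding in the structural results already established and reducing to the $\GL_2$-case through Jacquet--Langlands whenever an \emph{integral} statement about the pairing or the Galois action is required. Recall from the discussion preceding Lemma \ref{lemmapairing} that the Weil pairing $(\cdot,\cdot)_m$, and hence the twisted pairing $[\cdot,\cdot]_m$, on $e^\ord\Ta_p(J_m)$ is the restriction to the $D$-new quotient of the corresponding pairing on $J_1(MDp^m)$, and that $\mathcal{G}_m^0$ is identified, via Propositions \ref{isogenyprop} and \ref{isopdiv1}, with the multiplicative-type $p$-divisible group attached to $B_m^{\Oh,\new}$; this is what lets us import Ohta's computations. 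For part (1), I would note that the sequence \eqref{defQm} is obtained by applying $\Ta_p(-)$ to a short exact sequence of $p$-divisible groups: over the generic fibre the connected--étale sequence of $\mathcal{G}_m$ realises $\mathcal{G}_m^0$ as a $p$-divisible subgroup of $e^\ord J_m[p^\infty]$ with $p$-divisible cokernel, so $\Ta_p(\mathcal{G}_m^0)$ is saturated in the free $\Z_p$-module $e^\ord\Ta_p(J_m)$ and $Q_m$ is the (free, finite-rank) Tate module of that cokernel.

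For part (2) the two ingredients are isotropy and a rank count. Isotropy of $\Ta_p(\mathcal{G}_m^0)$ is the heart of the matter: since $\mathcal{G}_m^0$ is of multiplicative type (Proposition \ref{multiplicativetype}) its Cartier dual is étale, so the twisted Weil pairing pairs the multiplicative part against an étale complement rather than against itself. This is precisely the self-orthogonality proved in \cite[Proposition (1.2.8)]{OhtaMA} for the modular Jacobian, and it transfers to our situation because $[\cdot,\cdot]_m$ and the Atkin--Lehner twist are compatible with Jacquet--Langlands by Lemma \ref{lemmmaAL}. Granting isotropy, Proposition \ref{multiplicativetype} gives $\rank_{\Z_p}\Ta_p(\mathcal{G}_m^0)=\tfrac{1}{2}\rank_{\Z_p}e^\ord\Ta_p(J_m)$; together with the saturation from part (1) and the perfectness of $[\cdot,\cdot]_m$ on $e^\ord\Ta_p(J_m)$ (Lemma \ref{lemmapairing}(2)) this forces $\Ta_p(\mathcal{G}_m^0)$ to equal its own orthogonal complement, whence $[\cdot,\cdot]_m$ descends to a perfect pairing between $\Ta_p(\mathcal{G}_m^0)$ and $Q_m$ valued in $\Z_p(1)$.

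For part (3), the $G_{\Q_p}$-stability of $\Ta_p(\mathcal{G}_m^0)$ follows from its description as a step of the monodromy filtration on the $G_{\Q_p}$-representation $e^\ord\Ta_p(J_m)$: although $A_m$ acquires semistable reduction only over $\Q_p(\zeta_{p^m})$, that filtration is canonical and hence $G_{\Q_p}$-equivariant. That $I_{\Q_p}$ acts on $\Ta_p(\mathcal{G}_m^0)$ through $\chi_\cyc$ is the defining feature of a multiplicative-type $p$-divisible group, whose Tate module becomes $\Z_p(1)^{\oplus h}$ after an unramified base change. Finally, the action on $Q_m$ is read off from the perfect pairing: for $\sigma\in I_{\Q_p}$ with $\zeta_{MDp^m}^{\sigma^{-1}}=\zeta_{MDp^m}^a$ one has $\zeta_{MDp^m}^\sigma=\zeta_{MDp^m}^{a^{-1}}$ and $x^\sigma=\chi_\cyc(\sigma)x$ for $x\in\Ta_p(\mathcal{G}_m^0)$; substituting into Lemma \ref{lemmapairing}(1) the factors of $\chi_\cyc(\sigma)$ on the two sides cancel, leaving $[x,\langle a^{-1}\rangle y^\sigma]_m=[x,y]_m$ for all such $x$, and perfectness then gives $y^\sigma=\langle a\rangle y$ on $Q_m$.

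The step I expect to be the main obstacle is the isotropy in part (2), and specifically keeping it \emph{integral}: the isogeny of Proposition \ref{isogenyprop} is only a $\Q$-isogeny, so the self-orthogonality argument must be run on $e^\ord\Ta_p(J_m)$ itself---exploiting that its pairing is literally the restriction of the modular Weil pairing to the $D$-new part---rather than transported across the isogeny, where integrality of the pairing could degenerate. A secondary subtlety is confirming that $\Ta_p(\mathcal{G}_m^0)$ is genuinely $G_{\Q_p}$-stable even though the base field grows to $\Q_p(\zeta_{p^m})$; this is what the intrinsic monodromy-filtration description is designed to guarantee.
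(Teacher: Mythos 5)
Parts (1) and (2) of your proposal are essentially fine. For (2) you take a genuinely different route to isotropy than the paper: the paper picks $\sigma\in I_{\Q_p}$ with $\sigma\neq 1$ acting as the identity on $\Q(\zeta_{p^m})$, so that Lemma \ref{lemmapairing}(1) together with the $\chi_\cyc$-action on $\Ta_p(\mathcal{G}_m^0)$ coming from Proposition \ref{multiplicativetype} gives $\chi_\cyc(\sigma)^2[x,y]_m=\chi_\cyc(\sigma)[x,y]_m$, forcing $[x,y]_m=0$; you instead invoke Cartier-duality orthogonality and transfer of Ohta's self-orthogonality through Jacquet--Langlands. Both work, and your remark that isotropy (unlike perfectness) is a rational, isogeny-invariant statement, so that only the perfectness step must be run integrally on $e^\ord\Ta_p(J_m)$ itself, is exactly the right resolution of the worry you raise. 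The rank-count deduction of perfectness is the same as the paper's.

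The genuine gap is in part (3), at the step where you claim that ``$I_{\Q_p}$ acts on $\Ta_p(\mathcal{G}_m^0)$ through $\chi_\cyc$'' is \emph{the defining feature} of a multiplicative-type $p$-divisible group. Multiplicative type is a statement over $\mathcal{O}_L$ with $L=\Q_p(\zeta_{p^m})$, and it only pins down the action of $I_L\subseteq I_{\Q_p}$ as $\chi_\cyc$; the full inertia group $I_{\Q_p}$ acts only semilinearly (through the descent of $A_m$ to $\Q$), and its action may differ from $\chi_\cyc$ by an arbitrary character of the finite quotient $I_{\Q_p}/I_L\simeq\Gal(L/\Q_p)$ --- which is precisely where the diamond operators can hide. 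This is not a vacuous worry: the Atkin--Lehner conjugate $\mathcal{G}_m^{*,0}$ is equally of multiplicative type, yet by Proposition \ref{isoprop1} full inertia acts on $\Ta_p(\mathcal{G}_m^{*,0})$ by $\chi_\cyc\cdot\iota(\chi_\cyc)$, not by $\chi_\cyc$. So ``multiplicative type $\Rightarrow$ pure $\chi_\cyc$ for all of $I_{\Q_p}$'' is false as a general principle, and the absence of a diamond twist on $\Ta_p(\mathcal{G}_m^0)$ is real content. Your pairing computation for the $Q_m$-action is formally correct, but it is fed by this unproved input, so it cannot close the argument; the paper runs the same pairing argument in the \emph{opposite} direction: it first establishes the $\langle a\rangle$-action on $Q_m$ (it suffices to do so after $\otimes\,\Q_p$, since $Q_m$ is free), by decomposing $A_m\sim F_m\times B_m$ via \eqref{isogeny}, importing Ohta's Proposition (1.2.8) for the $B_m$-factor through Proposition \ref{isogenyprop}, and giving a direct N\'eron-model argument for $F_m$ (where the quotient is unramified and $\langle a\rangle$ acts trivially because $a\equiv 1 \bmod MD$), and only then deduces the $\chi_\cyc$-action on $\Ta_p(\mathcal{G}_m^0)$ from Lemma \ref{lemmapairing}(1). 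A secondary inaccuracy: $\Ta_p(\mathcal{G}_m^0)$ is not literally a step of the monodromy filtration (it sits strictly between the toric and fixed parts whenever a good-reduction factor such as $B_m$ is present), so $G_{\Q_p}$-stability should instead be argued as in the paper, via the universal property of the N\'eron model, which extends the $G_{\Q_p}$-action semilinearly to $\mathcal{A}_m$ and hence preserves the fixed part and its connected component.
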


\begin{proof}
The proof follows closely \cite[Proposition (1.2.8)]{OhtaMA}. 

(1) By definition, $\mathcal{G}^0_m\otimes_{\Z_p[\zeta_{p^m}]}\Q_p(\zeta_{p^m})$ is a $p$-divisible subgroup of $e^\ord A_m\otimes_\Q\Q_p(\zeta_{p^m})[p^\infty]$, so the quotient is a $p$-divisible group whose Tate module is identified with $Q_m$, which is therefore $\Z_p$-free. 

(2) Take $\sigma\in I_{\Q_p}$, $\sigma\neq1$, which acts as the identity on $\Q(\zeta_{p^m})$. By Proposition 
\ref{multiplicativetype}, $\mathcal{G}_m^0$ is of multiplicative type, so $\sigma$ acts on $\Ta_p(\mathcal{G}_m^0)$ by $\chi_\cyc(\sigma)$, and therefore from (1) in Lemma \ref{lemmapairing} we conclude that $[x,y]_m=0$ for all $x,y\in \Ta_p(\mathcal{G}_m^0)$. Since $[\cdot,\cdot]_m$ is a perfect pairing on $e^\ord\Ta_p(J_m)$ by (2) in Lemma \ref{lemmapairing}, we see that $[\cdot,\cdot]_m$ induces a surjective homomorphism 
\begin{equation}\label{pairing2}
Q_m\longepi \Hom(\Ta_p(\mathcal{G}_m^0),\Z_p(1)).\end{equation}
The source and the target of \eqref{pairing2} are finitely generated free $\Z_p$-modules, which have the same $\Z_p$-rank by Proposition \ref{multiplicativetype} and the definition of $Q_m$ in \eqref{defQm}; so \eqref{pairing2} is an isomorphism of $\Z_p$-modules. 

(3) We first prove that $\Ta_p(\mathcal{G}_m^0)$ is also stable under the action of $\sigma$. Since $A_m$ is defined over $\Q$, the action of $G_{\Q_p}$ on its N\'eron model $\mathcal{A}_m$ over  $\Z_q(\zeta_{p^m})$ is via its action on $\Spec(\Z_q(\zeta_{p^m}))$; the universal property of N\'eron models shows that for each $\sigma\in G_{\Q_p}$ we have a map $i_\sigma$ making the following diagram commutative:
\[\xymatrix{\mathcal{A}_m\ar[r]^-{i_\sigma}\ar[d]& \mathcal{A}_m\ar[d]\\
\Spec(\Z_q(\zeta_{p^m}))\ar[r]^-{\Spec(\sigma)}&\Spec(\Z_q(\zeta_{p^m})).
}\] One then deduces that $i_\sigma$ induces an automorphism of $(\mathcal{A}[p^n]^\mathrm{f})^0$ and therefore $\Ta_p(\mathcal{G}_m^0)$ is also stable under the action of $\sigma$. 
To complete the proof, in light of (1) in Lemma \ref{lemmapairing}, it is enough to show the stated action for $Q_m$ only. We show it for $Q_m\otimes_{\Z_p}\Q_p$, which implies the assertion for $Q_m$. 
Recall the isogeny $A_m\rightarrow F_m\times B_m$ in \eqref{isogeny}; this isogeny commutes with the action of $\langle a\rangle$, so it is enough to show the relation for $F_m$ and $B_m$. The proof 
of \cite[Proposition (1.2.8)]{OhtaMA} shows that the relation hold for $B_m^{\Oh}$, and therefore for its quotient $B_m^{\Oh,\new}$, hence also for $B_m$ by Proposition \ref{isogenyprop}.  
Since $F_m$ has multiplicative reduction over $\Q_p$, we have that its N\'eron model over $\Z_p[\zeta_{p^m}]$ is the scalar extension of its N\'eron model $\mathcal{F}_m$ over $\Z_p$ (\cite[\S7.4, Corollary 4]{BLR-Neron}; by \cite[IX Propotition 5.6]{Groth}, 
$(F_m\otimes_\Q\Q_p)[p^\infty]/(\mathcal{F}_m^0[p^\infty])^\mathrm{f}\otimes_{\Z_p}\Q_p$
extends to an \'etale $p$-divisible group over $\Q_p$. It follows that $I_{\Q_p}$ acts trivially 
on $\Ta_p(F_m)/\Ta_p((\mathcal{F}_m^0)^\mathrm{f})$. On the other hand, for $\sigma\in I_{\Q_p}$ we have $a\equiv 1\pmod{MD}$, and, comparing with the definition of $\mathcal{S}$ (see the paragraph before \eqref{isogeny}), we see that $\langle a\rangle$ acts trivially also on $F_m$, and the result follows. 
\end{proof}

\subsection{Twisting by the Atkin--Lehner involution}
Define ${A}_m^*=J_m/\tau_m(\ker(\alpha_m))$. 
Note that the Hecke action on $A_m^*$ is via the Hecke operators $T_\ell^*$,  $U_\ell^*$ and $\langle a\rangle^*$, 
because $T\circ\tau_m=\tau_m\circ T^*$ for $T$ any such operator. 
As before let $\mathcal{A}_m^*$ be a N\'eron model of $A_m^*$ over $\Z_p[\zeta_{p^m}]$, $\mathcal{A}_m^{*,0}$ be
its connected component containing the identity, and set 
\[\mathcal{A}^{*,0}_m[p^\infty]^\mathrm{f}=\dirlim_n(\mathcal{A}_m^{*,0}[p^n])^\mathrm{f}.\]
{Denoting by $e^{\ord,*}$ the Hida ordinary projector associated with $U_p^*$,} define the $p$-divisible group 
\[\mathcal{G}_m^*\defeq e^{\ord,*}\mathcal{A}^{*,0}_m[p^\infty]^\mathrm{f}.\]
and, as before, define $Q_m^*$ by the exactness of the sequence
\begin{equation}\label{defQm*} 
0\longrightarrow \Ta_p(\mathcal{G}_m^{*,0})\longrightarrow e^{\ord,*}\Ta_p(J_m)
\longrightarrow Q_m^*\longrightarrow 0\end{equation}
where $\mathcal{G}_m^{*,0}$ is the connected component of the identity of $\mathcal{G}_m^*$. 
Note that $\tau_m$ induces an {automorphism} of $\mathcal{G}_m^*$ over $\Z[\zeta_{MDp^m}]$,
which is an \'etale extension of $\Z[\zeta_{p^m}]$; as a consequence of Proposition \ref{multiplicativetype}, we see that $\mathcal{G}_m^{*,0}$ is also ordinary. 

\begin{proposition}\label{isoprop1}
    For each integer $m\geq 0$ we have canonical isomorphisms
    \[e^{\ord,*}H^1_{\et}(\overline{X}_m,\Z_p)^{I_{\Q_p}}\simeq \Hom(Q_m^*,\Z_p),\]
    \[e^{\ord,*}H^1_{\et}(\overline{X}_m,\Z_p)/e^{\ord,* }H^1_{\et}(\overline{X}_m,\Z_p)^{I_{\Q_p}}\simeq \Hom(\Ta_p(\mathcal{G}_m^{*,0}),\Z_p),\]
    of free $\Z_p$-modules of finite rank, where we denote $\overline{X}_m\defeq X_m\otimes_{\Q}\overline{\Q}$. The action of $\sigma\in I_{{\Q_p}}$ on  
   $\Hom(\Ta_p(\mathcal{G}_m^{*,0}),\Z_p)$ is given by $\chi_\cyc^{-1}(\sigma)\langle \chi_\cyc(\sigma)^{-1}\rangle^*$.  
\end{proposition}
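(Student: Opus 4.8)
The plan is to reduce everything to the exact sequence \eqref{defQm*} by passing to $\Z_p$-linear duals, exactly as in the proof of \cite[Proposition (1.2.8)]{OhtaMA}, the starred objects $A_m^\ast$, $\mathcal{G}_m^{\ast,0}$, $Q_m^\ast$ playing the role that $A_m$, $\mathcal{G}_m^0$, $Q_m$ play there, and using the Jacquet--Langlands correspondence only to import the $\GL_2$ inputs already transported in Propositions \ref{multiplicativetype} and \ref{prop1.2.8}. First I would record the comparison isomorphism $H^1_{\et}(\overline{X}_m,\Z_p)\cong\Hom_{\Z_p}(\Ta_p(J_m),\Z_p)$ coming from $J_m=\Pic^0(X_m)$; this is equivariant for both $G_{\Q_p}$ and the Hecke action, the Hecke operators acting on the right as the $\Z_p$-transposes of their action on $\Ta_p(J_m)$. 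With the normalization of the Hecke action on cohomology used in \cite{OhtaMA}, the projector $e^{\ord,\ast}$ on $H^1_{\et}$ matches $e^{\ord,\ast}$ on $\Ta_p(J_m)$, so that $e^{\ord,\ast}H^1_{\et}(\overline{X}_m,\Z_p)\cong\Hom_{\Z_p}\bigl(e^{\ord,\ast}\Ta_p(J_m),\Z_p\bigr)$ as $G_{\Q_p}$-modules.

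Next I would dualize \eqref{defQm*}. The starred analogue of part (1) of Proposition \ref{prop1.2.8} (proved by the very same N\'eron-model argument applied to $A_m^\ast$ in place of $A_m$) gives that $Q_m^\ast$ is $\Z_p$-free; hence $\Hom_{\Z_p}(-,\Z_p)$ applied to \eqref{defQm*} stays exact and yields a short exact sequence of free $\Z_p$-modules of finite rank
\[0\longrightarrow\Hom(Q_m^\ast,\Z_p)\longrightarrow e^{\ord,\ast}H^1_{\et}(\overline{X}_m,\Z_p)\longrightarrow\Hom(\Ta_p(\mathcal{G}_m^{\ast,0}),\Z_p)\longrightarrow0.\]
It then remains to identify the left-hand term with the $I_{\Q_p}$-invariants and to compute the Galois action on the quotient.

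For this I would establish the starred analogues of Proposition \ref{multiplicativetype} and of part (3) of Proposition \ref{prop1.2.8}. Running the proof of Proposition \ref{multiplicativetype} verbatim for $A_m^\ast$ shows that $\mathcal{G}_m^{\ast,0}$ is again of multiplicative type, so $I_{\Q_p}$ acts on $\Ta_p(\mathcal{G}_m^{\ast,0})$ through $\chi_\cyc$ up to a diamond operator. The essential new input is the Atkin--Lehner descent datum of Lemma \ref{lemmmaAL}: since $A_m^\ast=J_m/\omega_m(\ker\alpha_m)$ and $\omega_m$ is only defined over $\Q(\zeta_{MDp^m})$ with $\tau_m^\sigma=\sigma\circ\langle a\rangle$, transporting the $I_{\Q_p}$-action on $\Ta_p(\mathcal{G}_m^0)$ of Proposition \ref{prop1.2.8}(3) through $\omega_m$ twists it by $\langle\chi_\cyc(\sigma)\rangle^\ast$. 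This gives that $\sigma\in I_{\Q_p}$ acts on $\Ta_p(\mathcal{G}_m^{\ast,0})$ by $\chi_\cyc(\sigma)\langle\chi_\cyc(\sigma)\rangle^\ast$ and, since the Hecke action on $A_m^\ast$ is through the starred operators, trivially on $Q_m^\ast$ (the diamond contributions cancelling). Taking $\Z_p$-duals, $I_{\Q_p}$ acts trivially on $\Hom(Q_m^\ast,\Z_p)$ and by $\chi_\cyc^{-1}(\sigma)\langle\chi_\cyc(\sigma)^{-1}\rangle^\ast$ on $\Hom(\Ta_p(\mathcal{G}_m^{\ast,0}),\Z_p)$; as the latter has no nonzero invariants ($\chi_\cyc$ being ramified at $p$), left-exactness of $(-)^{I_{\Q_p}}$ applied to the displayed sequence gives $\bigl(e^{\ord,\ast}H^1_{\et}(\overline{X}_m,\Z_p)\bigr)^{I_{\Q_p}}=\Hom(Q_m^\ast,\Z_p)$, the stated identification of the quotient, and the displayed Galois action.

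The main obstacle is the bookkeeping of the Atkin--Lehner twist in the last paragraph: one must verify that conjugation by $\omega_m$ carries the multiplicative-type fixed-part filtration of $A_m$ to that of $A_m^\ast$, and that the Galois cocycle $\sigma\mapsto\langle a\rangle$ of Lemma \ref{lemmmaAL} contributes exactly the factor $\langle\chi_\cyc(\sigma)\rangle^\ast$, so that the diamond operators cancel on $Q_m^\ast$ but survive on $\Ta_p(\mathcal{G}_m^{\ast,0})$. Everything else is a formal consequence of $\Z_p$-duality together with results already established for $A_m$, imported to $A_m^\ast$ either directly or through the Jacquet--Langlands comparison used throughout this section.
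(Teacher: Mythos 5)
Your proposal is correct and follows essentially the same route as the paper's proof: dualize the exact sequence \eqref{defQm*} (using $\Z_p$-freeness of its terms), determine the inertia action on $Q_m^*$ and $\Ta_p(\mathcal{G}_m^{*,0})$ by transporting Proposition \ref{prop1.2.8} through the Atkin--Lehner twist of Lemma \ref{lemmmaAL} (the paper writes your $\langle\chi_\cyc(\sigma)\rangle^*$ as $\iota(\chi_\cyc(\sigma))$), and then take $I_{\Q_p}$-invariants, using that the quotient term has no nonzero invariants. The only cosmetic difference is that the paper also invokes Proposition \ref{isopdiv1} to identify the middle term with $\Hom(e^{\ord,*}\Ta_p(A_m^*),\Z_p)$, whereas you work directly with $\Hom(e^{\ord,*}\Ta_p(J_m),\Z_p)$, which suffices since that is the middle term of \eqref{defQm*}.
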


\begin{proof}
    The proof is similar to \cite[Proposition--Definition (1.3.6)]{OhtaMA}.
    By Proposition \ref{isopdiv1}, we have a canonical isomorphisms
    \[H^1_{\et}(\overline{X}_m,\Z_p)\simeq \Hom(e^{\ord,* }\Ta_p(J_m),\Z_p)\simeq \Hom(e^{\ord,* }\Ta_p(A_m^*),\Z_p).\] By Lemma \ref{lemmmaAL} and Proposition \ref{prop1.2.8}, we see that $\sigma\in I_{\Q_p}$ acts trivially on $Q_m^*$ and as $\chi_\cyc(\sigma)\iota(\chi_\cyc(\sigma))$ on $\Ta_p(\mathcal{G}_m^{*,0})$, where $\iota\colon \mathcal{Z}_{MD}\hookrightarrow \widetilde{\Lambda}_{\Z_p}$ is the inclusion of group-like elements of 
    \begin{equation}\label{ZMD}
    \mathcal{Z}_{MD}=\invlim_m(\Z/MDp^m\Z)^\times\simeq (\Z/MD\Z)^\times\times\Z_p^\times.\end{equation} 
    The statement on the action of $\sigma\in I_{\Q_p}$ on $\Hom(\Ta_p(\mathcal{G}_m^{*0}),\Z_p)$ follows.
    It also follows that $\Ta_p(\mathcal{G}_m^{*,0})^{I_{\Q_p}}=0$ and $(Q_m^*)^{I_{\Q_p}}=Q_m^*$. 
    Since all modules are $\Z_p$-free, taking $\Z_p$-duals in \eqref{defQm*} gives an exact sequence 
    \begin{equation}\label{Homexseq}
    0\longrightarrow \Hom(Q_m^*,\Z_p)\longrightarrow \Hom(e^{\ord,*}\Ta_p(J_m),\Z_p)\longrightarrow 
    \Hom(\Ta_p(\mathcal{G}_m^{*,0}),\Z_p)\longrightarrow 0\end{equation}
Taking $I_{\Q_p}$-invariants in \eqref{Homexseq} gives the first isomorphism; replacing the first isomorphism in \eqref{Homexseq} gives also the second isomorphism.
\end{proof}

 \section{Eichler--Shimura isomorphism}\label{sec.ESisom}
In this section we introduce Eichler--Shimura cohomology groups and study their Hodge--Tate exact sequences
using the results of the previous section. We then combine with the results of Section \ref{secmodforms} 
to produce a power series out of a generator of the ramified quotient of the inverse limit of the 
Eichler--Shimura cohomology groups. 
 
\subsection{Projective limits of cohomology groups}\label{secProjlim}
Define the \emph{$p$-adic Eichler--Shimura cohomology group} to be the $G_\Q=\Gal(\overline{\Q}/\Q)$-representation  
\[\ES_{\Z_p}=\invlim_m H^1_\text{\'et}(\overline{X}_m,\Z_p)\]
where as before we denote $\overline{X}_m=X_m\otimes_\Q\overline{\Q}$ and the inverse limit is taken with respect to the canonical projection maps $X_{m+1}\rightarrow X_{m}$ for $m\geq 1$.

Following \cite[\S1]{Ohta-ES} the inverse limit of \'etale cohomology groups admit a description in terms of group cohomology. First, we have a canonical isomorphism (with $\Z_p$ equipped with the trivial $\Gamma_m$-action)
\[H^1_{\et}(\overline{X}_m,\Z_p)\simeq H^1(\Gamma_m,\Z_p).\]
Then, let $\mathfrak{G}_m$ be the $p$-adic completion of $\Gamma_m$ in $\M_2(\Z_p)$ (here, we understand the fixed isomorphism $B_p\simeq\M_2(\Q_p)$). We have 
$\mathfrak{G}_1/\mathfrak{G}_m\simeq\Gamma_1/\Gamma_m$ and \[\invlim_{m'\geq m}\Gamma_m/\Gamma_{m'}\simeq Z_m\defeq\mathfrak{G}_m/\mathfrak{U},\] where $\mathfrak{U}$ is the subgroup of $\M_2(\Z_p)$ consisting of matrices of the form $\smallmat 1*01$. Note that the map $\smallmat abcd\mapsto \binom ac$ defines an isomorphism of $\Z_p$-modules of $Z_m$ with the subset of $\Z_p^2$ consisting 
of elements $\binom ac$ such that $\binom ac\equiv \binom 10$ modulo $p^m$, for all $m\geq 1$. Shapiro's lemma induces an isomorphism 
\begin{equation}\label{iso-et-gr}
\invlim _{m'\geq m}H^1_{\et}(\overline{X}_{m'},\Z_p)\simeq H^1(\Gamma_m,\Z_p[[Z_m]])\end{equation}
for all $m\geq 1$. In particular, $\ES_{\Z_p}\simeq H^1(\Gamma_1,\Lambda)$, where $\Lambda=\Z_p[[1+p\Z_p]]$.  

\begin{remark}\label{rem4.1}
This result follows by combining \cite[Theorem (2.4.4)]{OhtaJ} and \cite[Proposition (4.3.1)]{OhtaJ}, and the proof makes use of the completion $\mathfrak{F}_m$ of $\Gamma_m$ with respect to all subgroups of $\Gamma_m\cap\Gamma(p)$, where $\Gamma(p)$ is the subgroup of norm $1$ elements in $\mathcal{O}_B$ whose image in $\M_2(\Z_p)$ is congruent to the identity modulo $p$. 
\end{remark}

Let $J_m$ be the Jacobian variety of $X_m$ and $\Ta_m\defeq\Ta_p(J_m)$ be its $p$-adic Tate module. We have an isomorphism of $G_\Q$-modules  
\begin{equation}\label{eqES1}H^1_\text{\'et}(\overline{X}_m,\Z_p)(1)\simeq \Ta_m\end{equation}
(where for a $G_\Q$-module $M$ we denote $M(1)$ its Tate twist). Define the inverse limit 
\[\mathbf{T}=\invlim_{m\geq 1} \Ta_m\] taken with respect to the canonical projection maps $J_{m+1}\rightarrow J_m$ for $m\geq 1$; we then have a canonical isomorphism of $\Z_p[G_\Q]$-modules 
\[\ES_{\Z_p}(1)\simeq \mathbf{T}.\] 

The $\Z_p$-modules introduced before are equipped with actions of the Hecke and diamond operators from \S\ref{secHeckeop}. Let $T_\ell$ for $\ell\nmid MDp$ and $U_\ell$ for $\ell\mid Mp$ be the standard Hecke operators
acting as correspondences on $X_m$ (\cite[\S3.1, \S7.2]{shimura}); we also denote $\langle d\rangle$ the diamond operator for integers $d$ with $d\in (\Z/MDp^m\Z)^\times$; these operators induce a (covariant) Hecke action on singular 
cohomology groups $H^i(X_m,\Z)$ (\cite[\S8.2]{shimura}) which induces (using the singular-\'etale isomorphism) an action on 
$H^i_\text{\'et}(X_m,\Z_p)$ of Hecke operators, denoted $T_\ell^*$ and $U_\ell^*$, and diamond operators, denoted $\langle d\rangle^*$ (see more details in the modular curve case in 
\cite[\S7.3]{OhtaJ}; in the quaternionic case see also \cite[\S2.1]{LV-IJNT}). 
The Hecke and diamond operators acting as correspondences on $X_m$ induce by contravariant functoriality an action 
on $J_m$ and $\Ta_m$, denoted $T_\ell$, $U_\ell$ and $\langle d\rangle$; under the isomorphism \eqref{eqES1} 
the action of the Hecke and diamond operators $T^*$ on $H^1_\text{\'et}(X_m,\Z_p)(1)$ corresponds to the action of $T$ on $J_m$ (see \cite[\S8]{hida-galois}).

\subsection{Primitive branches of Hida families}\label{PBHF}
The Hecke and diamond operators acting as correspondences on $X_m$ also induce by contravariant functoriality an action 
on $J_m$ and $\Ta_m$, denoted $T_\ell$, $U_\ell$ and $\langle d\rangle$ as before. 
 One can define $\mathfrak{h}_m$ to be the $\mathcal{O}$-subalgebra of 
$\End_\mathcal{O}(\Ta_m\otimes_{\Z_p}\mathcal{O})$ generated by these operators (\cite[Section 6]{LV-MM}).
Put $\mathfrak{h}_\infty=\invlim\mathfrak{h}_m$, which acts on \[\mathbf{T}_\mathcal{O}=\invlim_m (\Ta_m\otimes_{\Z_p}\mathcal{O}).\] Recalling the ordinary idempotent $e^\ord$ associated with $U_p$, define $\mathfrak{h}_\infty^\ord=e^\ord\mathfrak{h}_\infty$ 
and \[\mathbf{T}^\ord_\mathcal{O}=e^\ord\mathbf{T}_\mathcal{O}.\]

We fix from now on a \emph{primitive} component $\I$ of the localization $\mathfrak{h}_\mathfrak{m}^\ord$  of 
$\mathfrak{h}_\infty^\ord$ at a maximal ideal $\mathfrak{m}$: this means that    
$\mathbb{I}$ is a complete local noetherian domain which is finitely generated as $\Lambda_\mathcal{O}$-module, 
and is obtained as the integral closure of the quotient ring $\mathcal{R}=\mathfrak{h}_\mathfrak{m}^\ord/\mathfrak{a}$ 
for a unique minimal prime ideal $\mathfrak{a}\subseteq\mathfrak{m}$. In particular, $\mathbb{I}$ is a finite flat extension of $\mathcal{O}[[1+p\Z_p]]$ where $\mathcal{O}$ is the valuation ring of a finite unramified extension of $\Q_p$.
See \cite[\S1.5]{NP} and \cite[Sections 5-6]{LV-MM} for details, where $\I$ is also called a \emph{branch} 
of a Hida family, terminology  that we adopt in this paper too. 
We denote $e_\mathcal{R}$ the projector associated with the local component $\mathcal{R}$. 

A point $\kappa\in\Spec(\I)$ is said to be \emph{arithmetic} if its restriction 
to $\Lambda$ is of the form $\kappa(u)=\varepsilon(u)u^{k-2}$, where $u$ is a topological generator of $U^{(1)}=1+p\Z_p$, 
$k\geq 2$ is an even integer and $\varepsilon$ is a character factoring through $U^{(m)}$ for some $m\geq 1$.  
We call $k$ the \emph{weight} of $\kappa$ and $\Gamma_m$ its \emph{level}
if $\varepsilon$ factors though $U^{(m)}$ and not through $U^{(m')}$ for any $m'\leq m$ (with this terminology, the trivial character has level $\Gamma_1$). We call $(k,\varepsilon)$ the \emph{signature} of $\kappa$.
Each arithmetic point $\kappa\in\Spec(\I)$ of weight $k$ corresponds to a normalized eigenform $f_{\GL_2,\kappa}\in S_k(\Gamma_0(Np^s),\psi)$ for a suitable integer $s\geq 1$ and a Dirichlet character $\psi$ modulo $Np^s$ 
(see \cite[Theorem III]{Hida-Annals}, where arithmetic points are called \emph{algebraic}, or \cite[Definition 2.1.1]{Howard-Inv} for details). In this case, we say that $\I$ \emph{passes through} $f_{\GL_2,\kappa}$, or that $f_{\GL_2,\kappa}$ is the \emph{specialization} at $\kappa$ of $\I$. 

We assume that $\mathbb{I}$ satisfies the following conditions: 
\begin{itemize}
    \item $\I$ passes through a $p$-stabilized newform $f_{\GL_2}\in S_{k_0}(\Gamma_0(Np))$ of trivial character and even weight $k_0\equiv 2\mod{2(p-1)}$;
    \item The residual Galois $p$-adic representation $\bar{\rho}_{f_{\GL_2}}\colon\Gal(\overline{\Q}/\Q)\rightarrow\GL_2(\mathbb{F})$ attached to $f_{\GL_2}$ (where $\mathbb{F}$ is a finite field of characteristic $p$) is irreducible and $p$-distinguished (the last condition means that the semisemplification of its restriction to a decomposition group at $p$ is the direct sum of two distinct characters).
    \item $\mathfrak{h}_\mathfrak{m}^\ord$ is Gorenstein (\emph{cf.} \cite[Assumption 6.2]{LV-MM}).
\end{itemize}

\subsection{Control Theorem}\label{sec.controltheorem} Let $F\subseteq\C_p$ be a complete subfield and let $\mathcal{O}$ be its valuation ring. We may then consider the cohomology groups 
$H^1(\Gamma_m,\mathcal{O})$ and form their projective limit  
\[\ES_\mathcal{O}=\invlim_mH^1(\Gamma_m,\mathcal{O})\] as before. 
The $\mathcal{O}$-modules $H^1(\Gamma_m,\mathcal{O})$ are equipped with an action of 
Hecke and diamond operators $T^*_\ell$ for $\ell\nmid MDp$ and $U^*_\ell$ for $\ell\mid Mp$ and $\langle d\rangle^*$ for $d\in (\Z/MDp^m\Z)^\times$, 
and we denote $\mathfrak{h}_m^*$ the $\mathcal{O}$-subalgebra of 
$\End_\mathcal{O}(H^1(\Gamma_m,\mathcal{O}))$ generated by these operators
(see \cite[\S2.1]{LV-IJNT}). 
These actions are compatible with respect to the maps induced by the canonical projection maps $X_{m+1}\rightarrow X_m$ for $m\geq 1$, and we may define 
the inverse limit ${\mathfrak{h}}_\infty^*=\invlim{\mathfrak{h}}_m^*$ which equips $\ES_\mathcal{O}$ with a canonical structure 
of $\mathfrak{h}^*_\infty$-module. If $e^{\ord,*}$ is the ordinary idempotent associated with $U_p^*$ as before, then we set $\mathfrak{h}_\infty^{\ord,*}=e^{\ord,*}\mathfrak{h}_\infty^*$ and 
define the $\mathfrak{h}_\infty^{\ord,*}$-module 
\[\ES^\ord_\mathcal{O}=e^{\ord,*}\ES_{\mathcal{O}}^\ord.\] 
Note that the canonical structure of $\Lambda_\mathcal{O}$-module of $\ES^\ord_\mathcal{O}$ is via diamond operators $\langle d\rangle^*$. Moreover, $\ES_\mathcal{O}^\ord\simeq\ES^\ord_{\Z_p}\hat\otimes_{\Z_p}\mathcal{O}$ by the arguments in \cite[Lemma (1.2.11)]{Ohta-ES} and \cite[Lemma (1.2.12)]{Ohta-ES} (these results use the same group $\mathfrak{F}_m$ used to justify \eqref{iso-et-gr}, \emph{cf.} Remark \ref{rem4.1}). 
Under the isomorphism \eqref{eqES1} 
the action of the Hecke and diamond operators $T^*$ on $H^1_\text{\'et}(X_m,\Z_p)(1)$ corresponds to the action of $T$ on $J_m$ (see \cite[\S8]{hida-galois}), and  
the map $T\mapsto T^*$ induces an isomorphism of $\mathcal{O}$-algebras $\mathfrak{h}_m\simeq \mathfrak{h}_m^*$
and $\mathfrak{h}_\infty^\ord\simeq \mathfrak{h}_\infty^{\ord,*}$.

Fix $\I$ as in \S\ref{PBHF}. Then using the isomorphism $\mathfrak{h}_\infty^\ord\simeq \mathfrak{h}_\infty^{\ord,*}$, we have a corresponding maximal ideal $\mathfrak{m}^*$ and a primitive Hida family $\I^*$ for $\mathfrak{h}_\infty^{\ord,*}$; as before, this is the integral closure of $\mathcal{R}^*=\mathfrak{h}_\mathfrak{m}^{\ord,*}/\mathfrak{a}^*$ for a unique minimal 
prime ideal $\mathfrak{a}^*\subseteq\mathfrak{m}^*$, with associated projector $e_{\mathcal{R}^*}$. 
Define \[\ES_\I=\ES_\mathcal{O}^\ord\otimes_{\mathfrak{h}_\infty^{\ord,*}}\I^*.\] 
Note that, using the projector $e_\mathcal{R}$, we have 
$\ES_\I=(e_\mathcal{R}\ES_\mathcal{O}^\ord)\otimes_{\mathcal{R}^*}\I^*$.  
Let $\widetilde{\Lambda}_\mathcal{O}\defeq\mathcal{O}[[\mathcal{Z}_{MD}]]$, 
where recall that $\mathcal{Z}_{MD}$  is defined in \eqref{ZMD}.

\begin{theorem}\label{thm1.2}
$\ES_\mathcal{O}^\ord$ is a $\widetilde{\Lambda}_\mathcal{O}$-module of finite rank. 
Moreover, $\ES_\I$ is a free $\I^*$-module of rank $2$.
\end{theorem}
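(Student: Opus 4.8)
The plan is to follow Ohta's strategy for $\GL_2$ (\cite[\S1.2--1.4]{OhtaMA}, \cite[\S1.2]{Ohta-ES}), feeding in the results of the previous section and transferring the $\GL_2$ inputs through the Jacquet--Langlands correspondence. First I would prove that $\ES_\mathcal{O}^\ord$ is finitely generated over $\widetilde\Lambda_\mathcal{O}$. Via the base change $\ES_\mathcal{O}^\ord\simeq\ES^\ord_{\Z_p}\hat\otimes_{\Z_p}\mathcal{O}$ recorded above, the isomorphism $\ES_{\Z_p}\simeq H^1(\Gamma_1,\Lambda)$, and the fact that $\widetilde\Lambda_\mathcal{O}$ is module-finite over $\Lambda_\mathcal{O}=\mathcal{O}[[1+p\Z_p]]$, it suffices to show that $\ES^\ord_{\Z_p}$ is finitely generated over $\Lambda=\Z_p[[1+p\Z_p]]$. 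As $\ES^\ord_{\Z_p}$ is a compact $\Lambda$-module, the topological Nakayama lemma reduces this to the finiteness over $\Z_p$ of the quotient by the augmentation ideal $(T)\subseteq\Lambda$; this quotient is controlled by the weight-two ordinary cohomology $e^{\ord,*}H^1(\Gamma_1,\Z_p)$ of the single proper curve $X_1$, which is finitely generated over $\Z_p$.

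For the freeness I would exploit the Hodge--Tate filtration of Proposition \ref{isoprop1}. Passing to the inverse limit over $m$ of the short exact sequences
\[0\longrightarrow\Hom(Q_m^*,\Z_p)\longrightarrow e^{\ord,*}H^1_\et(\overline{X}_m,\Z_p)\longrightarrow\Hom(\Ta_p(\mathcal{G}_m^{*,0}),\Z_p)\longrightarrow 0\]
(exact in the limit because all three terms are finite free $\Z_p$-modules, so the relevant $\invlim^1$ vanishes) yields a short exact sequence of $\widetilde\Lambda_\mathcal{O}$-modules
\[0\longrightarrow\ES^{\ord,+}\longrightarrow\ES_\mathcal{O}^\ord\longrightarrow\ES^{\ord,-}\longrightarrow 0\]
with $\ES^{\ord,+}=\invlim_m\Hom(Q_m^*,\Z_p)\hat\otimes\mathcal{O}$ and $\ES^{\ord,-}=\invlim_m\Hom(\Ta_p(\mathcal{G}_m^{*,0}),\Z_p)\hat\otimes\mathcal{O}$; these realize the two graded pieces of the ordinary filtration, with $I_{\Q_p}$ acting trivially on the former and through $\chi_\cyc^{-1}\langle\chi_\cyc^{-1}\rangle^*$ on the latter by Propositions \ref{prop1.2.8} and \ref{isoprop1}.

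Now tensor with $\I^*$ over $\mathfrak{h}_\infty^{\ord,*}$. The module $\ES_\I$ is finitely generated over $\I^*$ by the first paragraph, and its generic rank is $2$: specializing at an arithmetic point of weight $k_0$ recovers the $2$-dimensional Galois representation attached to $f_{\GL_2}$, and arithmetic points are dense. On the other hand, reducing the above filtration modulo the maximal ideal of $\I^*$ (where $\otimes\mathbb{F}$ is right exact) bounds $\dim_{\mathbb{F}}(\ES_\I\otimes_{\I^*}\mathbb{F})$ by the sum of the residual dimensions of the two pieces, each of which is $1$ by the $p$-distinguished hypothesis, which separates the two residual characters of the local-at-$p$ filtration. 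Hence the minimal number of generators of $\ES_\I$ is at most $2$. Since $\I^*$ is a Noetherian local domain, this minimal number is also at least the generic rank, so it equals $2$; a minimal presentation $(\I^*)^2\twoheadrightarrow\ES_\I$ then has kernel that is both torsion, as it dies over $\mathrm{Frac}(\I^*)$, and torsion-free, as a submodule of a free module, hence zero. Therefore $\ES_\I\cong(\I^*)^2$.

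The step I expect to be the main obstacle is showing that each graded piece has residual dimension $1$. This rests on the $p$-distinguished and irreducibility hypotheses on $\bar\rho_{f_{\GL_2}}$, on the $I_{\Q_p}$-action computed in Proposition \ref{isoprop1}, and on the perfectness of the twisted Tate pairing (Proposition \ref{prop1.2.8}) together with the multiplicativity of $\mathcal{G}_m^0$ (Proposition \ref{multiplicativetype}). In Ohta's $\GL_2$ setting these are precisely the ingredients that pin down the two lines; the extra work here is to import them through the Jacquet--Langlands correspondence and to verify that passage to the $D$-new quotient preserves the ranks and introduces no spurious torsion. The finite-generation and generic-rank steps are comparatively routine once the first paragraph and the limit filtration are available.
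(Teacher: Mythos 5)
Your finite-generation argument is a genuinely different, and viable, route: the paper obtains this part by identifying $\ES_\mathcal{O}^\ord\simeq\mathbf{T}_\mathcal{O}^\ord$ with the $D$-new quotient of Hida's big Galois representation $T_\infty^\ord$ for $\GL_2$ via Jacquet--Langlands, whereas you use topological Nakayama plus a weight-two control statement. That control input is not free of charge, but it is available independently of the theorem (it is in substance \cite[Theorem 2.19]{LV-IJNT}, restated as Theorem \ref{CT}, and the isomorphism $\ES^\ord_{\Z_p}/(\omega_m)\simeq e^{\ord,*}H^1_{\et}(\overline{X}_m,\Z_p)$ used in \S\ref{HT}). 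Note that your generic-rank-two computation also silently relies on the same control theorem: without it, the arithmetic specialization $\ES_\I/P_{\kappa^*}\ES_\I$ is not known to recover the two-dimensional representation attached to $f_{\GL_2,\kappa}$, so you should cite Theorem \ref{CT} there as well.

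The freeness argument, however, has a genuine gap at exactly the step you flag: the claim that each graded piece $\ES^{\ord,\pm}\otimes_{\mathfrak{h}_\infty^{\ord,*}}\mathbb{F}$ is one-dimensional. The $p$-distinguished hypothesis separates the two residual characters appearing in the sub and the quotient of the filtration, but it gives no upper bound on the multiplicity with which each character occurs; bounding each multiplicity by one is a mod-$p$ multiplicity-one statement for the cohomology of the quaternionic tower, and this is not a formal consequence of irreducibility plus $p$-distinguishedness. In Ohta's $\GL_2$ proof the cyclicity of the graded pieces does not come from these hypotheses: it comes from the $q$-expansion principle, namely the duality identifying the unramified quotient with the module of $\Lambda$-adic forms, which is faithful and cyclic over the Hecke algebra, the sub piece then being handled via the twisted Tate pairing (the analogue of Proposition \ref{prop1.2.8}). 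Precisely this input has no quaternionic counterpart in the present paper --- Remarks \ref{remohta}, \ref{remwiles} and \ref{remiso} record that Wiles--Ohta's theory of $\Lambda$-adic forms, and the attendant freeness and cyclicity statements, are out of reach here. Nor can the residual statement be ``imported through Jacquet--Langlands,'' as you suggest as a fallback: the correspondence matches automorphic forms and isogeny classes of Galois representations and does not formally preserve integral or mod-$p$ multiplicities (making it do so is the content of theorems of Ribet and Helm, and mod-$p$ multiplicity one for Shimura curves genuinely fails in closely related situations). This is why the paper's proof of the second assertion avoids residual considerations altogether: it transfers the entire $\Lambda$-adic module, using $\ES_\mathcal{O}^\ord\simeq\mathbf{T}_\mathcal{O}^\ord$ and the identification of the latter, as a Hecke module, with the $D$-new quotient of $T_\infty^\ord$, so that $\ES_\I\simeq T_\infty^\ord\otimes_{\tilde{h}_\infty^\ord}I$, and then quotes the known rank-two freeness of that $\GL_2$ object (\cite[Proposition 6.4]{LV-MM}).
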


\begin{proof} 
From \eqref{eqES1} we have an isomorphism of $\widetilde{\Lambda}_\mathcal{O}$-modules 
$\ES_\mathcal{O}^\ord\simeq\mathbf{T}_\mathcal{O}^\ord$ which intertwines the two Hecke actions (and diamond operators) described before.
Now $\mathbf{T}_\mathcal{O}^\ord$ is isomorphic to the $D$-new quotient $\widetilde T_\infty^\ord$ 
of the Hida big Galois representation $T_\infty^\ord$ for $\GL_2$ of tame level $\Gamma_0(MD)$; this is an isomorphism as Hecke modules, where we identify $\tilde{\mathfrak{h}}_\infty^\ord$ 
with the $D$-new quotient of Hida's Hecke algebra $\tilde h_\infty^\ord$ for $\GL_2$ of tame level $\Gamma_0(MD)$. 
Since the Hida big Galois representation $T_\infty^\ord$ is finitely generated over $\Lambda_\mathcal{O}$, the first part follows. For the second part, recall that $\I$ is isomorphic to a primitive component $I$ of Hida's Hecke algebra $\tilde h_\infty^\ord$, and therefore $\mathbf{T}_\infty^\ord\otimes_{\tilde{\mathfrak{h}}_\infty^\ord}\I\simeq T_\infty^\ord\otimes_{\tilde{h}_\infty^\ord}I$; the result then follows from the fact that
$T_\infty^\ord\otimes_{\tilde{h}_\infty^\ord}I$ is free of rank $2$ over $I$. 
The reader is referred to \cite[Proposition 6.4]{LV-MM} for details. 
\end{proof}

Let $\Sym^{k-2}(R)$ be the standard symmetric left representation of $\GL_2(R)$, for a ring $R$, defined for non-negative integers $i$ and $j$ such that $i+j=k-2$ by  \[\gamma (x^iy^j)=(ax+by)^i(cx+dy)^j\] 
for $\smallmat abcd\in\GL_2(R)$, where $\{x^iy^j|i+j=k-2\}$ is a basis of $\Sym^{k-2}(R)$.
We then have a specialization map 
\begin{equation}\label{specmap}
\mathrm{sp}_{k,m}\colon \ES_\mathcal{O}^\ord\longrightarrow H^1(\Gamma_m,\Sym^{k-2}(\mathcal{O}))\end{equation} 
for all $m\geq 1$ obtained by integrating measures (\cite[\S2.5]{LV-IJNT}). 
Let $\kappa\colon \I\rightarrow\overline{\Q}_p$ be an arithmetic point and 
denote $P_\kappa$ the kernel of $\kappa$, $F_\kappa$ the fraction field of $\I/P_\kappa\I$ and $\mathcal{O}_\kappa$ its valuation ring. Each arithmetic point $\kappa\colon \I\rightarrow \mathcal{O}_\kappa$ corresponds to a unique 
$\kappa^*\colon \I^*\rightarrow\mathcal{O}_\kappa$, with kernel $P_{\kappa^*}$, and we 
denote $H^1(\Gamma_m,\Sym^{k-2}(\mathcal{O}_\kappa))[P_{{\kappa}^*}]$ the submodule of the $\mathcal{O}_\kappa$-module $H^1(\Gamma_m,\Sym^{k-2}(\mathcal{O}_\kappa))$ consisting of elements
$x$ such that $P_{{\kappa^*}}x=0$. 

\begin{theorem}[Control Theorem]\label{CT}
    Let $\kappa\in \Spec(\I)$ be an arithmetic point of signature $(k,\varepsilon)$ and level $\Gamma_m$. Then we have an isomorphism of $2$-dimensional $\mathcal{O}_\kappa$-modules 
    \[\ES_\I/P_{\kappa^*}\ES_\I\simeq H^1(\Gamma_m,\Sym^{k-2}(\mathcal{O}_\kappa))[P_{{\kappa^*}}].\]
\end{theorem}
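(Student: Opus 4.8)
The plan is to reduce the statement to the corresponding control theorem in the $\GL_2$ setting via the Jacquet--Langlands correspondence, exploiting the Hecke-module identification established in the proof of Theorem \ref{thm1.2}. First I would construct the comparison map. The specialization map $\mathrm{sp}_{k,m}$ of \eqref{specmap} is equivariant for the Hecke and diamond operators (it is obtained by integrating measures, \emph{cf.} \cite[\S2.5]{LV-IJNT}), and it factors through the specialization of $\Lambda_\mathcal{O}$ at the restriction $\kappa|_\Lambda$, namely $\kappa(u)=\varepsilon(u)u^{k-2}$. Tensoring over $\mathfrak{h}_\infty^{\ord,*}$ with $\I^*$ and reducing modulo $P_{\kappa^*}$, it therefore induces an $\mathcal{O}_\kappa$-linear map
\[\Phi_\kappa\colon\ES_\I/P_{\kappa^*}\ES_\I=\ES_\mathcal{O}^\ord\otimes_{\mathfrak{h}_\infty^{\ord,*}}\mathcal{O}_\kappa\longrightarrow H^1(\Gamma_m,\Sym^{k-2}(\mathcal{O}_\kappa)).\]
Because $P_{\kappa^*}$ annihilates the source and $\mathrm{sp}_{k,m}$ intertwines the two Hecke actions, the image of $\Phi_\kappa$ is contained in the eigenspace $H^1(\Gamma_m,\Sym^{k-2}(\mathcal{O}_\kappa))[P_{\kappa^*}]$, which is the proposed target.

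Next I would count ranks on both sides. By Theorem \ref{thm1.2}, $\ES_\I$ is free of rank $2$ over $\I^*$; since $\kappa$ is arithmetic, $\I^*/P_{\kappa^*}\I^*\cong\mathcal{O}_\kappa$, so the source $\ES_\I/P_{\kappa^*}\ES_\I$ is free of rank $2$ over $\mathcal{O}_\kappa$. For the target, the eigenspace cut out by $P_{\kappa^*}$ corresponds, under the classical Eichler--Shimura isomorphism and the Jacquet--Langlands lift $f_{\GL_2,\kappa}\mapsto f_\kappa$, to the two-dimensional space attached to the ordinary newform $f_{\GL_2,\kappa}$, and hence is also free of rank $2$ over $\mathcal{O}_\kappa$.

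To prove that $\Phi_\kappa$ is an isomorphism I would transport the known $\GL_2$ control theorem. Recall from the proof of Theorem \ref{thm1.2} that $\ES_\mathcal{O}^\ord\simeq\mathbf{T}_\mathcal{O}^\ord$ is isomorphic, as a Hecke module, to the $D$-new quotient $\widetilde{T}_\infty^\ord$ of Hida's big ordinary Galois representation for $\GL_2$ of tame level $\Gamma_0(MD)$, and that all the structures in play (the $U_p^*$-ordinary projector, the diamond operators, and the specialization maps) are compatible with Jacquet--Langlands, which is Hecke-equivariant. The control theorem in the $\GL_2$ case (\cite[\S2.5]{Ohta-ES}, building on Hida's work and \cite[\S8]{hida-galois}) asserts precisely that the analogue of $\Phi_\kappa$ for $\widetilde{T}_\infty^\ord$ is an isomorphism onto the corresponding eigenspace; transporting this through the Jacquet--Langlands identification yields the claim. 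Alternatively, once rank-$2$ freeness is known on both sides, it suffices to show $\Phi_\kappa$ is surjective: a surjection of free modules of equal finite rank over the local ring $\mathcal{O}_\kappa$ is automatically an isomorphism by Nakayama.

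I expect the main obstacle to be the careful bookkeeping of the dual ($\ast$) Hecke action and the inertia-twist by diamond operators: one must check that the identification $\ES_\mathcal{O}^\ord\simeq\widetilde{T}_\infty^\ord$ and the specialization map $\mathrm{sp}_{k,m}$ respect the passage from $T$ to $T^*$ and the nebentype twist recorded in Proposition \ref{isoprop1}, so that the point $P_{\kappa^*}$ on the quaternionic side matches the correct arithmetic point on the $\GL_2$ side. A secondary point requiring care is that the base change $\ES_\mathcal{O}^\ord\otimes_{\mathfrak{h}_\infty^{\ord,*}}\I^*$ is well-behaved at the arithmetic prime $P_{\kappa^*}$, so that reduction modulo $P_{\kappa^*}$ commutes with the specialization map; this is ensured by the freeness in Theorem \ref{thm1.2} together with the regularity of $\mathcal{O}_\kappa$.
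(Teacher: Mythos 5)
Your construction of the comparison map $\Phi_\kappa$ from the specialization map \eqref{specmap}, and your rank count on both sides (source free of rank $2$ over $\mathcal{O}_\kappa$ by Theorem \ref{thm1.2}; target an $\mathcal{O}_\kappa$-lattice in the $2$-dimensional eigenspace furnished by classical Eichler--Shimura theory for Shimura curves), are both sound and consistent with what the paper does. The genuine gap is in the step you rely on for surjectivity: you propose to ``transport'' the $\GL_2$ control theorem through the Jacquet--Langlands correspondence, but there is nothing to transport it along. The identification $\ES_\mathcal{O}^\ord\simeq\widetilde{T}_\infty^\ord$ used in the proof of Theorem \ref{thm1.2} is an abstract isomorphism of Hecke modules at the level of inverse limits; Jacquet--Langlands matches Hecke \emph{eigensystems}, but it does not produce any Hecke-equivariant map (integral or otherwise) between the finite-level quaternionic cohomology $H^1(\Gamma_m,\Sym^{k-2}(\mathcal{O}_\kappa))$ and the finite-level $\GL_2$ cohomology. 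Consequently there is no commutative square whose vertical arrows identify $\Phi_\kappa$ with its $\GL_2$ analogue, and the surjectivity of the latter implies nothing about the surjectivity of the former. Note also that surjectivity is genuinely the crux here: since $\mathcal{O}_\kappa$ is a discrete valuation ring, an injective map of free rank-$2$ modules (e.g.\ multiplication by $p$) need not be onto, so equal ranks plus injectivity cannot close the argument.

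The paper instead proves the theorem entirely on the quaternionic side: it is a reformulation of the control theorem \cite[Theorem 2.19]{LV-IJNT}, where one uses that $\Gamma_m$ is torsion-free and, crucially, the lifting argument of \cite[Supplement 5.3]{AS} to establish surjectivity of the injective specialization map of \cite[Lemma 2.15]{LV-IJNT}. That Ash--Stevens argument (lifting eigenclasses with coefficients in $\Sym^{k-2}$ to classes with measure-valued coefficients, using ordinarity) is exactly the ingredient your proposal is missing; to repair your proof you would need to carry out this lifting directly for $\Gamma_m$, rather than invoke the $\GL_2$ statement.
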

\begin{proof}
    This is a reformulation of \cite[Theorem 2.19]{LV-IJNT}, taking into account that $\Gamma_m$ is torsion-free and using the 
    argument in \cite[Supplement 5.3]{AS} to control the surjectivity of the injective map in \cite[Lemma 2.15]{LV-IJNT}. 
\end{proof}

\subsection{Hodge--Tate exact sequence} \label{HT}
Denote 
\[\mathfrak{A}_m^*=e^{\ord,*}H^1_{\et}(\overline{X}_m,\Z_p)^{I_{\Q_p}},\]
\begin{equation}\label{def:Bm*}
\mathfrak{B}_m^*=e^{\ord,*}H^1_{\et}(\overline{X}_m,\Z_p)/e^{\ord,*}H^1_{\et}(\overline{X}_m,\Z_p)^{I_{\Q_p}}\end{equation}
the Hecke modules appearing in Proposition \ref{isoprop1}. 

Define 
$\mathfrak{A}_\infty^*=\invlim\mathfrak{A}_m^*$, 
$\mathfrak{B}_\infty^*=\invlim\mathfrak{B}_m^*$. 
If we let $\Lambda=\Z_p[[\Z_p^\times]]$, $\widetilde{\Lambda}=\Z_p[[\mathcal{Z}_{MD}]]$ and, as in the proof of Proposition \ref{isoprop1} we denote $\iota\colon \mathcal{Z}_{MD}\hookrightarrow \widetilde{\Lambda}$ the inclusion of group-like elements of $\mathcal{Z}_{MD}$ (defined in \eqref{ZMD}) in $\widetilde{\Lambda}$, then $\mathfrak{A}_\infty^*$ and $\mathfrak{B}_\infty^*$ are equipped with canonical structure of $\widetilde\Lambda$-modules such that 
$\iota(a)$ acts via $\langle a\rangle^*$.  
Taking inverse limits in Proposition \ref{isoprop1} (and using that these are all free $\Z_p$-modules of finite rank) we obtain an exact sequence of $\widetilde\Lambda$-modules (which we call \emph{Tate exact sequence} in analogy with the case of $p$-divisible groups): 
\[0\longrightarrow \mathfrak{A}_\infty^*\longrightarrow  \ES_{\Z_p}^\ord\longrightarrow \mathfrak{B}_\infty^*\longrightarrow 0.\] It follows from Proposition \ref{isoprop1} that 
$\mathfrak{A}_\infty^* = (\ES_{\Z_p}^\ord)^{I_{\Q_p}},$ 
and that the action of $\sigma\in I_{\Q_p}$ on $\mathfrak{B}_\infty^*$ is via the character $\chi_\cyc(\sigma)^{-1}\iota(\chi_\cyc(\sigma))^{-1}.$

Theorem \ref{CT} in the case $\mathcal{O}=\Z_p$
give an isomorphism of $\Lambda$-modules
\[\ES^\ord_{\Z_p}/({\omega_m}\footnote{E: why is it $\omega_m$ in here?})\overset{\sim}\longrightarrow e^{\ord,*}H^1_{\et}(X_m,\Z_p)\]
where $(\omega_m)=\prod_\kappa P_{\kappa^*}$, the product ranges over all arithmetic points $\kappa$ of weight $2$ and conductor $p^m$ and $P_{\kappa^*}$ is the kernel of $\kappa^*$ (compare with \cite[(2.1.2)]{OhtaMA}; alternatively, one may deduce the statement directly from Theorem \ref{CT} 
over $\mathcal{O}$ and using that $\Lambda_\mathcal{O}$ is faithfully flat over $\Lambda_{\Z_p}$ by 
\cite[Lemma 2.1.1]{OhtaMA}). We thus have a diagram with exact horizontal lines:
\begin{equation}\label{dia1}
\xymatrix{&\mathfrak{A}^*_\infty/(\omega_m)\ar[r]\ar[d] & \ES^\ord_{\Z_p}/(\omega_m)\ar[r] \ar[d]& \mathfrak{B}^*_\infty/(\omega_m)\ar[r] \ar[d]& 0\\
0\ar[r] & \mathfrak{A}^*_m \ar[r] &e^{\ord,*}H^1_{\et}(\overline{X}_m,\Z_p)\ar[r] & \mathfrak{B}^*_m\ar[r] & 0.}
\end{equation} 

Let $\mathfrak{O}$ be the valuation ring of a complete field extension $F/\Q_p$ 
which contains all the roots of unity and the maximal unramified extension of $\Q_p$, and denote $\Lambda_\mathfrak{O}\defeq \Lambda\otimes_{\Z_p}\mathfrak{O}$.
We obtain from \eqref{dia1} surjective homomorphisms 
\begin{equation}\label{O2.1.4}
    \mathfrak{B}_\infty^*\otimes_{\Lambda}\Lambda_\mathfrak{O}\longepi \mathfrak{B}_m^*\otimes_{\Lambda}\Lambda_\mathfrak{O}=\mathfrak{B}_m^*\otimes_{\Z_p}\mathfrak{O}.
\end{equation}
Taking inverse limits over $m$ we thus obtain homomorphisms: 
\begin{equation}\label{O2.1.5}
\mathfrak{B}_\infty^*\otimes_{\Lambda}\Lambda_\mathfrak{O}\longrightarrow \invlim_m\left(\mathfrak{B}^*_m\otimes_{\Z_p}\mathfrak{O}\right).\end{equation}

\begin{lemma}\label{O2.1.5lemma}
The homomorphism \eqref{O2.1.5} is  injective. 
\end{lemma}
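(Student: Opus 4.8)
The plan is to show the map \eqref{O2.1.5} is injective by proving that its kernel, namely $\bigcap_m K_m$ with $K_m=\ker\bigl(\mathfrak{B}_\infty^*\otimes_\Lambda\Lambda_\mathfrak{O}\to\mathfrak{B}_m^*\otimes_{\Z_p}\mathfrak{O}\bigr)$, is zero. First I would record that $\mathfrak{B}_\infty^*$ is a finitely generated $\Lambda$-module: by the Tate exact sequence it is a quotient of $\ES^\ord_{\Z_p}$, which by Theorem \ref{thm1.2} is finitely generated over $\widetilde{\Lambda}$, and $\widetilde{\Lambda}$ is module-finite over $\Lambda$. Writing $M'\defeq\mathfrak{B}_\infty^*\otimes_{\Z_p}\mathfrak{O}=\mathfrak{B}_\infty^*\otimes_\Lambda\Lambda_\mathfrak{O}$ and using that $\mathfrak{O}$ is $\Z_p$-flat, the level-$m$ projection factors as $M'\twoheadrightarrow (\mathfrak{B}_\infty^*/\omega_m\mathfrak{B}_\infty^*)\otimes_{\Z_p}\mathfrak{O}\to\mathfrak{B}_m^*\otimes_{\Z_p}\mathfrak{O}$, the first arrow being the reduction modulo $\omega_m$ from \eqref{O2.1.4}.

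Next I would analyze the second arrow at finite level. The diagram \eqref{dia1} has exact rows and its middle vertical arrow is the isomorphism recalled just before it; the snake lemma then shows that $\mathfrak{B}_\infty^*/\omega_m\mathfrak{B}_\infty^*\to\mathfrak{B}_m^*$ is surjective with kernel canonically isomorphic to $\coker\bigl(\mathfrak{A}_\infty^*/\omega_m\mathfrak{A}_\infty^*\to\mathfrak{A}_m^*\bigr)$. Tensoring the resulting short exact sequences with the flat $\Z_p$-algebra $\mathfrak{O}$ and passing to the inverse limit, injectivity of $\invlim_m\bigl((\mathfrak{B}_\infty^*/\omega_m)\otimes\mathfrak{O}\bigr)\to\invlim_m\bigl(\mathfrak{B}_m^*\otimes\mathfrak{O}\bigr)$ reduces to the vanishing of $\invlim_m\bigl(\coker(\cdots)\otimes\mathfrak{O}\bigr)$. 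Here I would invoke the explicit description of the inertia action in Proposition \ref{isoprop1}: inertia acts trivially on the $Q_m^*$-side (hence on $\mathfrak{A}_m^*$) and through a nontrivial ramified character on the $\mathfrak{B}$-side, which forces these cokernels to form a null system in the limit.

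It then remains to prove that $M'\to\invlim_m\bigl(M'/\omega_mM'\bigr)$ is injective, i.e. that $\bigcap_m\omega_mM'=0$. The key input is that $\omega_m$ is a distinguished polynomial of degree $p^{m-1}$ whose lowest-degree term is $p^{m-1}T$, so that $\omega_m\in\mathfrak{m}^m$ for $\mathfrak{m}$ the maximal ideal of the relevant local factor of $\Lambda$; hence $\omega_mM'\subseteq\mathfrak{m}^mM'$ and it suffices to see that $M'$ is $\mathfrak{m}$-adically separated. This I would deduce from $\Lambda$ being Noetherian: Artin--Rees applied to the finitely generated $\Lambda$-module $\mathfrak{B}_\infty^*$ and the submodule $p\mathfrak{B}_\infty^*$ gives a constant $c$ with $\mathfrak{m}^m\mathfrak{B}_\infty^*\cap p\mathfrak{B}_\infty^*\subseteq p\mathfrak{m}^{m-c}\mathfrak{B}_\infty^*$; since $\mathfrak{O}$ is flat and flat base change commutes with finite intersections, the same inclusion holds for $M'$, showing $\bigcap_m\mathfrak{m}^mM'$ is $p$-divisible, and the $p$-adic separatedness of $M'$ (which follows from $\bigcap_np^n\mathfrak{B}_\infty^*=0$ together with $\bigcap_np^n\mathfrak{O}=0$, as $\mathfrak{O}$ is the valuation ring of a complete field) then forces $\bigcap_m\mathfrak{m}^mM'=0$.

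The \textbf{main obstacle} is precisely that $\otimes_{\Z_p}\mathfrak{O}$ does not commute with the inverse limit over $m$, because $\mathfrak{O}$ is infinitely ramified and in particular non-Noetherian; one cannot argue purely over $\Lambda_\mathfrak{O}$. The whole strategy above is designed to circumvent this by performing every genuinely infinite operation (Krull/Artin--Rees separatedness, finite generation, Noetherianity) at the level of $\mathfrak{B}_\infty^*$ over $\Lambda$ and only then base-changing along the \emph{flat} map $\Z_p\to\mathfrak{O}$, which commutes with the finite intersections and cokernels that remain. The secondary delicate point, controlling $\invlim_m\bigl(\coker(\mathfrak{A}_\infty^*/\omega_m\to\mathfrak{A}_m^*)\otimes\mathfrak{O}\bigr)$, is where the ramified/unramified splitting from Proposition \ref{isoprop1} does the real work.
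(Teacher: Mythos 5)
Your reduction of the injectivity of \eqref{O2.1.5} to the two statements (a) $\bigcap_m\omega_m M'=0$ and (b) $\invlim_m\bigl(\coker(\mathfrak{A}_\infty^*/\omega_m\to\mathfrak{A}_m^*)\otimes_{\Z_p}\mathfrak{O}\bigr)=0$ is logically sound, but (b) is a genuine gap, and it is the fatal one. The inertia description of Proposition \ref{isoprop1} does not prove it: since all maps in \eqref{dia1} are $I_{\Q_p}$-equivariant, your cokernel $C_m$ is a quotient of $\mathfrak{A}_m^*$ (trivial inertia action) sitting inside $\mathfrak{B}_\infty^*/\omega_m$ (inertia acting through $\chi_\cyc^{-1}\iota(\chi_\cyc)^{-1}$), so all one can extract is that $C_m$ is killed by the elements $\iota(u)-u^{-1}$, $u\in\Z_p^\times$, together with $\omega_m$. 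These two ideals are coprime, so each $C_m$ is a finite $p$-group, but its exponent is a priori bounded only by the valuation of $\omega_m$ at the character $[u]\mapsto u^{-1}$, which grows with $m$; and an inverse system of finite groups of growing exponent can have a nonzero limit ($\invlim_m\Z/p^m\Z=\Z_p$). So no ``null system'' conclusion follows. Actually proving (b) would amount to a finite-level control statement (surjectivity of $\mathfrak{A}_\infty^*/\omega_m\to\mathfrak{A}_m^*$ up to uniformly bounded error), which is precisely the kind of quaternionic control theorem the paper says it cannot establish (Remarks \ref{remohta} and \ref{remiso}). The paper's own proof, following \cite[Lemma (2.1.6)]{OhtaMA}, is designed to avoid finite-level exactness altogether: setting $\mathbf{B}_m=\ker(\mathfrak{B}_\infty^*\to\mathfrak{B}_m^*)$, it uses only the tautology $\bigcap_m\mathbf{B}_m=0$ (valid by the definition of the inverse limit) and Chevalley's theorem \cite[IV, \S2, Cor.4]{Bourbaki-CA} --- applicable because $\mathfrak{B}_\infty^*$ is finitely generated over $\Lambda$ --- to conclude $\mathbf{B}_m\subseteq(p,T)^n\mathfrak{B}_\infty^*$ for $m$ large, and then finishes by a separatedness argument.

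Your argument for (a) also has gaps, though repairable ones. The Artin--Rees inclusion $(p,T)^mM'\cap pM'\subseteq p(p,T)^{m-c}M'$, together with $p$-torsion-freeness of $M'$, gives $N\cap pM'\subseteq pN$ for $N=\bigcap_m(p,T)^mM'$; it does \emph{not} give $N\subseteq pM'$, so the $p$-divisibility of $N$ does not follow as claimed. Moreover, the parenthetical assertion that $p$-adic separatedness of $\mathfrak{B}_\infty^*$ and of $\mathfrak{O}$ implies that of $M'=\mathfrak{B}_\infty^*\otimes_{\Z_p}\mathfrak{O}$ is not a valid deduction: it is exactly the infinite-intersection-versus-base-change pathology your strategy was meant to circumvent. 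Both points are fixed by the device used in the paper: since each $\mathfrak{B}_m^*$ is $\Z_p$-free, $\mathfrak{B}_\infty^*$ has no nonzero finite $\Lambda$-submodule, hence by the structure theorem it embeds into an elementary module $E$ (a direct sum of copies of $\Lambda$, $\Lambda/(f^n)$ and $\Lambda/(p^n)$); flatness of $\Lambda_\mathfrak{O}$ over $\Lambda$ yields $M'\hookrightarrow E\otimes_\Lambda\Lambda_\mathfrak{O}$, and the latter is $(p,T)$-adically separated by an elementary check on each summand (embedding them into $\mathfrak{O}[[T]]$, $\mathfrak{O}^{\deg f^n}$ and $(\mathfrak{O}/p^n\mathfrak{O})[[T]]$, respectively). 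In short: the separatedness half of your plan survives once the structure-theorem embedding is added, but the finite-level half cannot be completed with the tools available in the paper.
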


\begin{proof} The proof is taken from \cite[Lemma 2.1.6]{OhtaMA}.
    Let $\mathbf{B}_m$ be the kernel of $\mathfrak{B}_\infty^*\rightarrow \mathfrak{B}_m^*$. 
    By \cite[Lemma (2.1.1)]{OhtaMA}, $\Lambda_\mathfrak{O}$ is faithfully flat as $\Lambda$-algebra, and therefore it is enough to prove 
    that $\bigcap_m(\mathbf{B}_m\otimes_{\Lambda}\Lambda_\mathfrak{O})=0$ in $\mathfrak{B}_\infty^*\otimes_\Lambda\Lambda_\mathfrak{O}$. For this, 
    since each $\mathfrak{B}_m^*$ is $\Z_p$-free by Proposition \ref{isoprop1}, $\mathfrak{B}_\infty^*$ has no non-zero finite $\Lambda$-submodule, 
    so the structure theorem of $\Lambda$-modules shows that there is an exact sequence of $\Lambda$-modules 
    \[0\longrightarrow \mathfrak{B}_\infty^*\longrightarrow E\longrightarrow G\longrightarrow0 \]
where $G$ is a finite $\Lambda$-module, and $E$ is the direct sum of copies of $\Lambda$ and torsion $\Lambda$-modules of the form $\Lambda/(f^n)$ 
for a distinguished polynomial $f$, or $\Lambda/(p^n)$ for some integer $n\geq 1$. Now $\bigcap_m\mathbf{B}_m=0$, and therefore by \cite[IV, \S2, Cor.4]{Bourbaki-CA}, for any $n\ge 1$ we see that 
$\mathbf{B}_m\subseteq (p,T)^n\mathfrak{B}_\infty^*$ for $m$ sufficiently large. Hence 
\[\bigcap_{m\geq 1}\left(\mathbf{B}_m\otimes_{\Lambda}\Lambda_\mathfrak{O}\right)\subseteq \bigcap_{n\geq 1}(p,T)^n (\mathfrak{B}_\infty^*\otimes_\Lambda\Lambda_\mathfrak{O})
\subseteq \bigcap_{n\geq 1}(p,T)^n\left(E\otimes_\Lambda\Lambda_\mathfrak{O}\right).\]
The result follows.
\end{proof}

Recall that $\mathfrak{B}_m^*$ 
is isomorphic to $\Hom(\Ta_p(\mathcal{G}_m^0),\Z_p)$ (by Proposition \ref{isoprop1} and \eqref{def:Bm*})
and that $\mathcal{G}_m$ is ordinary (Proposition \ref{multiplicativetype}). 
We then have a map 
\begin{equation}\label{Ohta2.1.7}
\mathfrak{B}_m^*\otimes_{\Z_p}\mathfrak{O}\simeq\Cot(\mathcal{G}^{*,0}_{m}\otimes\mathfrak{O})(-1)\simeq
e^{\ord,*}\Cot(\mathcal{A}_m^{*,0}\otimes\mathfrak{O})(-1)\longmono S_2(\Gamma_m,F)(-1)\end{equation}
where $\cdot\otimes\mathfrak{O}$ actually means $\cdot\otimes_{\Z_p[\zeta_{p^m}]}\otimes\mathfrak{O}$. The first isomorphism 
is due to Tate \cite[Section 4]{Tate-pdiv}. 
The second isomorphism can be justified as in \cite[(2.1.8), (2.1.9)]{OhtaMA} as follows. From the standard exact sequence: 
\[0\longrightarrow \mathcal{A}_m^{*,0}[p^n]\longrightarrow \mathcal{A}_m^{*,0}\overset{p^n}\longrightarrow \mathcal{A}_m^{*,0}\longrightarrow 0\] we extract for each $n\geq0$ the isomorphism 
$\Cot(\mathcal{A}_m^{*,0})/(p^n)\simeq\Cot(\mathcal{A}_m^{*,0}[p^n])$
and by definition of the fixed part we have an isomorphism of the cotangent spaces of 
$\mathcal{A}_m^{*,0}[p^n]$ and $ (\mathcal{A}_m^{*,0}[p^n])^\mathrm{f}$ along the unit section. 
We thus obtain isomorphisms $e^{\ord,*}\Cot(\mathcal{A}_m^{*,0})/(p^n)\simeq \Cot(\mathcal{G}_m^{*,0}[p^n])$.
Taking the projective limit in $n$ and tensoring by $\mathfrak{O}$ over $\Z_p[\zeta_{p^m}]$ gives the isomorphism stated before. 
The last map is the composition 
\begin{equation}\label{Ohta3.3.4.1}
\Cot(\mathcal{A}_m^{*,0}\otimes\mathfrak{O})\longmono 
\Cot(\mathcal{J}_m^{*,0}\otimes\mathfrak{O})\longmono 
\Cot(J_m^*\otimes_\Q F)\longmono 
S_2(\Gamma_m,F),\end{equation}
where $\mathcal{J}_m$ is the N\'eron model of $J_m$ over $\Z_p[\zeta_{p^m}]$, $\mathcal{J}_m^0$ its connected component (\cite[\S1b)]{Mazur-Rational}) and $\cdot\otimes\mathfrak{O}$ once again means $\cdot\otimes_{\Z_p[\zeta_{p^m}]}\otimes\mathfrak{O}$.

{
\begin{lemma}\label{lemmaintegral} 
\begin{enumerate}
\item The image of $e^{\ord,*}\Cot(\mathcal{A}_m^{*,0}\otimes\mathfrak{O})$ via the map \eqref{Ohta3.3.4.1} is contained in $\mathfrak{S}_m^*$. 
\item The image of  
\eqref{Ohta2.1.7} is contained in $\mathfrak{S}_m^*(-1)$. 
\end{enumerate}
\end{lemma}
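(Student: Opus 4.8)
The plan is to deduce both statements from Proposition \ref{propinteg}. First I observe that the Tate twist $(-1)$ appearing in \eqref{Ohta2.1.7} does not change the underlying $\mathfrak{O}$-submodule of $S_2(\Gamma_m,F)$; since \eqref{Ohta2.1.7} factors through the isomorphism $\mathfrak{B}_m^*\otimes_{\Z_p}\mathfrak{O}\simeq e^{\ord,*}\Cot(\mathcal{A}_m^{*,0}\otimes\mathfrak{O})(-1)$ followed by \eqref{Ohta3.3.4.1}, its image coincides, as a subset of $S_2(\Gamma_m,F)$, with the image $S$ of $e^{\ord,*}\Cot(\mathcal{A}_m^{*,0}\otimes\mathfrak{O})$ under \eqref{Ohta3.3.4.1}. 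Hence (2) reduces to (1), and it suffices to prove $S\subseteq\mathfrak{S}_m^*$. By Proposition \ref{propinteg} this follows once I check the two hypotheses: that $S\subseteq S_2^*(\Gamma_m,\mathfrak{O})$ and that $S$ is $U_p^*$-stable.

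For the $U_p^*$-stability I would argue by functoriality. The operators $T_\ell^*$, $U_\ell^*$ and $\langle a\rangle^*$ act on $A_m^*$ by construction; by the Néron mapping property they extend to the Néron model $\mathcal{A}_m^*$, preserve the connected component $\mathcal{A}_m^{*,0}$, and therefore act on $\Cot(\mathcal{A}_m^{*,0}\otimes\mathfrak{O})$. Since $e^{\ord,*}$ commutes with $U_p^*$ and the maps composing \eqref{Ohta3.3.4.1} are equivariant for the $*$-action (so that on $S_2(\Gamma_m,F)$ the operator corresponds to $f\mapsto f|U_p^*$), the image $S$ is stable under $f\mapsto f|U_p^*$.

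The containment $S\subseteq S_2^*(\Gamma_m,\mathfrak{O})$, i.e. $f|\tau_m\in S_2^\ST(\Gamma_m,\mathfrak{O})$ for every $f\in S$, is the heart of the matter, and I would obtain it directly from the definition $A_m^*=J_m/\omega_m(\ker(\alpha_m))$. The Atkin--Lehner involution $\omega_m$ carries $\ker(\alpha_m)$ isomorphically onto $\omega_m(\ker(\alpha_m))$, hence descends to an isomorphism $\bar\omega_m\colon A_m\to A_m^*$ satisfying $\bar\omega_m\circ\alpha_m=\alpha_m^*\circ\omega_m$, where $\alpha_m^*\colon J_m\to A_m^*$ is the quotient map. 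Passing to cotangent spaces gives $\Cot(\alpha_m)\circ\Cot(\bar\omega_m)=\Cot(\omega_m)\circ\Cot(\alpha_m^*)$, and since $\Cot(\omega_m)$ corresponds to $g\mapsto g|\tau_m$ under $\Cot(J_m\otimes F)\simeq S_2(\Gamma_m,F)$, I conclude that $f|\tau_m$ lies in the image of the unstarred map $\Cot(\mathcal{A}_m^0\otimes\mathfrak{O})\to S_2(\Gamma_m,F)$. This map factors through $\Cot(\mathcal{J}_m^0\otimes\mathfrak{O})\hookrightarrow S_2(\Gamma_m,\mathfrak{O})$ by \eqref{Cot-MF}, so $f|\tau_m\in S_2(\Gamma_m,\mathfrak{O})\subseteq S_2^\ST(\Gamma_m,\mathfrak{O})$ by \eqref{KatzinST}. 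With both hypotheses in hand, Proposition \ref{propinteg} yields $S\subseteq\mathfrak{S}_m^*$, proving (1) and hence (2).

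The step I expect to be the main obstacle is the compatibility in the previous paragraph between the Atkin--Lehner isomorphism and the integral (Néron model) structures. Indeed $\omega_m$ is only defined over $\Q(\zeta_{MDp^m})$ by Lemma \ref{lemmmaAL}, while the Néron models $\mathcal{A}_m^0$ and $\mathcal{A}_m^{*,0}$ are taken over $\Z[\zeta_{p^m}]$, so one must verify that after base change to $\mathfrak{O}$ the isomorphism $\bar\omega_m$ extends to the models and respects connected components, so that it identifies the image of $\Cot(\mathcal{A}_m^{*,0}\otimes\mathfrak{O})$ under $f\mapsto f|\tau_m$ with that of $\Cot(\mathcal{A}_m^0\otimes\mathfrak{O})$. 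Care is needed here because Néron models are sensitive to the ramified extension $\Z_p[\zeta_{p^m}]/\Z_p$; I would handle this by working throughout over $\mathfrak{O}$ (as the maps \eqref{Ohta3.3.4.1} and \eqref{Ohta2.1.7} already do, since $\mathfrak{O}$ contains all roots of unity) and invoking that an isomorphism of abelian varieties whose extension to the Néron models is again an isomorphism necessarily matches the connected components and the cotangent lattices.
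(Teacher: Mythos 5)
Your proposal is correct and takes essentially the same approach as the paper's proof: reduce (2) to (1), then apply Proposition \ref{propinteg} after checking that the image is $U_p^*$-stable and contained in $S_2^*(\Gamma_m,\mathfrak{O})$. The paper compresses the latter containment into the phrase ``by definition'' (of $A_m^*$ as the Atkin--Lehner twist of $A_m$), which is exactly the descent argument via $\bar\omega_m$ that you spell out; the ramification worry you flag is harmless because $MD$ is prime to $p$, so $\Z_p[\zeta_{MDp^m}]$ is \'etale over $\Z_p[\zeta_{p^m}]$ and the N\'eron models, their identity components, and their cotangent lattices are preserved under this base change.
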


\begin{proof}
(2) follows from (1), so we only need to show the first statement, which follows  
from Proposition \ref{propinteg} using that the $\mathfrak{O}$-module 
$e^{\ord,*}\Cot(\mathcal{A}_m^{*,0}\otimes\mathfrak{O})$ 
is $U_p^*$-stable, and the following fact: 
the $\mathfrak{O}$-module 
$e^{\ord,*}\Cot(\mathcal{A}_m^{*,0}\otimes\mathfrak{O})$ is contained in $S_2^*(\Gamma_m,\mathfrak{O})$. The argument to prove the last statement is similar to the proof of Lemma \ref{lemma_Cot-MF}
(and \cite[Proposition (3.3.6)]{Ohta-ES}). 
With notation as in  Lemma \ref{lemma_Cot-MF}, 
  the $T_x$-expansion of the map \eqref{Ohta3.3.4.1} is the scalar extension to $\mathfrak{O}$ of the pull-back of differential forms via the map 
\[\Spec(\mathcal{R}_x^\mathrm{univ})\simeq\Spec(\widehat{\mathcal{O}}_{\mathcal{X}_{m},x_m})\longrightarrow
\mathcal{X}_m^\mathrm{sm}\longrightarrow\mathcal{J}_m\rightarrow\mathcal{A}_m^*.\] 
Since the action of $\tau_m$ preserves regular differentials, the claim follows. 
\end{proof}

Taking projective limits, and combining Lemma \ref{O2.1.5lemma} with Lemma \ref{lemmaintegral}, 
{we obtain an injective homomorphism of $\Lambda_\mathfrak{O}$-modules} 
\begin{equation}\label{mapmainthm}
(\mathfrak{B}_\infty^*\otimes_{\Lambda}\Lambda_\mathfrak{O})(1)\longmono \mathfrak{S}_\infty^*. 
\end{equation}

\begin{remark}\label{remiso}
    We conjecture that \eqref{mapmainthm} is an isomorphism, as in the $\GL_2$-case. The proof could be probably obtained by means of a finer control theorem for the inverse limit of modular forms. 
\end{remark}

We now discuss the case of primitive Hida families. 
Let $\mathcal{O}$ be the valuation ring of a finite unramified extension of $\Q_p$, and let $\mathbb{I}$ be a primitive component of the Hida Hecke algebra $\mathfrak{h}_\infty^\ord$ acting on the \'etale cohomology of $X_m$ with coefficients in $\mathcal{O}$; 
therefore, $\mathbb{I}$ is an integral finite flat extension of $\Lambda_\mathcal{O}=\mathcal{O}[[1+p\Z_p]]$. 
Since we need to work over the bigger ring $\mathfrak{O}$, we set $\mathbb{J}=\mathbb{I}\otimes_{\mathcal{O}}\mathfrak{O}$. 
We also define 
$\mathbb{J}^*=\mathbb{I}^*\otimes_{\mathcal{O}}\mathfrak{O}$. 
Define 
\[\ES_{\J}=\ES_{\I}\otimes_{\I^*}\J^*= \mathrm{ES}_\mathcal{O}^\ord\otimes\J^*,\] 
and similarly $\mathfrak{B}_{\J}^*=\mathfrak{B}_\infty^*\otimes\J^*$ and {$\mathfrak{A}_\J^*=\mathfrak{A}_\infty^*\otimes\J^*$}, where the unadorned tensor products are taken 
over the Hida Hecke algebra $\mathfrak{h}_\infty^{\ord,*}$.

\begin{theorem}
    We have an exact sequence of $\J^*$-modules 
    \[0\longrightarrow \mathfrak{A}_{\J}^{*}\longrightarrow \ES_{\J}\longrightarrow 
\mathfrak{B}_{\J}^{*}\longrightarrow 0\] which splits as 
$\ES_{\J}\simeq \mathfrak{A}_{\J}^{*}\oplus \mathfrak{B}_{\J}^{*}$
and each summand is a free $\J^*$-module of rank $1$. 
\end{theorem}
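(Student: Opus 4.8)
The plan is to obtain the short exact sequence by applying $-\otimes_{\mathfrak{h}_\infty^{\ord,*}}\J^*$ to the Tate exact sequence of \S\ref{HT}, and to extract the splitting and the rank-one freeness of the two graded pieces from the freeness of $\ES_\J$ (Theorem \ref{thm1.2}) together with the $p$-distinguished hypothesis of \S\ref{PBHF}. First I would record that $\ES_\J=\ES_\I\otimes_\mathcal{O}\mathfrak{O}$ is free of rank $2$ over $\J^*=\I^*\otimes_\mathcal{O}\mathfrak{O}$, which is immediate from Theorem \ref{thm1.2} and flatness of $\mathfrak{O}$ over $\mathcal{O}$. Tensoring $0\to\mathfrak{A}_\infty^*\to\ES_{\Z_p}^\ord\to\mathfrak{B}_\infty^*\to0$ with $\J^*$ and using right-exactness yields a presentation
\[\mathfrak{A}_\J^*\overset{f}{\longrightarrow}\ES_\J\overset{g}{\longrightarrow}\mathfrak{B}_\J^*\longrightarrow0,\]
which will become the asserted sequence once $f$ is shown to be injective and the two outer terms are shown to be free of rank one.

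Next I would prove that $\mathfrak{B}_\J^*$ is free of rank one. The crucial input is that $I_{\Q_p}$ acts on $\mathfrak{B}_\infty^*$, hence on $\mathfrak{B}_\J^*$, through the character $\psi(\sigma)=\chi_\cyc(\sigma)^{-1}\iota(\chi_\cyc(\sigma))^{-1}$, while it acts trivially on $\mathfrak{A}_\infty^*=(\ES_{\Z_p}^\ord)^{I_{\Q_p}}$ (Proposition \ref{isoprop1} and the description following the Tate exact sequence). By the $p$-distinguished hypothesis the reduction $\bar\psi$ modulo the maximal ideal $\mathfrak{m}$ of $\J^*$ is nontrivial, so I may fix $\sigma_0\in I_{\Q_p}$ with $\psi(\sigma_0)-1\in(\J^*)^\times$. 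Reducing the presentation modulo $\mathfrak{m}$ gives a right-exact sequence of residue-field vector spaces whose middle term is the $2$-dimensional residual representation restricted to $G_{\Q_p}$; because $\sigma_0-1$ acts invertibly on $\mathfrak{B}_\J^*\otimes k$ but has nontrivial kernel on the full residual space, the quotient $\mathfrak{B}_\J^*\otimes_{\J^*}k$ is exactly one-dimensional. Nakayama then shows $\mathfrak{B}_\J^*$ is cyclic, and passing to $\mathrm{Frac}(\J^*)$ the same inertia argument shows it has generic rank one; a cyclic module of rank one over the domain $\J^*$ is free of rank one.

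With $\mathfrak{B}_\J^*$ free (hence projective), the surjection $g$ splits, so $\ES_\J\cong\ker(g)\oplus\mathfrak{B}_\J^*$, and comparing ranks shows $\ker(g)$ is free of rank one. The sequence $0\to\ker(g)\to\ES_\J\overset{g}{\to}\mathfrak{B}_\J^*\to0$ is $G_{\Q_p}$-equivariant, with $I_{\Q_p}$ acting trivially on $\ker(g)$ and through $\psi$ on $\mathfrak{B}_\J^*$; since $\psi(\sigma_0)-1$ is a unit, $(\mathfrak{B}_\J^*)^{I_{\Q_p}}=0$, and taking invariants identifies $\ker(g)=(\ES_\J)^{I_{\Q_p}}$. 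Concretely, $\tfrac{1}{\psi(\sigma_0)-1}(\sigma_0-1)$ furnishes the splitting. As $I_{\Q_p}$ acts trivially on $\mathfrak{A}_\J^*$, the image of $f$ lands in $(\ES_\J)^{I_{\Q_p}}=\ker(g)$, and by exactness $\im(f)=\ker(g)$, so $f$ is a surjection onto a free rank-one module.

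The main obstacle is precisely the injectivity of $f$: tensoring the Tate sequence could a priori create a kernel, measured by $\Tor_1^{\mathfrak{h}_\infty^{\ord,*}}(\mathfrak{B}_\infty^*,\J^*)$. I would resolve this by the intrinsic characterization just obtained, $\im(f)=(\ES_\J)^{I_{\Q_p}}=\ker(g)$, together with a Nakayama count showing $\mathfrak{A}_\J^*\otimes_{\J^*}k$ is one-dimensional; then $\mathfrak{A}_\J^*$ is cyclic of rank one, and a surjection between free rank-one modules over the Noetherian local domain $\J^*$ is an isomorphism. This forces $\ker(f)=0$ and $\mathfrak{A}_\J^*\cong\ker(g)$ free of rank one, delivering the exact sequence, the splitting $\ES_\J\simeq\mathfrak{A}_\J^*\oplus\mathfrak{B}_\J^*$, and the freeness of both summands. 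The role of the $p$-distinguished hypothesis is exactly to make $\psi(\sigma_0)-1$ a unit, which is what renders this local characterization available.
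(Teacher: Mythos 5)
Your proposal takes a genuinely different route from the paper's. The paper proves essentially nothing directly on the quaternionic side: it transports the whole package from Ohta's $\GL_2$ theorems through an isomorphism of $\J^*[G_\Q]$-modules $\ES_{\J}\simeq\ES_{\J}^{\Oh}$ (coming from the Jacquet--Langlands comparison underlying Theorem \ref{thm1.2}), and then matches $\mathfrak{A}_\J^*$ and $\mathfrak{B}_\J^*$ with their $\GL_2$ counterparts via the Galois-theoretic characterizations (inertia invariants, respectively the quotient on which $I_{\Q_p}$ acts through $\psi(\sigma)=\chi_\cyc(\sigma)^{-1}\iota(\chi_\cyc(\sigma))^{-1}$). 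You instead re-run Ohta's internal mechanism directly on the quaternionic Tate sequence. The parts of your argument concerning $\mathfrak{B}_\J^*$ and the splitting are sound and are indeed the engine inside Ohta's own proofs: the fiber of $\mathfrak{B}_\J^*$ is bounded because $\mathfrak{B}_\J^*$ is a \emph{quotient} of the free rank-two module $\ES_\J$ (right-exactness), the inertia action cuts it down to dimension one, and the projector $(\psi(\sigma_0)-1)^{-1}(\sigma_0-1)$ does split off $\ker(g)=(\ES_\J)^{I_{\Q_p}}$.

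The genuine gap is exactly where you locate the "main obstacle", and your proposed resolution does not close it. To get injectivity of $f$ you need the upper bound $\dim_k\bigl(\mathfrak{A}_\J^*\otimes_{\J^*}k\bigr)\le 1$, but you give no mechanism for it: unlike $\mathfrak{B}_\J^*$, the module $\mathfrak{A}_\J^*=\mathfrak{A}_\infty^*\otimes_{\mathfrak{h}_\infty^{\ord,*}}\J^*$ is \emph{not} presented as a sub or quotient of $\ES_\J$ -- left-exactness is destroyed by the tensor product precisely at this spot -- so its fiber can a priori be large, and $\ker(f)$ (your $\Tor_1$ term) cannot be ruled out by the intrinsic characterization $\im(f)=(\ES_\J)^{I_{\Q_p}}$, which only controls the image. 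Bounding that fiber amounts to controlling $\mathfrak{A}_\infty^*$ (equivalently $\mathfrak{B}_\infty^*$) as a module over the Hecke algebra; for $\GL_2$ this is Ohta's $\Lambda$-adic Eichler--Shimura theory, proved via $q$-expansions, and the paper states explicitly (Remarks \ref{remohta}, \ref{remwiles}, \ref{remiso}) that it cannot reproduce this quaternionically. That is precisely why the paper's proof routes through the $\GL_2$ isomorphism rather than arguing as you do.

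Two further points need repair even in the part that works. First, the unit property of $\psi(\sigma_0)-1$ does \emph{not} follow from $p$-distinguishedness: that hypothesis says the two characters of $\bar\rho|_{G_{\Q_p}}$ are distinct on the full decomposition group, and they may still agree on inertia (differing by an unramified character). What is true is that modulo the maximal ideal $\iota(\chi_\cyc)$ reduces to the tame nebentypus $\omega^{k_0-2}$ of the component, which is trivial because $k_0\equiv 2\pmod{2(p-1)}$, so $\bar\psi|_{I_{\Q_p}}=\omega^{-1}\neq 1$ since $p$ is odd; this computation, not $p$-distinguishedness, produces $\sigma_0$. Second, your claim that $\sigma_0-1$ has nontrivial kernel on the residual space needs the ordinary filtration \eqref{filtration} (the unramified rank-one quotient $\mathbf{T}^-$), which should be cited; and the Nakayama/cyclicity/"Noetherian local domain" steps need care, since $\mathfrak{O}$ is a non-discrete valuation ring containing all unramified extensions, so $\J^*$ is neither Noetherian nor obviously local or integral -- one should run these arguments over $\I^*$ (or componentwise) and base change afterwards.
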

\begin{proof}
We consider the same objects 
as before defined by over $\GL_2$ as in \cite{Ohta-ES, OhtaMA}, 
which we denote $\mathfrak{B}_{\J}^{*,\Oh}$ and $\mathfrak{A}_{\J}^{*,\Oh}$; we also consider 
$\ES_{\J}^{\Oh}$. Then we have an isomorphism of $\J^*[G_\Q]$-modules 
$\ES_{\J}^{\Oh}\simeq \ES_{\J}$. Moreover, we have an exact sequence 
\[0\longrightarrow \mathfrak{A}_{\J}^{*,\Oh}\longrightarrow \ES_{\J}^{\Oh}\longrightarrow 
\mathfrak{B}_{\J}^{*,\Oh}\longrightarrow 0\] which splits as 
$\ES_{\J}^{\Oh}\simeq \mathfrak{A}_{\J}^{*,\Oh}\oplus \mathfrak{B}_{\J}^{*,\Oh}$
and each summand is a free $\J^*$-module of rank $1$. Furthermore, we have 
$\mathfrak{A}_{\J}^{*, \Oh} = (\ES_{\J}^{\Oh})^{I_{\Q_p}}$ 
and $\sigma\in I_{\Q_p}$ acts on $\mathfrak{B}_{\J}^{*,\Oh}$ as $\chi_\cyc(\sigma)^{-1}\iota(\chi_\cyc(\sigma))^{-1}.$ It follows from the 
isomorphism $\ES_{\J}^{\Oh}\simeq \ES_{\J}$ that 
that 
$\mathfrak{A}_\J^*\simeq \mathfrak{A}_{\J}^{*,\Oh}$ is also $\J^*$-free of rank $1$, 
and similarly $\mathfrak{B}_\J^*\simeq \mathfrak{B}_{\J}^{*,\Oh}$ is also $\J^*$-free of rank $1$ 
and we have a decomposition 
$\ES_{\J}\simeq \mathfrak{A}_{\J}^{*}\oplus \mathfrak{B}_\J^*$. The result follows.\end{proof} 

Define 
\begin{equation}\label{defT}
\mathbf{T}=\mathbf{T}^\ord_\mathcal{O}\otimes_{\mathfrak{h}_\infty^\ord}\I,\end{equation} which is a free $\I$-module 
of rank $2$ equipped with a continuous $G_{\Q}$-action. 
As $G_{\Q}$-representations, 
$\mathbf{T}$ is the dual representation of $\ES_\I$; also,
$\mathbf{T}=(e_\mathcal{R}\mathbf{T}^\ord_\mathcal{O})\otimes_\mathcal{R}\I$.
As $G_{\Q_p}=\Gal(\overline{\Q}_p/\Q_p)$-representations, 
we have an exact sequence \begin{equation}\label{filtration}
0\longrightarrow \mathbf{T}^+\longrightarrow  \mathbf{T}\longrightarrow\mathbf{T}^-\longrightarrow 0\end{equation}
such that both $\mathbf{T}^+$ and $\mathbf{T}^-$ are free $\I$-modules of rank $1$, and $\mathbf{T}^-$ is unramified. More precisely, assume that $\I$ contains an arithmetic point $\kappa$ corresponding to 
a $p$-stabilized eigenform in $S_k(\Gamma_0(MDp))$; 
let $\Theta\colon G_\Q\rightarrow\Lambda^\times$ be
the choice of a critical character defined in \cite[Definition 2.1.3]{Howard-Inv} by
$\Theta(\sigma)=\omega^\frac{k-2}{2}[\langle\sigma\rangle^{1/2}]$ where 
$x\mapsto x^{1/2}$ is the unique square root of $x\in 1+p\Z_p$ and $\omega$ is the Teichmüller character. 
Then $G_{\Q_p}$ acts on $ \mathbf{T}^+$ via 
{$\eta^{-1}\chi_\cyc\Theta^2$} and acts on the unramified quotient 
$\mathbf{T}^-$ via $\eta$, where $\eta$ is an unramified character 
(see \cite[\S5.5]{LV-MM} and \cite[Corollary 6.5]{LV-MM} for details). 
We then define $\mathbf{T}_\J^\bullet=\mathbf{T}^\bullet\otimes_\I\J$ for $\bullet$ being +, $-$ or 
no symbol. Then we have 
$\mathfrak{B}_\J^*(1)\simeq \mathbf{T}_\J^+$. 

\subsection{Serre--Tate expansions}\label{subsec.STexp}
Since $\Lambda\rightarrow\J$ is a flat extension of local rings, by \cite[Theorem 7.2]{Matsumura} 
it is also faithfully flat, and we obtain a chain of injective maps 
\begin{equation}\label{STR}
\ST_\mathcal{R}\colon e_{\mathcal{R}^*}\mathfrak{B}_\infty^* \longmono \mathfrak{B}_\infty^*\longmono
\mathfrak{B}_\infty^*\otimes_{\Lambda_\mathfrak{O}}\J^*\overset{\eqref{mapmainthm}}\longmono \mathfrak{S}_\infty^*\otimes_{\Lambda_\mathcal{O}}\J^*\overset{\eqref{STexp}}\longmono \Lambda_\mathfrak{O}[[T_x]]\otimes_{\Lambda_\mathfrak{O}}\J\simeq\J[[T_x]].\end{equation}
Extending by $\J$-linearity via $x\otimes \lambda\mapsto \lambda\ST(x)$, we get a map of $\J$-modules 
(denoted with the same symbol) 
\begin{equation}\label{STJ}
\ST\colon \mathfrak{B}_\J^*\longrightarrow \J[[T_x]].\end{equation}
\begin{lemma} The map $\ST$ in $\eqref{STJ}$ is injective.\end{lemma}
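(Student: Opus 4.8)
The plan is to exploit the idempotent $e_{\mathcal{R}^*}$ in order to realize $\mathfrak{B}_\J^*$ as a base change of a direct summand of $\mathfrak{B}_\infty^*$, and then to route the injectivity question through the \emph{faithfully flat} extension $\Lambda_\mathfrak{O}\to\J^*$ rather than through the extension $\mathcal{R}^*\to\J^*$, which need not be flat. Since the action of $\mathfrak{h}_\infty^{\ord,*}$ on $\J^*$ factors through $\mathcal{R}^*$, the idempotent gives $\mathfrak{B}_\J^*=(e_{\mathcal{R}^*}\mathfrak{B}_\infty^*)\otimes_{\mathcal{R}^*}\J^*$, and $e_{\mathcal{R}^*}\mathfrak{B}_\infty^*$ is a direct summand of $\mathfrak{B}_\infty^*$. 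Write $\Psi$ for the composite of the last three arrows in the displayed chain, that is, $\Psi\colon \mathfrak{B}_\infty^*\otimes_{\Lambda_\mathfrak{O}}\J^*\longmono\J[[T_x]]$; by construction $\Psi$ is injective and $\J^*$-linear (identifying $\J^*\simeq\J$ via $T\mapsto T^*$), being the composition of \eqref{mapmainthm} and \eqref{STexp} base-changed along the flat map $\Lambda_\mathfrak{O}\to\J^*$.

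Next I would compare $\ST$ with $\Psi$ through the natural surjection
\[q\colon (e_{\mathcal{R}^*}\mathfrak{B}_\infty^*)\otimes_{\Lambda_\mathfrak{O}}\J^*\longepi (e_{\mathcal{R}^*}\mathfrak{B}_\infty^*)\otimes_{\mathcal{R}^*}\J^*=\mathfrak{B}_\J^*,\]
which is surjective because it is induced by the structure map $\Lambda_\mathfrak{O}\to\mathcal{R}^*$ (after extension of scalars to $\mathfrak{O}$), together with the summand inclusion $j\colon (e_{\mathcal{R}^*}\mathfrak{B}_\infty^*)\otimes_{\Lambda_\mathfrak{O}}\J^*\longmono \mathfrak{B}_\infty^*\otimes_{\Lambda_\mathfrak{O}}\J^*$. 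Evaluating on a simple tensor $x\otimes\lambda$ and using the $\J$-linearity of both $\Psi$ and $\ST$ together with the relation $\ST(x\otimes 1)=\Psi(x\otimes 1)$ (which is exactly the chain defining $\ST$ on $e_{\mathcal{R}^*}\mathfrak{B}_\infty^*$), one checks the identity $\Psi\circ j=\ST\circ q$.

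With this identity in hand the conclusion is formal: $j$ is injective, being the inclusion of the direct summand cut out by the idempotent $e_{\mathcal{R}^*}\otimes 1$, and $\Psi$ is injective, so $\ST\circ q=\Psi\circ j$ is injective; hence $q$ is injective, and being also surjective it is an isomorphism. Therefore $\ST=(\Psi\circ j)\circ q^{-1}$ is injective, as claimed. As a consistency check, this agrees with the preceding theorem, which makes $\mathfrak{B}_\J^*$ free of rank one over $\J^*$; one could alternatively run the argument by showing that the image under $\ST$ of a $\J^*$-generator is a non-torsion element of $\J[[T_x]]$, but the summand-and-surjection argument avoids having to discuss torsion in $\J$.

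I expect the main obstacle to be bookkeeping rather than conceptual. The map $\mathcal{R}^*\to\J^*$ is a finite but in general non-flat normalization, so one cannot conclude injectivity of $\ST$ by flat base change directly on $\mathfrak{B}_\J^*$; this is precisely why the lemma needs a proof and is not immediate from the displayed chain. Care is also required to keep track of the coefficient extension from $\mathcal{O}$ to $\mathfrak{O}$, and of the identification $\J^*\simeq\J$ when asserting that $\Psi$ is genuinely $\J^*$-linear. The decisive point is that replacing the non-flat extension $\mathcal{R}^*\to\J^*$ by the faithfully flat $\Lambda_\mathfrak{O}\to\J^*$ (already used to build the injective chain) lets the idempotent splitting carry the injectivity across.
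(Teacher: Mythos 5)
Your proof is correct, but it takes a genuinely different route from the paper's. The paper's own argument is purely module-theoretic: it invokes the preceding theorem to know that $\mathfrak{B}_\J^*$ is a \emph{free} $\J^*$-module of rank one, observes that $\J[[T_x]]$ is $\J$-torsion free, and concludes that the image of the extended map is either cyclic of rank one or trivial; injectivity of the original chain rules out the trivial case, and torsion-freeness then kills the annihilator of the image of a generator. Your argument never uses the rank-one freeness (you cite it only as a consistency check) nor any torsion considerations; instead you exploit the defining formula $x\otimes\lambda\mapsto\lambda\ST(x)$ to obtain the identity $\Psi\circ j=\ST\circ q$ and transport injectivity across the surjection $q$. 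This buys you independence from the freeness theorem and from the delicate point that $\J=\I\otimes_{\mathcal{O}}\mathfrak{O}$ need not be a domain, so that in the paper's argument the step from ``torsion free'' to ``zero annihilator'' requires some care. On the other hand, your proof leans maximally on the well-definedness of the simple-tensor formula for $\ST$ on $\mathfrak{B}_\J^*=(e_{\mathcal{R}^*}\mathfrak{B}_\infty^*)\otimes_{\mathcal{R}^*}\J^*$, which the paper asserts but does not verify (it amounts to a Hecke-equivariance property of the Serre--Tate expansion chain). Indeed, your argument shows that this well-definedness \emph{forces} $q$ to be an isomorphism, that is, the tensor products over $\Lambda_\mathfrak{O}$ and over $\mathcal{R}^*$ coincide --- a structural statement considerably stronger than the lemma, and far from automatic when $\mathcal{R}^*$ has $\Lambda$-rank larger than one. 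The paper's proof would survive any alternative construction of a nonzero $\J$-linear extension restricting to the chain, so it is the more robust of the two; yours is the more economical and avoids both the freeness theorem and the torsion discussion, given the stated definition of $\ST$.
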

\begin{proof} Since the 
$\J$-module $\mathfrak{B}_\J^*$ is  free of rank $1$, and the $\J$-module $\J[[T_x]]$ is $\J$-torsion free,
the image of $\ST$ in \eqref{STJ} is either a cyclic module of rank $1$ or trivial; since 
$\ST_\mathcal{R}$ in \eqref{STR} is injective, the second possibility does not occur, so 
$\ST$ must be injective as well. 
\end{proof}

{Define \[\mathbf{D}(\mathbf{T}^+)=(\mathbf{T}^+\widehat{\otimes}_{\Z_p}\Z_p^\unr)^{G_{\Q_p}}.\] 
By \cite[Lemma 3.3]{Ochiai-Coleman}, since $\mathbf{T}^+$ is unramified, $\mathbf{D}(\mathbf{T}^+)$ is a free $\I$-module of rank $1$. 
Since $\mathbf{T}_\J^+\simeq (\mathbf{T}^+\hat{\otimes}_{\Z_p}\Z_p^\unr)\hat\otimes_{\Z_p^\unr}\mathfrak{O}$ and $\mathbf{T}^+$ is unramified, we have that $\mathbf{D}(\mathbf{T}^+)\hat\otimes_{\Z_p^\unr}\mathfrak{O}$ is also equal to $(\mathbf{T}_\J^+)^{G_{\Q_p}}$. 
Fixing 
an $\I$-basis $\eta_\I$ of $\mathbf{D}(\mathbf{T}^+)$ then gives an $\I$-basis of $(\mathbf{T}_\J^+)^{G_{\Q_p}}$, 
which we denote with the same symbol $\eta_\I$. The canonical inclusion $(\mathbf{T}_\J^+)^{G_{\Q_p}}\subseteq \mathbf{T}_\J^+$ combined with the isomorphism $\mathbf{T}_\J^+\simeq\mathfrak{B}_\J^*(1)$ then gives 
by restriction an injective map (denoted with the same symbol)
\begin{equation}\label{STD}
\ST\colon \mathbf{D}(\mathbf{T}^+)\longrightarrow (\mathfrak{S}_\infty^*\otimes_{\Lambda_\mathcal{O}}\J^*)^{G_{\Q_p}}\longrightarrow \J[[T_x]]\end{equation}

\begin{lemma}\label{lemmaI}
The image $\ST(\eta_\I)$ belongs to $\widetilde{\I}[[T_x]]$, where $\widetilde{\I}\defeq\I\otimes_{\Z_p}\Z_p^\unr.$
\end{lemma}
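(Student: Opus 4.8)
The plan is to prove this by a Galois descent argument, exploiting that $\eta_\I$ is by construction $G_{\Q_p}$-invariant and that the Serre--Tate expansion map $\eqref{STD}$ is Galois equivariant. First I would identify $\widetilde{\I}$ with the inertia-invariant part of the coefficient ring: since $\I$ is flat (indeed finite free) over $\mathcal{O}$ and $\mathfrak{O}^{I_{\Q_p}}=\Z_p^\unr$ (the ramified elements of $\mathfrak{O}$, in particular the $p$-power roots of unity introduced through $\Lambda_\mathfrak{O}$ in $\eqref{mapmainthm}$, are moved by inertia), one has $\J^{I_{\Q_p}}=\I\otimes_{\mathcal{O}}\Z_p^\unr$, which is the image of $\widetilde{\I}$ in $\J$. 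Thus it suffices to show that every Serre--Tate coefficient of $\ST(\eta_\I)$ is fixed by the inertia subgroup $I_{\Q_p}$ acting on $\J[[T_x]]$ through the structure induced by $\ST$.

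Next I would make this induced action explicit. The map $\eqref{STD}$ factors through the $G_{\Q_p}$-equivariant isomorphism $\mathbf{T}_\J^+\simeq\mathfrak{B}_\J^*(1)$, the inclusion $\eqref{mapmainthm}$, and the Serre--Tate expansion $\eqref{STJ}$. The crucial geometric input is the transformation law of the Serre--Tate parameter: as $T_x=q_{\mathcal{A}}(x_\beta,x_\beta^\vee)-1$ with $q$ valued in $\widehat{\G}_m$ and $\beta$ a $\mu_{p^\infty}$-trivialization (cf. the construction preceding Definition~\ref{q-expansion}), an element $\sigma\in G_{\Q_p}$ sends $1+T_x\mapsto(1+T_x)^{\chi_\cyc(\sigma)}$, while acting on the coefficient ring $\J=\I\otimes_{\mathcal{O}}\mathfrak{O}$ through its action on $\mathfrak{O}$ and trivially on $\I$. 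The invariant differential $\omega_{\mathcal{A}_x^\univ}=\varphi_\beta^*(dT/T)=d\log(1+T_x)$ then picks up a factor $\chi_\cyc(\sigma)$, which is precisely the geometric incarnation of the Tate twist $(1)$ occurring in $\mathfrak{B}_\J^*(1)$ and in $\eqref{Ohta2.1.7}$; together with the inertia character $\chi_\cyc^{-1}\iota(\chi_\cyc)^{-1}$ recorded in Proposition~\ref{prop1.2.8} and \S\ref{HT}, these assemble into a single functional equation relating $\sigma\bigl(\ST(\eta_\I)\bigr)$ to $\ST(\eta_\I)$.

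I would then restrict this functional equation to $I_{\Q_p}$ and compare the coefficients of $T_x^n$. The whole content of the twist-matching is that the cyclotomic transformation of $T_x$ and of $d\log(1+T_x)$ cancels against the Tate twist and the inertia character, so that, order by order, the equation collapses to $\sigma(a_n)=a_n$ for all $\sigma\in I_{\Q_p}$; hence $a_n\in\J^{I_{\Q_p}}=\I\otimes_{\mathcal{O}}\Z_p^\unr$ and $\ST(\eta_\I)\in\widetilde{\I}[[T_x]]$. Here the invariance of $\eta_\I$ enters through the diagonal description $\mathbf{D}(\mathbf{T}^+)=(\mathbf{T}^+\widehat{\otimes}_{\Z_p}\Z_p^\unr)^{G_{\Q_p}}$, which pairs the unramified Galois action on $\mathbf{T}^+$ with the Frobenius on $\Z_p^\unr$, and thereby selects a coefficient in $\Z_p^\unr$ rather than in the full ramified ring $\mathfrak{O}$ (this is the place where \cite[Lemma 3.3]{Ochiai-Coleman} is used to guarantee that such a basis $\eta_\I$ exists).

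The hard part will be establishing the Galois equivariance of $\ST$ together with the exact bookkeeping of the three twists (the Tate twist $(1)$, the cyclotomic action on $T_x$, and the inertia character $\chi_\cyc^{-1}\iota(\chi_\cyc)^{-1}$). This is delicate precisely because, as flagged in Remarks~\ref{remohta} and \ref{remwiles}, the interaction of the diamond and Hecke actions with $T_x$-expansions is not transparent in the quaternionic setting. I expect to circumvent this exactly as elsewhere in the paper: by working in weight $2$, where $\mathfrak{S}_\infty^*$ is built from cotangent spaces of the ordinary $p$-divisible groups $\mathcal{G}_m^{*,0}$ (Propositions~\ref{multiplicativetype} and \ref{isoprop1}), and by transporting Ohta's corresponding computation over modular curves \cite{Ohta-ES,OhtaMA} (and its use in \cite{Castella}) through the Jacquet--Langlands correspondence, which is equivariant for the Hecke operators, the diamond operators, the Atkin--Lehner involution $\tau_m$ and the Galois action, as already exploited throughout \S3.
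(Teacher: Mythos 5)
Your overall strategy (Galois descent exploiting the $G_{\Q_p}$-invariance of $\eta_\I$ built into $\mathbf{D}(\mathbf{T}^+)$) is sound and is in fact the mechanism behind the paper's proof, but the engine you propose to run it with is broken: there is no inertia transformation law $1+T_x\mapsto (1+T_x)^{\chi_\cyc(\sigma)}$, and hence no ``functional equation'' whose twists need to cancel. The point $x$ lies in $\widehat{\mathrm{Ig}}(\overline{\F}_p)$, and the entire expansion apparatus --- the ring $\Z_p^\unr[[T_x]]=W(\overline{\F}_p)[[T_x]]$, the universal QM deformation $\mathcal{A}_x^\univ$ with its lifted level structures, and the differential $\omega_{\mathcal{A}_x^\univ}$ --- is defined over $\Z_p^\unr$, which $I_{\Q_p}$ fixes. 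So inertia acts trivially on $T_x$ and on $\omega_{\mathcal{A}_x^\univ}$, and the Serre--Tate expansion map is inertia-equivariant for the \emph{plain coefficient-wise} action on the target, with no exponentiation of $1+T_x$ and no extra factor on the differential. The formula you invoke belongs to genuinely different structures: the action of $\Z_p^\times$ by change of trivialization $\beta$ on the Igusa tower, or the Galois action on CM points of $p$-power conductor via Shimura reciprocity used in \S\ref{sec.padicLfunction}; neither is the local inertia action on $T_x$-expansions. Consequently your twist bookkeeping has nothing to cancel against: the Tate twist and the character $\chi_\cyc^{-1}\iota(\chi_\cyc)^{-1}$ of Propositions \ref{prop1.2.8} and \ref{isoprop1} live in the Galois/Hecke-module structure of the \emph{source}, where inertia acts on $\mathbf{T}_\J^+\simeq\mathfrak{B}_\J^*(1)$ by multiplication by the unit $\iota(\chi_\cyc(\sigma))^{-1}$ (a diamond operator, acting through its image in $\J^*$) composed with the coefficient action on $\mathfrak{O}$; invariance of $\eta_\I$ together with the $\J$-linearity of $\ST$ and the coefficient-wise equivariance just described gives $\sigma(a_n)=a_n$ for $\sigma\in I_{\Q_p}$ directly, with no order-by-order cancellation. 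Relatedly, the ``hard part'' you defer to a Jacquet--Langlands transport of Ohta's computations is not actually outstanding: the required equivariance of \eqref{Ohta2.1.7} and \eqref{mapmainthm} is built into their construction via Tate's theorem on $p$-divisible groups in \S\ref{HT}, and no statement about Hecke actions on $T_x$-expansions (the genuine difficulty flagged in Remark \ref{remohta}) is needed for this lemma.

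For contrast, the paper's proof avoids coefficient-level Galois bookkeeping altogether. Since $\eta_\I$ is $G_{\Q_p}$-invariant, its image under the first map of \eqref{STD} lies in $(\mathfrak{S}_\infty^*\otimes_{\Lambda_\mathcal{O}}\J^*)^{G_{\Q_p}}$, which is identified with the $\Z_p$-version of $\mathfrak{S}_\infty^*$ (Definition \ref{DEFINT} with $\mathcal{O}=\Z_p$, i.e.\ inverse systems of forms defined over $\Q_p$ with integral expansions) tensored over $\Lambda$ with $\I$; it then only remains to observe that any such form has its $T_x$-expansion in $\Z_p^\unr[[T_x]]$, simply because the universal test object is defined over $\Z_p^\unr[[T_x]]$ (base-change compatibility of Katz forms). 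If you delete your transformation law and replace it by this observation --- equivalently, by the triviality of the inertia action on the universal test object --- your inertia-descent argument becomes correct and is essentially the coefficient-wise version of the paper's argument.
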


\begin{proof}
  Since we are taking $G_{\Q_p}$-invariants, the target $(\mathfrak{S}_\infty^*\otimes_{\Lambda_\mathcal{O}}\J^*)^{G_{\Q_p}}$ of the first map 
  in \eqref{STD} is the tensor product (over $\Lambda$) with $\I$ of the 
  $\Z_p$-module $\mathfrak{S}_\infty^*$ as in Definition \ref{DEFINT} with $\mathcal{O}=\Z_p$, 
  \emph{i.e.} the $\Z_p$-submodule of
    $\invlim S_2(\Gamma_m,\Q_p)$ consisting of those $(f_m)_{m\geq 1}$ with $f_m$ 
    satisfying the condition $f_m|\tau_m| U_p^m\in S_2^\ST(\Gamma_m,\Z_p)$. Any such $f_m$ has Serre--Tate expansion in $\Z_p^\unr[[T_x]]$, and the result follows.  
\end{proof}

Thanks to Lemma \ref{lemmaI} we obtain an injective map of $\I$-modules   
\[\ST\colon\mathbf{D}(\mathbf{T}^+)\longmono \widetilde{\I}[[T_x]]\] 
and we may define the power series 
\begin{equation}\label{FIT}\mathcal{F}_\I(T_x)\defeq\ST(\eta_\I)\end{equation} in $\widetilde{\I}[[T_x]]$.}
We write this power series as $\mathcal{F}_\I(T_x)=\sum_{n\geq 0}a_nT_x^n$, and observe that it is well defined up to a unit in $\I^\times$ depending on our choice of $\eta_\I$. 
By specialization we obtain a power series $\mathcal{F}_\kappa(T_x)= \sum_{n\geq 0}a_n(\kappa)T_x^n$ in $\mathcal{O}_\kappa^\unr[[T_x]]$, where  $a_n(\kappa)=\kappa(a_n)$ and $\mathcal{O}_\kappa^\unr$ is the maximal unramified extension of $\mathcal{O}_\kappa$. 
Then $\mathcal{F}_\kappa(T_x)$ is the $T_x$-expansion of a unique modular form $f_\kappa\in S_k(\Gamma_m,F_\kappa)$ 
(so in fact $f_\kappa\in S^\ST_k(\Gamma_m,\mathcal{O}^\unr_\kappa)$). 
This fixes a choice of Jacquet--Langlands lift of the elliptic modular form corresponding to $\kappa$; as remarked above, this choice depends on the choice of $\eta_\I$ only.

\section{The $p$-adic $L$-function} \label{sec.padicLfunction}
We apply the machinery developed so far to interpolate in $p$-adic analytic families the $p$-adic $L$-functions for quaternionic modular forms twisted by Hecke characters constructed in \cite{Magrone, Mori1, Brooks}.
Fix $\I$ as in \S\ref{PBHF}. 

\subsection{CM points on Shimura curves} \label{CM section}
Following \cite{ChHs2}, \cite{ChHs1}, \cite{CH}, \cite{BCK},
\cite{CL},
we introduce a collection of Heegner points of $p$-power conductor. 

We first make the choice of the isomorphisms $i_\ell\colon B_\ell\simeq\M_2(\Q_\ell)$ more 
precise, following mainly \cite{ChHs1}.  
Let $K=\Q(\sqrt{-D_K})$ be an imaginary quadratic field, of discriminant $-D_K$ prime to $MDp$, such that all primes dividing $Mp$ are split in $K$ and all primes dividing $D$  are inert in $K$. 
Then $K$ splits $B$, and we may fix an embedding of $\Q$-algebras $\iota_K\colon K\hookrightarrow B$. 
We often identify $K$ with the subfield $\iota_K(K)\subseteq B$
via $\iota_K$, 
and therefore simply write $K\subseteq B$ for $\iota_K(K)\subseteq B$. 
Define $\delta=\sqrt{-D_K}$ and $\theta=\frac{D'+\delta}{2}$, where $D'=D_K$ if $2\nmid D_K$ and $D'=D_K/2$ if $2\mid D_K$. 
As in \cite[\S2.1]{ChHs1}, choose a $K$-basis of $B=K\oplus KJ$
so that $Jx=\bar{x}J$ for all $x\in \iota_K(K)$ 
and $J^2=\beta\in\Q^\times$ such that $\beta\in(\Z_\ell)^2$ for all $\ell\mid Mp$, 
and $\beta\in\Z_\ell$ for all $\ell\mid D_K$. Fix a square root $\sqrt{\beta}$ of $\beta$ 
in $\overline{\Q}$. Then $\sqrt{\beta}\in \Z_\ell$ for all primes $\ell\mid Mp$. 
For each $\ell\mid Mp$, 
we fix the isomorphism $i_\ell:B_\ell\simeq\M_2(\Q_\ell)$ by 
requiring that  
\begin{equation}\label{embeq}
i_\ell(\theta)=\mat {\mathrm{T}_{K/\Q}(\theta)}{-\mathrm{N}_{K/\Q}(\theta)}{1}{0}, 
\qquad i_\ell(J)=\sqrt{\beta}\mat{-1}{\mathrm{T}_{K/\Q}(\theta)}{0}{1}.\end{equation}
(Here for any field extension $L/F$, we denote by $\mathrm{T}_{L/F}$ and $\mathrm{N}_{L/F}$ the trace and norm maps.)
For each prime $\ell\nmid MDp$, choose isomorphisms $i_\ell\colon B_\ell\cong \M_2(\Q_\ell)$ such that $i_\ell(\mathcal{O}_{K,\ell})\subseteq \M_2(\Z_\ell)$, where $\mathcal{O}_{K,\ell}=\mathcal{O}_K\otimes_\Z\Z_\ell$. 
For each integer $m\geq 0$, let $R_{m}$ be the Eichler order of $B$ of level $Mp^m$ with respect to the chosen isomorphisms $i_\ell$ for all finite places $\ell\nmid D$. 

Define $N=MD$, fix an integer $c\geq 1$ with $(c,ND_Kp)=1$ 
and for each integer $n\geq 0$; let 
$\mathcal{O}_{cp^n}=\Z+cp^n\mathcal{O}_K$ be the order of $K$ of conductor $cp^n$. Class field theory gives an isomorphism 
$\Pic(\mathcal{O}_{cp^n})\cong \Gal(H_{cp^n}/K)$ for an abelian extension $H_{cp^n}$ of $K$, called the \emph{ring class field of $K$ of conductor $cp^n$}. 
Define the union of these fields 
$H_{cp^\infty}=\bigcup_{n\geq 1} H_{cp^n}.$
Since $c$ is prime to $p$, $H_{c}\cap H_{p^\infty}=H$, where $H=H_1$ is the Hilbert class field of $K$, 
so we have an isomorphism of groups 
\[\Gamma_\infty\defeq \Gal(H_{cp^\infty}/K)\cong  \Gal(H_c/K)\times\Gal(H_{p^\infty}/H).\]
Since $p$ is split in $K$, we have  $\Gal(H_{p^\infty}/H)\cong \Z_p^\times$; as usual we decompose $\Z_p^\times\cong \Delta\times\Gamma$, with $\Gamma=(1+p\Z_p)$ and $\Delta=(\Z/p\Z)^\times$.

We refer to \cite[Definition 3.1]{LV-MM} for the definition of Heegner points on $X_m$
in terms of optimal embeddings; to fix the notation, for $c\geq 1$ an integer prime to $pND_K$ 
if the point  
$x=[(f,g)]$ represents a Heegner point on $X_m$ 
then $f(\mathcal{O}_{cp^n})=f(K)\cap gU_mg^{-1}$; here $f\colon K\hookrightarrow B$ is viewed as a point in $\mathcal{H}^\pm$ by scalar extension to $\R$. Moreover, for $a\in\widehat{K}^\times$, 
by Shimura reciprocity law we have 
$x^\sigma=[(f,\hat{f}(a^{-1})g)]$
where $\hat{f}\colon\widehat{K}\rightarrow\widehat{B}$ is the adelization of $f$,
$\mathrm{rec}_K(a)=\sigma$, and $\mathrm{rec}_K$ is the geometrically normalized reciprocity map 
(\cite[Theorem 9.6]{shimura}). 

Let $c=c^+c^-$ with $c^+$ divisible by primes which are split in $K$ and $c^-$ divisible by primes which are inert in $K$. Choose decompositions $c^+=\mathfrak{c}^+\bar{\mathfrak{c}}^+$ and 
$M=\mathfrak{M}\overline{\mathfrak{M}}$ into cyclic quotient ideals of orders $c^+$ and $M$ respectively. 
For each prime $\ell$ and each integer $n\geq 0$, define 
\begin{itemize}
      \item $\xi_\ell=1$ if $\ell\nmid Mcp$.
           \item $\xi_{p}^{(n)}=\delta^{-1}\smallmat{\theta}{\bar\theta}{1}{1}\smallmat{p^n}{1}{0}{1}
           \in \GL_2(K_\mathfrak{p})=\GL_2(\Q_p)$. 
        \item $\xi_{\ell}=\delta^{-1}\smallmat{\theta}{\bar\theta}{1}{1}\smallmat{\ell^s}{1}{0}{1}\in \GL_2(K_\mathfrak{l})=\GL_2(\Q_\ell)$ if $\ell\mid c^+$ and $\ell^s$ is the exact power of $\ell$ dividing $c^+$, 
        where $(\ell)=\mathfrak{l}\bar{\mathfrak{l}}$ is a factorization into prime ideals in $\mathcal{O}_K$ and $\mathfrak{l}\mid \mathfrak{c}^+$. 
       \item $\xi_{\ell}=\smallmat{0}{-1}{1}{0}\smallmat{\ell^s}{0}{0}{1}\in \GL_2(\Q_\ell)$ if $\ell\mid c^-$ and $\ell^s$ is the exact power of $\ell$ dividing $c^-$.
         \item $\xi_\ell=\delta^{-1}\smallmat{\theta}{\bar\theta}{1}{1}\in \GL_2(K_\mathfrak{l})=\GL_2(\Q_\ell)$ if $\ell\mid M$, where $(\ell)=\mathfrak{l}\bar{\mathfrak{l}}$ is a factorization into prime ideals in $\mathcal{O}_K$ and $\mathfrak{l}\mid \mathfrak{M}$.
  \end{itemize}
We understand these elements $\xi^\star_\bullet$ as elements in $\widehat{B}^\times$ by implicitly using the isomorphisms $i_\ell$ defined before. 
With this convention, define $\xi^{(n)}=(\xi_\ell,\xi_{p}^{(n)})_{\ell\neq p}\in \widehat{B}^\times$.  
Define a map $x_{cp^m}\colon\Pic(\mathcal{O}_{cp^m})\rightarrow {X}_m(\C)$ by $[\mathfrak{a}]\mapsto[(\iota_K,a\xi^{(m)})]$, where
if $\mathfrak{a}$ is a representative of the ideal class $[\mathfrak{a}]$, then $a\in \widehat{K}^\times$ satisfies $\mathfrak{a}=a\widehat{\mathcal{O}}_{cp^m}\cap K$; here $a \in \widehat{K}^\times$ acts on $\xi^{(n)}\in \widehat{B}^\times$ via left multiplication by $\hat{\iota}_K(a)$. We often write $x_{cp^m}(a)$ or $x_{cp^m}(\mathfrak{a})$ 
for $x_{cp^m}([\mathfrak{a}])$.
One verifies (using the description of $i_\ell(\theta)$ in \eqref{embeq}) that $x_{cp^m}(a)$ are Heegner points of conductor $cp^m$ in $X_m(H_{cp^m}(\zeta_{p^m}))$, for all 
$a\in\Pic(\mathcal{O}_{cp^m})$, and all integers $m\geq0$. 
Each $x_{cp^m}(a)$ admits a smooth model over the ring of 
algebraic integers of $H_{cp^m}(\zeta_{p^m})$, and thus defines 
a point in $\widehat{\mathrm{Ig}}$, which we denote
$x(\mathfrak{a})$. 

\subsection{$p$-adic $L$-functions of modular forms}\label{sec-padic-Lfunctions} 
Write $p=\mathfrak{p}\bar{\mathfrak{p}}$ in $\mathcal{O}_K$ 
and let $\mathfrak{p}$ be the prime ideal corresponding to the fixed embedding $\iota_p\colon\bar\Q\hookrightarrow\bar\Q_p$. For an element $x\in\A_K^\times$, let $x_\mathfrak{p}\in K_p^\times$ and $x_{\bar{\mathfrak{p}}}\in K_p^\times$ denote the components of $x$ at $\mathfrak{p}$ and $\bar{\mathfrak{p}}$, respectively.
For an algebraic Hecke character 
$\xi\colon K^\times\backslash\A_K^\times\rightarrow\C^\times$, write $\xi=\xi_\mathrm{fin}\xi_\infty$ with $\xi_\mathrm{fin}\colon\widehat{K}^\times\rightarrow\C^\times$ and $\xi_\infty\colon K_\infty^\times\rightarrow\C^\times$ the finite and infinite restrictions of $\xi$ respectively, 
where $\widehat{K}^\times$ and $K_\infty^\times$ are respectively the groups of finite and infinite ideles. We say that $\xi\colon K^\times\backslash\A_K^\times\rightarrow\C^\times$ has infinity type 
$(m,n)$ if $\xi_\infty(x)=x_\infty^m\bar x_\infty^n$, and in this case denote by $\hat\xi\colon K^\times\backslash\widehat{K}^\times\rightarrow \overline\Q_p^\times$ its $p$-adic avatar, defined by 
\[\hat\xi(x)=(\iota_p\circ\iota_\infty^{-1})(\xi_\mathrm{fin}(x))x_{\mathfrak{p}}^mx_{\bar{\mathfrak{p}}}^n\] where recall that $\iota_\infty\colon\overline{\Q}\hookrightarrow \C$ is 
the fixed embedding. 
To simplify the notation, we sometimes write 
$\hat{\xi}(x)=\xi(x)x_\mathfrak{p}^mx_{\bar{\mathfrak{p}}}^n$
for $x\in\widehat{K}^\times$, 
understanding that $\xi(x)$ 
denotes $(\iota_p\circ\iota_\infty^{-1})(\xi_\mathrm{fin}(x))$.
If $\xi$ is a Hecke character of conductor $\mathfrak{c}\subseteq \mathcal{O}_K$ and $\mathfrak{b}$ is an ideal prime to $\mathfrak{c}$, we write $\xi(\mathfrak{b})$ for $\xi(b)$, where $b \in \widehat{K}^\times$ is a finite idele with trivial components at the primes dividing $\mathfrak{c}$ and such that $b  \widehat{\mathcal{O}}_K\cap K = \mathfrak{b}$. 

Let $c\mathcal{O}_K$ be the prime to $p$ part of the conductor $\mathfrak{c}$ of $\xi$ and recall the CM points $x(\mathfrak{a})$, defined in \S\ref{CM section}, for each $\mathfrak{a} \in \Pic(\mathcal{O}_c)$.
Since $x(\mathfrak{a})$ has a model over $\mathcal{V}=\Z_p^\unr\cap K^{\mathrm{ab}}$, where $K^\mathrm{ab}$ is the maximal abelian extension of $K$, we can consider the reduction $\bar{x}(\mathfrak{a})=x(\mathfrak{a}) \otimes_\mathcal{V} \overline{\F}_p$ of $x(\mathfrak{a})$ modulo $p$.

Fix an anticyclotomic Hecke character $\xi$ of infinity type $(k/2,-k/2)$ 
and $f\in S_k(\Gamma_m,F)$, where $F$ is a finite unramified extension of $\Q_p$.
{Suppose that for any $\mathfrak{a}$ as above, $f\in S_k^\ST(\Gamma_m,\mathcal{O})$, where 
the Serre--Tate expansion is computed with respect to $x(\mathfrak{a})$ and $\mathcal{O}$ is the 
valuation ring of $F$.} 
Then $f^{[p]}(T_{x(\mathfrak{a})})\in \mathcal{O}[[T_{x(\mathfrak{a})}]]$ for all $\mathfrak{a}$ by \cite[Proposition 4.7]{Brooks}.
Let $\mathcal{O}(\xi)$ denote the extension of $\mathcal{O}$ generated by the values of $\hat\xi$.
We recall the construction of the $p$-adic $L$-function associated with $f$ and $\xi$ in \cite[\S 4.6]{Magrone} (see also \cite[\S4]{BCK}). 
For each ideal class $[\mathfrak{a}] \in \Pic(\mathcal{O}_c)$ with $\mathfrak{a}\subseteq\mathcal{O}_c$, 
let $\mathrm{N}(\mathfrak{a})=c^{-1}\sharp(\mathcal{O}_c/\mathfrak{a})$ and 
define the measure $\mu_{f,\mathfrak{a}}$ on $\Z_p$ by the integration formula 
	\begin{equation}\label{def-p-adic-L-function}
\int_{\Z_p} (T_{x(\mathfrak{a})}+1)^x d\mu_{f,\mathfrak{a}}(x)=f^{[p]}_{\mathfrak{a}}(T_{x(\mathfrak{a})})\defeq {f}^{[p]}\left((T_{x(\mathfrak{a})}+1)^{\mathrm{N}(\mathfrak{a})^{-1}\sqrt{-D_K}^{-1}}\right)
	\end{equation}
The measures $\mu_{f,\mathfrak{a}}$ are supported on $\Z_p^{\times}$ (\cite[Remark 4.2]{Magrone}). 

Recall that $\mathrm{rec}_K\colon K^\times\backslash\widehat{K}^\times\rightarrow\Gal(K^\mathrm{ab}/K)$ denotes the geometrically normalized reciprocity map; let 
$\mathrm{rec}_{K,\mathfrak{p}}\colon K_\mathfrak{p}^\times\rightarrow\Gal(K_\mathfrak{p}^\mathrm{ab}/K_\mathfrak{p})$ be the local reciprocity map, and view 
$\Gal(K_\mathfrak{p}^\mathrm{ab}/K_\mathfrak{p})$ 
as a subgroup of $\Gal(K^\mathrm{ab}/K)$ by the fixed
embedding $\overline\Q\hookrightarrow\overline\Q_p$. 
Define ${\Gamma}_\infty=\Gal(H_{cp^\infty}/K)$ and let $\mathcal{O}_{\C_p}$ be the valuation ring of the completion $\C_p$ of $\overline\Q_p$.  
For each 
continuous function $\varphi\colon\Gamma_\infty\rightarrow \mathcal{O}_{\C_p}$, and each ideal class $[\mathfrak{a}]\in \Pic(\mathcal{O}_c)$, define the continuous 
function 
$\varphi \big| [\mathfrak{a}]\colon\Z_p^\times\rightarrow\mathcal{O}_{\C_p}$
by the formula 
\[\varphi|[\mathfrak{a}](u) = \varphi\left(\mathrm{rec}_K(a)\mathrm{rec}_{K,\mathfrak{p}}(u)\right),\]
 where $a \in \widehat{K}^\times$ is a finite idele with trivial components at the primes lying over $cp$ such that $a\widehat{\mathcal{O}}_c \cap K=\mathfrak{a}$ and we view an element $u \in \Z_p^\times$ 
 as an element in $K_\mathfrak{p}^\times$ via the canonical inclusion 
 $\Z_p^\times\subseteq K_\mathfrak{p}^\times$. 
 
Define an $\mathcal{O}(\xi)$-valued measure $\mathscr{L}_{f,\xi}$ on $\Gamma_\infty$ by 
the formula
	\begin{equation*} \label{L-f}
	\mathscr{L}_{f,\xi} (\varphi) = \sum_{\mathfrak{a} \in \Pic(\mathcal O_{c})} \xi(\mathfrak{a})\mathrm{N}(\mathfrak{a})^{-k/2}
	\int_{\Z_p^{\times}} \xi_{\mathfrak{p}}(u) (\varphi \big|[\mathfrak{a}])(u)d\mu_{f,\mathfrak{a}}(u)
	\end{equation*}
	for any continuous function $\varphi\colon\Gamma_\infty\rightarrow \mathcal{O}_{\C_p}$, where, as before,  
	$\xi(\mathfrak{a})$ denotes $\xi(a)$ for $a \in \widehat{K}^\times$ finite idele with trivial components at the primes dividing the conductor of $\xi$ and such that $a \widehat{\mathcal{O}}_c\cap K = \mathfrak{a}$. 

We now consider the case of $p$-stabilizations.

\begin{lemma}\label{lemma equality}\label{lemma5.1}Suppose that $f\in S_k^\ST(\Gamma_1,\mathcal{O})$ is the ordinary $p$-stabilization of $f^\sharp\in S_k(\Gamma_0,K)$. Then 
$\mathscr{L}_{f,\xi} = \mathscr{L}_{f^\sharp,\xi} $. 
\end{lemma}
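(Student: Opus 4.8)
The plan is to exploit the fact that the entire construction of $\mathscr{L}_{f,\xi}$ factors through the $p$-depletion $f^{[p]}$, and to combine this with the identity $f^{[p]}=(f^\sharp)^{[p]}$ already recorded in \S\ref{stabilization}. First I would isolate how $f$ enters the definition. The measure $\mu_{f,\mathfrak{a}}$ attached to an ideal class $[\mathfrak{a}]\in\Pic(\mathcal{O}_c)$ is determined by \eqref{def-p-adic-L-function}, in which $f$ appears \emph{only} through the power series $f^{[p]}_{\mathfrak{a}}(T_{x(\mathfrak{a})})=f^{[p]}\bigl((T_{x(\mathfrak{a})}+1)^{\mathrm{N}(\mathfrak{a})^{-1}\sqrt{-D_K}^{-1}}\bigr)$, i.e. through the Serre--Tate expansion of $f^{[p]}$ at the CM point $x(\mathfrak{a})$. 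In turn $\mathscr{L}_{f,\xi}(\varphi)$ is assembled from the $\mu_{f,\mathfrak{a}}$ by summing against $\xi(\mathfrak{a})\mathrm{N}(\mathfrak{a})^{-k/2}$ and integrating $\xi_{\mathfrak{p}}(u)\,(\varphi|[\mathfrak{a}])(u)$; every remaining ingredient (the character $\xi$, the norms $\mathrm{N}(\mathfrak{a})$, the twisted functions $\varphi|[\mathfrak{a}]$, the CM points $x(\mathfrak{a})$ and the reciprocity maps) is independent of the choice between $f$ and $f^\sharp$. Hence $\mathscr{L}_{f,\xi}$ depends on $f$ solely through the family of expansions $\{f^{[p]}(T_{x(\mathfrak{a})})\}_{\mathfrak{a}}$.

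Next I would invoke the identity of $p$-depletions. Since $f=f^\sharp-\beta f^\sharp|v_p$ and $f^{[p]}=f(1-U_pv_p)$, expanding and using that $v_pU_p=\id$ gives $f^{[p]}=f^\sharp(1-U_pv_p)=(f^\sharp)^{[p]}$, which is exactly the computation of \S\ref{stabilization}. This is an equality of modular forms in $S_k(\Gamma_1)$, and the Serre--Tate expansion of Definition \ref{q-expansion} is $\mathcal{O}$-linear and compatible with base change; evaluating it at each $x(\mathfrak{a})$ therefore yields $f^{[p]}(T_{x(\mathfrak{a})})=(f^\sharp)^{[p]}(T_{x(\mathfrak{a})})$ in $\mathcal{O}[[T_{x(\mathfrak{a})}]]$. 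Consequently $\mu_{f,\mathfrak{a}}=\mu_{f^\sharp,\mathfrak{a}}$ for every $[\mathfrak{a}]$, and substituting this into the defining formula for $\mathscr{L}$ gives $\mathscr{L}_{f,\xi}(\varphi)=\mathscr{L}_{f^\sharp,\xi}(\varphi)$ for all continuous $\varphi\colon\Gamma_\infty\to\mathcal{O}_{\C_p}$, that is $\mathscr{L}_{f,\xi}=\mathscr{L}_{f^\sharp,\xi}$.

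The argument is essentially formal once the reduction to $f^{[p]}$ is made, so I do not expect a serious obstacle; the only point that requires care is checking that $\mathscr{L}_{f^\sharp,\xi}$ is even well defined, i.e. that $(f^\sharp)^{[p]}$ has integral Serre--Tate expansions at all the points $x(\mathfrak{a})$. This is not immediate from $f^\sharp\in S_k(\Gamma_0,K)$ by itself, but it follows at once from the equality $(f^\sharp)^{[p]}=f^{[p]}$ together with the hypothesis $f\in S_k^\ST(\Gamma_1,\mathcal{O})$ (at each $x(\mathfrak{a})$) and \cite[Proposition 4.7]{Brooks}, which guarantees $f^{[p]}(T_{x(\mathfrak{a})})\in\mathcal{O}[[T_{x(\mathfrak{a})}]]$. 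Thus the well-definedness of both sides and their equality are simultaneously settled by the single identity $f^{[p]}=(f^\sharp)^{[p]}$.
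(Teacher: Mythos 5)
Your proof is correct and takes essentially the same route as the paper's: both reduce everything to the identity $f^{[p]}=(f^\sharp)^{[p]}$ from \S\ref{stabilization}, note that the measures $\mu_{f,\mathfrak{a}}$ in \eqref{def-p-adic-L-function} depend on $f$ only through the Serre--Tate expansions of its $p$-depletion at the points $x(\mathfrak{a})$, and conclude. Your closing observation that this single identity also settles the well-definedness of $\mathscr{L}_{f^\sharp,\xi}$ is a worthwhile point that the paper leaves implicit.
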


\begin{proof} The $p$-depletions of $f$ and $f^\sharp$ are the same by the results recalled in \S\ref{stabilization}; therefore, taking the 
Serre--Tate power series expansion we have 
${f}^{[p]}_\mathfrak{a}(T_x)={f^\sharp}^{[p]}_\mathfrak{a}(T_x)$ for each ideal class 
$[\mathfrak{a}]$ in $\Pic(\mathcal{O}_c)$. 
In light of \eqref{def-p-adic-L-function}, this concludes the proof. 

\end{proof}}

\subsection{Families of Hecke characters}\label{characters}\label{sec-famHecke}
Let $G_\Q=\Gal(\overline\Q/\Q)$ and $\chi_\cyc\colon G_\Q\rightarrow\Z_p^\times$ be the cyclotomic character. 
Factor $\chi_\cyc$ as $\chi_\cyc=\chi_\mathrm{tame}\cdot\chi_\mathrm{wild}$, where 
$\chi_\mathrm{tame}\colon G_\Q\rightarrow\mu_{p-1}$ takes values in the group of $p-1$ roots of unity in $\Z_p^\times$ via the Teichmüller character and $\chi_\mathrm{wild}\colon G_\Q\rightarrow\Gamma$ takes values in the group of principal units $\Gamma=1+p\Z_p$; in other words, if we write an element $x\in\Z_p^\times$ as $x=\zeta_x\cdot\langle x\rangle$, where $\zeta_x=\omega(x)\in\mu_{p-1}$ and $x\mapsto\langle x\rangle$ is the projection $\Z_p^\times\rightarrow\Gamma$ from $\Z_p^\times$ to the 
group $\Gamma$ of principal units, then $\chi_\mathrm{tame}(\sigma)=\zeta_{\chi_\cyc(\sigma)}$ and $\chi_{\mathrm{wild}}(\sigma)=\langle\chi_\cyc(\sigma)\rangle$. 
We also denote by
$\boldsymbol{\chi}_{\cyc}\colon\Q^\times\backslash\A_\Q^\times\rightarrow\Z_p^\times$,  $\boldsymbol{\chi}_\mathrm{tame}\colon\Q^\times\backslash\A_\Q^\times\rightarrow\mu_{p-1}$ and $\boldsymbol{\chi}_\mathrm{wild}\colon\Q^\times\backslash\A_\Q^\times\rightarrow\Gamma$ the composition of $\chi_\cyc$, 
$\chi_\mathrm{tame}$ and $\chi_\mathrm{wild}$, respectively, with the reciprocity map $\mathrm{rec}_\Q$; we then have 
$\boldsymbol{\chi}_\mathrm{tame}(x)=\zeta_{\boldsymbol{\chi}_\cyc(x)}$ and $\boldsymbol{\chi}_{\mathrm{wild}}(x)=\langle\boldsymbol{\chi}_\cyc(x)\rangle$.

Write $z\mapsto [z]$ for both inclusions of group-like elements 
$\Z_p^\times\hookrightarrow\Z_p[[\Z_p^\times]]^\times$ and 
$\Gamma\hookrightarrow\Lambda^\times$.
Recall the critical character $\Theta\colon G_\Q\rightarrow\Lambda^\times$ defined in \cite[Definition 2.1.3]{Howard-Inv} by 
\[\Theta(\sigma)=\chi_\mathrm{tame}^{\frac{k_0-2}{2}}(\sigma)\cdot[\chi_{\wild}^{1/2}(\sigma)],\] where 
$x\mapsto x^{1/2}$ is the unique square root of $x\in\Gamma$ and $k_0$ is a fixed even integer such that 
$k_0\equiv 2\pmod{2(p-1)}$. 
We still write $\Theta\colon G_\Q\rightarrow\I^\times$ for the composition of $\Theta$ with the canonical inclusion $\Lambda\hookrightarrow\I$.
Write $\boldsymbol{\theta}\colon\Q^\times\backslash\A_\Q^\times\rightarrow\I^\times$ for the composition of $\Theta$ with the geometrically normalized reciprocity map $\mathrm{rec}_\Q.$
We denote $\Q_p^\cyc=\Q(\zeta_{p^\infty})=\bigcup_{n\geq 1}\Q(\zeta_{p^n})$ 
the $p$-cyclotomic extension of $\Q$, and define $G_\infty^\cyc=\Gal(\Q(\zeta_{p^\infty})/\Q)$. The cyclotomic character induces an isomorphism 
$\chi_\cyc\colon G_\infty^\cyc\overset\sim\rightarrow\Z_p^\times$. 
Since $\Theta$ factors through $G_\infty^\cyc$, 
precomposing it with the inverse of the 
cyclotomic character, we obtain a character of $\Z_p^\times$ which we denote with $\boldsymbol{\vartheta}\colon\Z_p^\times\rightarrow\I^\times$. 
If $\kappa \colon\I\rightarrow\overline\Q_p$ is an arithmetic morphism  of signature $(k,\varepsilon )$ we put ${\theta}_\kappa =\kappa \circ\boldsymbol{\theta}$ 
and ${\vartheta}_\kappa =\kappa \circ\boldsymbol{\vartheta}$. For any $x\in\Z_p^\times$,
if $k \equiv k_0\mod{2(p-1)}$, then we have
\[{\vartheta}_\kappa (x)
=\varepsilon ^{1/2}(\langle x\rangle)\cdot  x^{\frac{k -2}{2}}.
\] 
 
Denote by $\mathbf{N}_{K/\Q}\colon\A_K^\times\rightarrow\A_\Q^\times$ the adelic norm map, by $\mathbf{N}_\Q\colon\A^\times_\Q\rightarrow\Q^\times$ the adelic absolute value and let 
$\mathbf{N}_K\colon\A^\times_K\rightarrow\Q^\times$ denote the composition $\mathbf{N}_K=\mathbf{N}_\Q\circ\mathbf{N}_{K/\Q}$.
Define 
the 
character ${\boldsymbol{\chi}}\colon K^\times\backslash\widehat{K}^\times\rightarrow\I^\times$  
by ${\boldsymbol{\chi}}=\boldsymbol{\theta}\circ \mathbf{N}_{K/\Q}.$
For an arithmetic morphism  $\kappa \colon \mathbb{I}\rightarrow\mathcal{O}_\kappa$ of signature $(k,\varepsilon)$ define 
$\hat{\chi}_\kappa =\kappa \circ\boldsymbol{\chi}$.
Since $\chi_\cyc \circ \mathrm{rec}_\Q$ is the $p$-adic avatar of the adelic absolute value $\boldsymbol{\mathrm{N}}_\Q\colon\A_\Q^\times\rightarrow\Q^\times$, we obtain, for $x\in\widehat{K}^\times$ and $k \equiv k_0\mod{2(p-1)}$, 
\begin{equation}\label{chi_k}
\hat\chi_\kappa (x)= 
\varepsilon ^{-1/2}(\langle \mathbf{N}_K(x)x_\mathfrak{p}  x_{\bar{\mathfrak{p}}}\rangle)\cdot
(\mathbf{N}_K(x)x_\mathfrak{p} x_{\bar{\mathfrak{p}}})^{-\frac{k -2}{2}}.
\end{equation}

Let $\lambda\colon K^\times\backslash\A_K^\times\rightarrow\C^\times$ 
be a Hecke character of infinity type $(1,0)$, unramified at $p$ and whose $p$-adic avatar $\hat\lambda\colon K^\times\backslash\widehat{K}^\times\rightarrow{\overline\Q_p^\times}$ takes values in $\mathcal{O}^\times$. Denote now by $\bar\lambda$ the complex conjugate character 
of $\lambda$, defined by $x\mapsto\lambda(\bar{x})$, where $x\mapsto \bar x$ is given by the complex conjugation on $K$. 
Then $\bar\lambda$ has infinity type $(0,1)$ and the $p$-adic avatar 
of $\lambda\bar\lambda$ is equal to the product $\chi_\lambda\cdot\chi_{\cyc,K}$ where  
$\chi_{\cyc,K}=\chi_\cyc\circ \rec_\Q \circ \mathbf{N}_{K/\Q}$ and $\chi_\lambda$ is a finite order character unramified at $p$. 

Let $F$ be the field of fractions of $\mathcal{O}$ and note that $\mathcal{O}^\times \cong {\mu}(F) \times \Z_p^ d$, as topological groups,  where $d = [F:\Q_p]$ and $\mu(F)$ is the group of roots of unity in $F$. Therefore each element $x \in \mathcal{O^\times}$ can be written uniquely as a product $\zeta_x\cdot\langle x \rangle$, where $\zeta_x$ is the projection of $x$ in ${\mu}(F)$ and $\langle x \rangle$ is the projection on $\Z_p^d$. Let $\mathcal{O}^\times_{\text{free}} \cong \Z_p^d$ be the maximal $\Z_p$-free quotient of $\mathcal{O}^\times$ and denote by $\langle - \rangle$ the projection $\mathcal{O}^\times \twoheadrightarrow \mathcal{O}^\times_{\text{free}}$. Let $W=\langle \im \hat\lambda \rangle$ be the projection of the image of $\hat{\lambda}$ in $\mathcal{O}_{\text{free}}^\times$. If $\lambda$ has conductor $\mathfrak{c}$ prime to $p$, then $\hat{\lambda}$ factors through $\Gal(K({\p^\infty\mathfrak{c}})/K)$, where $K({\p^\infty\mathfrak{c}})=\bigcup_{n\geq 1}K({\p^n\mathfrak{c}})$ and $K({\p^n\mathfrak{c}})$ is the ray class field of $K$ of conductor $\mathfrak{p}^n\mathfrak{c}$. Since $\mathcal{O}^\times_{\text{free}}$ is a free pro-$p$ group, the composition $\langle \hat\lambda \rangle$ of $\hat{\lambda}$ with the projection $\langle - \rangle$ factorizes through the maximal free pro-$p$ quotient of $\Gal(K({\p^\infty \mathfrak{c}})/K)$ which is a cyclic pro-$p$ group isomorphic to  $\Z_p^2$. 
Since $\lambda$ has infinity type $(1,0)$, $W$ is isomorphic to a subgroup of 
the maximal free pro-$p$ quotient of $\Gal(K({\p^\infty \mathfrak{c}})/K)$ which is isomorphic to 
$\Z_p$. Since $W$ contains $\Gamma$, then we can see $\Gamma$ as a subgroup of $W$ of finite index, \emph{cf}. \cite[\S2.7]{Castella}, \cite[pp. 64--65]{HidaHF}. Write $p^m = [W : \Gamma]$. Let $w \in W$ be a topological generator of $W$ such that $\gamma=w^{p^m}$ is a topological generator of $\Gamma$. 


We now define a character 
${\boldsymbol{\lambda}}\colon K^\times\backslash\widehat{K}^\times\rightarrow\mathcal{O} [[W^\times]]$
by the formula 
$
{\boldsymbol{\lambda}}(x)=\hat{\lambda}(x)[\langle\hat{\lambda}(x)\rangle^{1/2}],$
where we view $\hat{\lambda}(x) \in \mathcal{O}^\times \hookrightarrow \mathcal{O} [[W^\times]]$ via the map $a\mapsto a\cdot 1_{W}$, with $1_W$ the identity element of $W$, $\langle \hat{\lambda}(x) \rangle$ denote the projection of $\hat{\lambda}(x)$ in $W$ and $z\mapsto [z]$ is the inclusion of group-like elements 
$W\hookrightarrow\mathcal{O}[[W]]^\times$.
Denote by $x \mapsto \hat{\lambda}(\bar{x})^{-1}=\lambda(\bar{x})^{-1}x_{\bar{\mathfrak{p}}}^{-1}$ (for $x \in \widehat{K}^{\times}$) the $p$-adic avatar of the Hecke character given by $x \mapsto \lambda(\bar{x})^{-1}$ (for $x \in \A^\times_K$) of infinity type $(0,-1)$. Then define the character
\[
{\boldsymbol{\lambda}}^{-1}(\bar{x})=\hat{\lambda}(\bar{x})^{-1}[\langle\hat{\lambda}(\bar{x})^{-1}\rangle^{1/2}]\]
taking values in $\mathcal{O}[[W]]^\times$. 
Finally, define the character ${\boldsymbol{\xi}}\colon K^\times\backslash\widehat{K}^\times\rightarrow \mathcal{O}[[W]]^\times$ given by
$
{\boldsymbol{\xi}}(x)={\boldsymbol{\lambda}}(x)\cdot{\boldsymbol{\lambda}}^{-1}(\bar x).$
Note that ${\boldsymbol{\xi}}_{|\widehat{\Q}^\times}$ is trivial. 

Let $\kappa\colon\I\rightarrow F_\kappa$ be an arithmetic morphism of signature $(k ,\varepsilon)$ 
and write $\hat{\lambda}_\kappa =\kappa \circ{\boldsymbol{\lambda}}$. 
Assume that $\mathcal{O}\pwseries W$ is contained in $\I$, and view $\boldsymbol{\lambda}$ as an $\I$-valued character ${\boldsymbol{\lambda}}\colon K^\times\backslash\widehat{K}^\times\rightarrow\I^\times$.
Then, for $x \in \widehat{K}^\times$ and $k \equiv k_0\mod{2(p-1)}$, we have 
\[\hat{\lambda}_\kappa (x)=\varepsilon ^{1/2}(\langle \hat{\lambda}(x)\rangle)\cdot {\lambda}(x) ^{k /2} x_{\mathfrak{p}}^{k/2}.\]
Hence $\hat{\lambda}_\kappa$ is the $p$-adic avatar 
of an algebraic Hecke character $\lambda_\kappa $ of infinity type $(k/2,0)$. Set as before $\hat{\xi}_\kappa =\kappa \circ {\boldsymbol{\xi}}$. For any $x \in \widehat{K}^\times$, we have 
\begin{equation}\label{hatxi}
\hat{\xi}_\kappa(x)= \varepsilon ^{1/2}(\langle{\lambda}(x
\bar{x}^{-1})x_\mathfrak{p} x^{-1}_{\bar{\mathfrak{p}}}\rangle)\cdot {\lambda}(x\bar{x}^{-1}) ^{k /2} \cdot x_{\mathfrak{p}}^{k/2} x_{\bar{\mathfrak{p}}}^{-k/2}.\end{equation}
Therefore, 
$\hat{\xi}_\kappa $ is the $p$-adic avatar of an anticyclotomic Hecke character $\xi_\kappa $ of infinity type $(k /2, -k /2)$. 

\subsection{$p$-adic $L$-functions for families of modular forms}
Let, as in \S\ref{subsec.STexp}, $\eta_\mathbb{I}$ be a fixed $\mathbb{I}$-generator of $\mathbf{D}(\mathbf{T}^+)$. 
Let 
$\mathcal{F}_\I(T_x)$ be the associated power series in $\widetilde{\I}[[T_{x(\mathfrak{a})}]]$  as in \eqref{FIT}. 
Let $c\mathcal{O}_K$, with $c\geq 1$ and $(c,pND_K)=1$ be the conductor of $x \mapsto\boldsymbol{\lambda}(x)\boldsymbol{\lambda}^{-1}(\bar{x})$.
Consider the CM points $x(\mathfrak{a})$ with $\mathfrak{a} \in \Pic(\mathcal{O}_c)$, defined in \S \ref{CM section}.
Recall that $x(\mathfrak{a})$ has a model defined over ${\Z_p^\unr}$, and define 
the fiber product $x(\mathfrak{a})_{{\I}} \defeq x(\mathfrak{a}) \otimes_{\Z_p^\unr}\widetilde{\I}.$
Define now a $\widetilde{\I}$-valued measure $\mu_{\eta_\I,\mathfrak{a}}$ on $\Z_p$ by
\begin{equation*} \label{mu_a}
\int_{\Z_p} (T_{x(\mathfrak{a})}+1)^x d\mu_{\eta_\I,\mathfrak{a}}(x) = 
{\mathcal{F}}_\I^{[p]} \left( (T_{x(\mathfrak{a})}+1)^{\mathrm{N}(\mathfrak{a}^{-1})\sqrt{(-D_K)}^{-1}}\right) \in \widetilde{\I}[[T_{x(\mathfrak{a})}]],
\end{equation*}
where as before 
$\mathcal{F}_\I^{[p]}(T_x)=\sum_{p\nmid n}a_nT_{x(\mathfrak{a})}^n$ is the $p$-depletion of $\mathcal{F}_\I(T_x)= \sum_{n\geq 0}a_nT_{x(\mathfrak{a})}^n$. 

\begin{definition} \label{Lfct}
The $p$-adic $L$-function associated with $\eta_\mathbb{I}$ and $\boldsymbol{\xi}$ is  the $\widetilde{\I}$-valued measure on the Galois group 
$\Gamma_\infty=\Gal(H_{cp^{\infty}}/K)$ given on any continuous function $\varphi\colon\Gamma_\infty\rightarrow \widetilde{\I}$ by 
\[
\mathscr{L}_{\I,\boldsymbol{\xi}} (\varphi) = \sum_{\mathfrak{a} \in \Pic(\mathcal O_{c})}
{\boldsymbol{\chi}}^{-1}
{\boldsymbol{\xi}}(\mathfrak{a})
\mathrm{N}(\mathfrak{a})^{-1}
	\int_{\Z_p^{\times}} 
	(\varphi \big|[\mathfrak{a}])(u)d\mu_{\eta_\I,\mathfrak{a}}(u).
	\]
\end{definition}

Clearly, $\mathscr{L}_{\I,\boldsymbol{\xi}}$ is well defined up to multiplication by an element in $\I^\times$. 
We may view 
$\mathscr{L}_{\I,\boldsymbol{\xi}} $ as a power series in $\widetilde{\I}[[{\Gamma_\infty}]]$. 
For any continuous character $\varphi\colon\Gamma_\infty\rightarrow\overline\Q_p^\times$ and any arithmetic morphism $\kappa \colon\I\rightarrow \mathcal{O}_\kappa $ we write 
$\mathscr{L}_{\I,\boldsymbol\xi}^{\mathrm{an}}(\kappa )=\kappa\circ\mathscr{L}_{\I,\boldsymbol\xi}^{\mathrm{an}}$ and $\mathscr{L}_{\I,\boldsymbol\xi}^{\mathrm{an}}(\kappa,\varphi)=\mathscr{L}_{\I,\boldsymbol\xi}^{\mathrm{an}}(\kappa )(\varphi)$.

\subsection{Interpolation}\label{sec5.4}  
The following result generalizes \cite[Theorem 2.11]{Castella} to the quaternionic setting. Fix 
$\lambda\colon K^\times\backslash\A_K^\times\rightarrow\overline\Q^\times$ an algebraic Hecke character of infinity type $(1,0)$ of conductor $\mathfrak{c}\subseteq \mathcal{O}_K$ prime to $pND_K$ whose $p$-adic avatar takes values in $\mathcal{O}^\times$ and let  $\boldsymbol{\xi}$ be defined from $\lambda$ as explained in \S\ref{characters}. Let $\I$ be a primitive Hida family as in \S\ref{sec-famHecke}, 
fix a generator $\eta_\I$ as before and let $\mathcal{F}_\I(T_x)\in\widetilde{\I}[[T_x]]$ be the associated power series. For any arithmetic morphism $\kappa\colon \I\rightarrow\mathcal{O}_\kappa$, of level $\Gamma_m$ and signature $(k,\epsilon_\mathrm{triv})$, where $\epsilon_\mathrm{triv}$ is the trivial character  
and $k \equiv k_0\equiv 2 \mod 2(p-1)$, 
let $f_\kappa$ be the unique modular form in $S_k(\Gamma_m,F_\kappa)$ whose power series expansion coincides with $\mathcal{F}_\kappa(T_x)$. 

\begin{theorem}\label{mainthh}
Let $\kappa\colon\I\rightarrow\mathcal{O}_\kappa$ be an arithmetic morphism of weight $k \equiv k_0 \mod 2(p-1)$. 
Then 
$\mathscr{L}_{\I,\boldsymbol\xi}^{\mathrm{an}}(\kappa )=
\vartheta^{-1}_\kappa (c) \mathscr{L}_{f_\kappa ,\xi_\kappa }$.
\end{theorem}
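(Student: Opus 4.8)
The plan is to prove the identity by unwinding both $p$-adic $L$-functions into their defining sums over $\Pic(\mathcal{O}_c)$ and comparing them term by term after applying the arithmetic specialization $\kappa$. Since $\mathscr{L}^{\mathrm{an}}_{\I,\boldsymbol\xi}(\kappa)=\kappa\circ\mathscr{L}_{\I,\boldsymbol\xi}$ and $\kappa$ is a continuous ring homomorphism, it commutes with the finite sum over ideal classes and with $p$-adic integration of $\overline{\Q}_p$-valued functions; thus it suffices to control (i) the effect of $\kappa$ on the measures $\mu_{\eta_\I,\mathfrak{a}}$ and (ii) its effect on the character factors $\boldsymbol{\chi}^{-1}\boldsymbol{\xi}(\mathfrak{a})$, then match the result against Definition \ref{Lfct} and the definition of $\mathscr{L}_{f_\kappa,\xi_\kappa}$.

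For (i) I would apply $\kappa$ to the defining interpolation formula of $\mu_{\eta_\I,\mathfrak{a}}$. By construction $\kappa(\mathcal{F}_\I(T_x))=\mathcal{F}_\kappa(T_x)$, which is exactly the $T_x$-expansion $f_\kappa(T_x)$ by the very definition of $f_\kappa$; since $\kappa$ is a ring homomorphism it commutes with the $p$-depletion operator, which merely deletes the coefficients $a_n$ with $p\mid n$ by \cite[Proposition 4.17]{Brooks}, and with the substitution $T_x+1\mapsto(T_x+1)^{\mathrm{N}(\mathfrak{a})^{-1}\sqrt{-D_K}^{-1}}$. Comparing with \eqref{def-p-adic-L-function} and using $\mathrm{N}(\mathfrak{a}^{-1})=\mathrm{N}(\mathfrak{a})^{-1}$, this yields $\kappa(\mu_{\eta_\I,\mathfrak{a}})=\mu_{f_\kappa,\mathfrak{a}}$, so that the big measure genuinely interpolates the classical ones attached to the normalized Jacquet--Langlands lift $f_\kappa$.

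For (ii) I would compute $\kappa(\boldsymbol{\chi}^{-1}\boldsymbol{\xi}(\mathfrak{a}))$ using the explicit formulas \eqref{chi_k} and \eqref{hatxi} together with the description of $\vartheta_\kappa$. Here $\boldsymbol{\xi}$ specializes to the $p$-adic avatar $\hat\xi_\kappa$ of the anticyclotomic Hecke character $\xi_\kappa$ of infinity type $(k/2,-k/2)$, while $\boldsymbol{\chi}^{-1}$ is designed precisely to convert the weight-two normalization $\mathrm{N}(\mathfrak{a})^{-1}$ of Definition \ref{Lfct} into the weight-$k$ normalization $\mathrm{N}(\mathfrak{a})^{-k/2}$ appearing in $\mathscr{L}_{f_\kappa,\xi_\kappa}$. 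Because $\mathrm{N}(\mathfrak{a})=c^{-1}\sharp(\mathcal{O}_c/\mathfrak{a})$ carries the extra factor $c^{-1}$, passing through $\boldsymbol{\chi}$ and raising to the relevant weight exponent leaves exactly the constant $\vartheta_\kappa^{-1}(c)$, which is the source of that term in the statement. After this, each term of $\kappa\circ\mathscr{L}_{\I,\boldsymbol\xi}$ agrees with the corresponding term of $\vartheta_\kappa^{-1}(c)\,\mathscr{L}_{f_\kappa,\xi_\kappa}$ except for the local factor $\xi_{\kappa,\mathfrak{p}}(u)$ that sits \emph{inside} the integral of the latter but has no visible counterpart in the big $L$-function.

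The reconciliation of this internal factor $\xi_{\kappa,\mathfrak{p}}(u)$ is the heart of the argument and is where I expect the main obstacle to lie. Writing $\varphi\mid[\mathfrak{a}](u)=\varphi(\mathrm{rec}_K(a)\mathrm{rec}_{K,\mathfrak{p}}(u))$ and viewing $\hat\xi_\kappa$ as a character of $\Gamma_\infty$, one has $\hat\xi_\kappa(\mathrm{rec}_K(a)\mathrm{rec}_{K,\mathfrak{p}}(u))=\xi_\kappa(\mathfrak{a})\,\xi_{\kappa,\mathfrak{p}}(u)$, so $\mathscr{L}_{f_\kappa,\xi_\kappa}$ is really the full $\hat\xi_\kappa$-twist of an untwisted measure, with the prime-to-$p$ part recorded by $\xi_\kappa(\mathfrak{a})$ and the $\mathfrak{p}$-part by $\xi_{\kappa,\mathfrak{p}}(u)$. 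The constant $\boldsymbol{\xi}(\mathfrak{a})$ in Definition \ref{Lfct} supplies only the prime-to-$p$ part, so I must show that the $\mathfrak{p}$-part is produced by the interpolation (weight) variable itself: the $\Lambda_\mathcal{O}=\mathcal{O}[[1+p\Z_p]]$-structure carried by the coefficients of $\mathcal{F}_\I$, recorded through the twists $g_m(\varepsilon)$ of \S\ref{sec:measures} and the map \eqref{STexp}, must match the action of $\mathrm{rec}_{K,\mathfrak{p}}(u)$, $u\in\Z_p^\times\subseteq K_\mathfrak{p}^\times$, on the Serre--Tate parameter of $x(\mathfrak{a})$. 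I would make this precise by identifying the $1+p\Z_p$-direction of $\I$ — which enters both $\boldsymbol{\xi}$ via its group-like component and $\kappa$ via its weight — with the local cyclotomic direction of $\Gamma_\infty$, and then verifying that transferring this group-like contribution into the $\Gamma_\infty$-variable exactly reconstitutes the $\mathfrak{p}$-factor $\xi_{\kappa,\mathfrak{p}}(u)$ inside the integral. For $k>2$ one may additionally reduce to weight-two newforms via the $p$-stabilization comparison of Lemma \ref{lemma5.1}, so that the final local check is carried out on the cotangent spaces where $\mathcal{F}_\I$ was built. Summing the matched terms over $\mathfrak{a}\in\Pic(\mathcal{O}_c)$ then gives the asserted equality of measures.
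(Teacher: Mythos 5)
Your steps (i) and (ii) reproduce the paper's actual proof: applying $\kappa$ to the interpolation formula defining $\mu_{\eta_\I,\mathfrak{a}}$ (the paper justifies this with Mahler expansions) gives $\kappa\left(\int_{\Z_p^\times}\varphi\,d\mu_{\eta_\I,\mathfrak{a}}\right)=\int_{\Z_p^\times}(\kappa\circ\varphi)\,d\mu_{f_\kappa,\mathfrak{a}}$, and the character computation for representatives $\mathfrak{a}$ prime to $\mathfrak{p}\bar{\mathfrak{p}}$, namely $\hat{\chi}_\kappa^{-1}(\mathfrak{a})=\mathbf{N}_K(a)^{k/2-1}=\mathrm{N}(\mathfrak{a})^{-k/2+1}c^{-k/2+1}$ from \eqref{chi_k}, converts the weight-two normalization $\mathrm{N}(\mathfrak{a})^{-1}$ of Definition \ref{Lfct} into the weight-$k$ normalization $\mathrm{N}(\mathfrak{a})^{-k/2}$ and yields the constant $\vartheta_\kappa^{-1}(c)=c^{-k/2+1}$. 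Up to this point your argument is sound and matches the paper term by term.

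The genuine gap is precisely at what you call the heart of the argument, and it stems from a misdiagnosis: the factor $\xi_{\kappa,\mathfrak{p}}(u)$ requires no reconciliation with the weight variable at all. In the setting of the theorem, $\kappa$ has signature $(k,\epsilon_{\mathrm{triv}})$ and $\lambda$ is unramified at $p$ by hypothesis (\S\ref{sec-famHecke}); by \eqref{hatxi} the specialization $\hat{\xi}_\kappa$ is then the $p$-adic avatar of the algebraic Hecke character $\xi_\kappa=(\lambda\bar{\lambda}^{-1})^{k/2}$, which is \emph{unramified at $p$}, so its local component $\xi_{\kappa,\mathfrak{p}}$ is trivial on $\Z_p^\times$ and the factor sitting inside the integral defining $\mathscr{L}_{f_\kappa,\xi_\kappa}$ is identically $1$. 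That one-line observation ("since $\xi_\kappa$ is unramified at $p$") is how the paper closes the comparison, after which evaluating on characteristic functions $\mathbf{1}_U$ of compact opens gives the equality of measures. Your proposed substitute --- identifying the $1+p\Z_p$-direction of $\I$ with the local cyclotomic direction of $\Gamma_\infty$ and "transferring" a group-like contribution into the $\Gamma_\infty$-variable to reconstitute $\xi_{\kappa,\mathfrak{p}}(u)$ --- is left entirely as a sketch and is structurally off: in this construction the weight variable of $\I$ and the anticyclotomic variable $\Gamma_\infty$ are independent, nothing is transferred between them under $\kappa$, and no such mechanism could manufacture a factor that is in fact constant equal to $1$. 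A smaller inaccuracy: Lemma \ref{lemma5.1} does not "reduce to weight-two newforms"; it compares the weight-$k$ $p$-stabilization with its weight-$k$ newform, and it is used only in the corollary relating $\mathscr{L}_{\I,\boldsymbol{\xi}}(\kappa,\hat\varphi)$ to classical $L$-values, not in the proof of the theorem itself.
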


\begin{proof} 
For any continuous $\varphi\colon \Z_p \to \widetilde{\I}$, using its Mahler expansion, we obtain for $\mathfrak{a} \in \Pic(\mathcal{O}_c)$:
\[
\kappa \left(\int_{\Z_p^\times} \varphi(u) d{\mu_{\eta_\I,\mathfrak{a}}}(u)\right) = \int_{\Z_p^\times}(\kappa  \circ \varphi)(u) d{\mu_{f_\kappa ,\mathfrak{a}}}(u).
\]
Recall that, for an ideal $\mathfrak{a}\subseteq\mathcal{O}_c$, we write $\mathrm{N}(\mathfrak{a})=c^{-1}\sharp(\mathcal{O}_c/\mathfrak{a})$; if $\mathfrak{a}=a\widehat{\mathcal{O}}_c\cap K$ for an element $a\in\widehat{\mathcal{O}}_K$,
we have 
$\mathrm{N}(\mathfrak{a})=c^{-1}\cdot\mathbf{N}^{-1}_K(a)$.  
Choose representatives $\mathfrak{a}$ of  $\Pic(\mathcal{O}_c)$ such that 
$\mathfrak{p}\nmid \mathfrak{a}$
and $\bar{\mathfrak{p}}\nmid \mathfrak{a}$; by \eqref{chi_k} we then have  
\[
\hat{\chi}^{-1}_\kappa (\mathfrak{a})= \hat{\chi}^{-1}_\kappa(a) = \mathbf{N}_K(a)^{k /2-1} = \mathrm{N}(\mathfrak{a})^{-k /2+1} c^{-k/2+1}
\]
and $\xi_\kappa(\mathfrak{a})=\hat{\xi}_\kappa(\mathfrak{a})$. 
Therefore, since $\xi_\kappa$ is unramified at $p$, we have 
	\begin{equation*}
	\begin{split}
\mathscr{L}_{\I,\boldsymbol\xi}(\kappa,\varphi) &= \sum_{\mathfrak{a} \in \Pic(\mathcal O_{c})}
({\hat{\chi}}_\kappa ^{-1}
{\hat{\xi}}_\kappa )(\mathfrak{a})
\mathrm{N}(\mathfrak{a})^{-1}
\int_{\Z_p^{\times}}
(\kappa\circ\varphi \big|[\mathfrak{a}])(u)d\mu_{\eta_\I,\mathfrak{a}}(u)\\
&= \sum_{\mathfrak{a} \in \Pic(\mathcal{O}_{c})}c^{-k/2+1} \mathrm{N}(\mathfrak{a})^{-\frac{k}{2}}{\xi}_\kappa (\mathfrak{a}) \int_{\Z_p^{\times}} \xi_{\kappa ,\mathfrak{p}}(u)(\kappa  \circ \varphi \big|[\mathfrak{a}])(u)d\mu_{f_\kappa ,\mathfrak{a}}(u)
\\
&=c^{-k/2+1} \mathscr{L}_{f_\kappa ,\xi_\kappa}(\kappa \circ \varphi).
\end{split}
\end{equation*}
In particular we have 
$\mathscr{L}_{\I,\boldsymbol\xi}(\kappa ,\mathbf{1}_U) = c^{-k /2+1} \mathscr{L}
_{f_\kappa ,\xi_\kappa }(\mathbf{1}_U)$
for any open compact subset $U$ of the Galois group  $\Gamma_\infty$, where $\mathbf{1}_U$ is the characteristic function of $U$. We conclude the proof via the equivalence between measures on $\Gamma_\infty$ and additive functions on the set of open compact subsets of $\Gamma_\infty$.
\end{proof}

We close by stating the interpolation properties satisfied by this $p$-adic $L$-function. Let $f_\kappa \in S_k(\Gamma_1,\mathcal{O})$ have trivial character as before. Then, since $f_\kappa $ is ordinary, either $f_\kappa $ has weight $2$ and is a newform of level $Np$, 
or there is a newform $f_\kappa ^\sharp\in S_k(\Gamma_0,K)$ whose ordinary $p$-stabilization is $f_\kappa $. Suppose we are in the second case.

\begin{corollary}
Let $\kappa\colon\I\rightarrow\mathcal{O}_\kappa$ be an arithmetic morphism of weight $k \equiv k_0 \mod 2(p-1)$. If $\hat\varphi\colon\Gamma_\infty\rightarrow\overline\Q_p^\times$ corresponds via $\mathrm{rec}_K$ to the $p$-adic avatar of an anticyclotomic Hecke character $\varphi$ of infinity type $(n,-n)$ 
for some integer $n\geq 0$, 
then
\[\left(\mathscr{L}_{\I,\boldsymbol\xi}(\kappa,\hat\varphi)\right)^2=C\cdot L(f_\kappa ^\sharp,\xi_\kappa\varphi,k /2)
\] where 
$C$ is a non-zero constant (depending on $\kappa$, $\xi_\kappa $, $\varphi$ and $K$, and, up to a $p$-adic unit, on the choice of $\eta_\I$).
\end{corollary}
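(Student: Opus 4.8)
The plan is to combine the comparison already proved in Theorem \ref{mainthh} with the interpolation formula (a $p$-adic Waldspurger--Gross formula) satisfied by the quaternionic $p$-adic $L$-function of a \emph{fixed} modular form, as established in \cite{Magrone, Mori1, Brooks}. First I would evaluate the identity of Theorem \ref{mainthh}, namely $\mathscr{L}_{\I,\boldsymbol\xi}^{\mathrm{an}}(\kappa)=\vartheta_\kappa^{-1}(c)\,\mathscr{L}_{f_\kappa,\xi_\kappa}$, at the continuous character $\hat\varphi$, obtaining
\[\mathscr{L}_{\I,\boldsymbol\xi}(\kappa,\hat\varphi)=\vartheta_\kappa^{-1}(c)\,\mathscr{L}_{f_\kappa,\xi_\kappa}(\hat\varphi).\]
Since we are in the second case, $f_\kappa\in S_k^\ST(\Gamma_1,\mathcal{O})$ is the ordinary $p$-stabilization of the newform $f_\kappa^\sharp\in S_k(\Gamma_0,K)$, so Lemma \ref{lemma5.1} gives $\mathscr{L}_{f_\kappa,\xi_\kappa}=\mathscr{L}_{f_\kappa^\sharp,\xi_\kappa}$ and therefore
\[\mathscr{L}_{\I,\boldsymbol\xi}(\kappa,\hat\varphi)=\vartheta_\kappa^{-1}(c)\,\mathscr{L}_{f_\kappa^\sharp,\xi_\kappa}(\hat\varphi).\]
Squaring, the factor $\vartheta_\kappa^{-1}(c)^2$ is a nonzero constant to be absorbed into $C$.

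Next I would invoke the interpolation property of $\mathscr{L}_{f_\kappa^\sharp,\xi_\kappa}$. By the computation of \S\ref{characters} the character $\xi_\kappa$ has infinity type $(k/2,-k/2)$, and $\varphi$ has infinity type $(n,-n)$ with $n\geq 0$; hence $\xi_\kappa\varphi$ is anticyclotomic of infinity type $(k/2+n,-k/2-n)$, which lies in the range of interpolation underlying the construction in \cite[\S4.6]{Magrone} (see also \cite{Brooks, Mori1}). The $p$-adic $L$-function $\mathscr{L}_{f_\kappa^\sharp,\xi_\kappa}$ is built, via the Serre--Tate expansions and the $p$-depletion of $f_\kappa^\sharp$ at the CM points $x(\mathfrak{a})$, as a normalized toric period; the main theorem of \emph{loc. cit.} is a $p$-adic avatar of the Waldspurger--Gross formula expressing the \emph{square} of this period as a central critical value. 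I would quote it in the form
\[\left(\mathscr{L}_{f_\kappa^\sharp,\xi_\kappa}(\hat\varphi)\right)^2=C'\cdot L(f_\kappa^\sharp,\xi_\kappa\varphi,k/2),\]
where $C'$ gathers the explicit local interpolation factors and the relevant CM period. Here $C'\neq 0$ because $\hat\varphi$ lies in the interpolation range (so the local zeta factors do not vanish) and because the running hypotheses on $K$, $M$ and $D$ (in particular $D$ being a squarefree product of an \emph{even} number of inert primes) put us in the definite setting where the toric period is the correct one.

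Combining the two displays, I would set $C=\vartheta_\kappa^{-1}(c)^2\,C'$, a nonzero constant depending on $\kappa$, $\xi_\kappa$, $\varphi$ and $K$, and, since $\mathscr{L}_{\I,\boldsymbol\xi}$ (hence $\mathcal{F}_\I$ and the measures $\mu_{\eta_\I,\mathfrak{a}}$) is well defined only up to $\I^\times$, depending on $\eta_\I$ up to a $p$-adic unit, exactly as asserted. The main obstacle is matching normalizations with the cited references: one must check that $\mu_{\eta_\I,\mathfrak{a}}$ specializes under $\kappa$ to the measure $\mu_{f_\kappa^\sharp,\mathfrak{a}}$ defining $\mathscr{L}_{f_\kappa^\sharp,\xi_\kappa}$ --- this follows from the Mahler-expansion compatibility $\kappa\bigl(\int_{\Z_p}\varphi\,d\mu_{\eta_\I,\mathfrak{a}}\bigr)=\int_{\Z_p}(\kappa\circ\varphi)\,d\mu_{f_\kappa,\mathfrak{a}}$ used in the proof of Theorem \ref{mainthh}, together with Lemma \ref{lemma5.1} --- and that the infinity-type conventions in the Waldspurger formula of \emph{loc. cit.} align with the central point $s=k/2$ for the twist $\xi_\kappa\varphi$.
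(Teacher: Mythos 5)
Your proposal is correct and follows essentially the same route as the paper's own proof: specialize Theorem \ref{mainthh} at $\kappa$, use Lemma \ref{lemma5.1} to replace $\mathscr{L}_{f_\kappa,\xi_\kappa}$ by $\mathscr{L}_{f_\kappa^\sharp,\xi_\kappa}$ via the equality of $p$-depletions, and then quote the Waldspurger-type interpolation formula of \cite[Theorem 4.6]{Magrone} (see also \cite[8.11]{Brooks}, \cite[Theorem 0.1]{Mori1}) for $\bigl(\mathscr{L}_{f_\kappa^\sharp,\xi_\kappa}(\hat\varphi)\bigr)^2$, absorbing $\vartheta_\kappa^{-1}(c)^2$ and the choice of $\eta_\I$ into $C$. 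One peripheral slip: since $D$ is a squarefree product of an \emph{even} number of primes, $B$ is \emph{indefinite} (which is why Shimura curves exist at all), so the cited formula lives in the indefinite, not the definite, setting.
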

\begin{proof}
In view of Lemma \ref{lemma5.1}, the Serre--Tate expansions of $f_\kappa^{[p]}=(f^\sharp_\kappa)^{[p]}$ are the same, putting us in the conditions of Theorem \ref{mainthh}. The relation between the analytic $L$-function $\mathscr{L}_{f_\kappa ,\xi_\kappa }$ and the algebraic variant in the right-hand side, and the nature of the constant $C$, is discussed in \cite[Theorem 4.6]{Magrone}; when $\varphi$ is unramified, see \cite[8.11]{Brooks} and \cite[Theorem 0.1]{Mori1} for an explicit expression of $C$.
\end{proof}

\bibliographystyle{amsalpha}
\bibliography{references}
\end{document}